\documentclass[11pt]{article}

\usepackage[T1]{fontenc}
\usepackage[utf8]{inputenc}
\usepackage{lmodern}

\usepackage{parskip}
\usepackage{amsmath, amssymb, amsthm}
\usepackage{mathrsfs}
\usepackage{subcaption}
\usepackage{booktabs}
\usepackage{tabularx}
\newcolumntype{Y}{>{\centering\arraybackslash}X}

\usepackage[numbers]{natbib}
\let\parencite\citep

\usepackage{geometry}
\usepackage{thmtools}

\declaretheoremstyle[
  headfont=\bfseries\itshape,
  bodyfont=\normalfont,
  spaceabove=16pt,
  spacebelow=14pt,
  headpunct={.},
  postheadspace=0.5em,
  mdframed={ hidealllines=true, innerleftmargin=10pt, innerrightmargin=10pt }
]{elegant}

\declaretheoremstyle[
  headfont=\bfseries\itshape,
  bodyfont=\normalfont,
  spaceabove=12pt,
  spacebelow=12pt,
  headpunct={.},
  postheadspace=0.5em,
  mdframed={ hidealllines=true, innerleftmargin=10pt, innerrightmargin=10pt }
]{definitionstyle}

\declaretheoremstyle[
  headfont=\itshape,
  bodyfont=\itshape,
  spaceabove=10pt,
  spacebelow=10pt,
  headpunct={.},
  postheadspace=0.5em,
]{remarkstyle}

\declaretheorem[style=elegant, numberwithin=section, name=Theorem]{theorem}
\declaretheorem[style=elegant, sibling=theorem, name=Proposition]{proposition}
\declaretheorem[style=elegant, sibling=theorem, name=Lemma]{lemma}
\declaretheorem[style=elegant, sibling=theorem, name=Corollary]{corollary}
\declaretheorem[style=elegant, sibling=theorem, name=Assumption]{assumption}
\declaretheorem[style=elegant, sibling=theorem, name=Principle]{principle}
\declaretheorem[style=definitionstyle, sibling=theorem, name=Definition]{definition}
\declaretheorem[style=definitionstyle, sibling=theorem, name=Example]{example}
\declaretheorem[style=remarkstyle, sibling=theorem, name=Remark]{remark}

\usepackage{hyperref}
\usepackage{cleveref}

\usepackage{mathtools}
\DeclareMathOperator*{\argmin}{argmin}

\usepackage{tikz}
\usetikzlibrary{calc, shapes, arrows, decorations.pathmorphing}

\usepackage{stmaryrd}

\title{A Wasserstein Geometric Framework for Hebbian Plasticity}

\author{Ulrich Tan \\ORCID: https://orcid.org/0009-0001-2907-501X}

\date{April 11, 2025}

\begin{document}

\maketitle

\begin{abstract}
  We introduce the \emph{Tan--HWG framework} (Hebbian--Wasserstein--Geometry), a geometric theory of Hebbian plasticity in which memory states are modeled as probability measures evolving through Wasserstein minimizing movements. Hebbian learning rules are formalized as \emph{Hebbian energies} satisfying a sequential stability condition, ensuring well-posed fiberwise JKO updates, optimal-transport realizations, and an energy descent inequality.

  This variational structure induces a fundamental separation between \emph{internal} and \emph{observable} dynamics. Internal memory states evolve along Wasserstein geodesics in a latent curved space, while observable quantities---such as effective synaptic weights---arise through geometric projection maps into external spaces. Simplicial projections recover classical affine schemes (including exponential moving averages and mirror descent), while revealing synaptic competition and pruning as geometric consequences of mass redistribution. Hilbertian projections provide a geometric account of phase alignment and multi-scale coherence.

  Classical neural networks appear as flat projections of this curved dynamics, while the framework naturally accommodates richer distributional representations, including structural weights and embedding memories, and their spectral extensions in complex internal spaces.

  Under mild Lipschitz regularity assumptions, including a quasi-stationary ``sleep-mode'' regime, we establish the existence of continuous-time limit curves. This yields a variational formulation of memory consolidation as a perturbed Wasserstein gradient flow. The framework thus provides a unified geometric foundation for synaptic plasticity, representation dynamics, and context-dependent computation.
\end{abstract}

\section*{Introduction}

Hebbian plasticity~\parencite{hebb} remains one of the central principles for understanding how neural systems adapt to experience. Classical formulations describe how synaptic strengths change as a function of co-activation, while modern approaches increasingly seek geometric or variational interpretations of synaptic dynamics~\parencite{gerstner2014neuronal, dayanabbott, sutton2018reinforcement}. Yet, despite the diversity of existing models, a unified geometric formulation of Hebbian learning has remained elusive.

Optimal transport and Wasserstein geometry provide a natural language for describing the evolution of probability distributions~\parencite{Villani2003, santambrogio}. The JKO scheme~\parencite{jko1998}, originally introduced to discretize gradient flows in Wasserstein space, offers a principled variational framework for modeling distributional dynamics. This makes Wasserstein geometry a compelling candidate for describing synaptic plasticity, although its relevance to Hebbian learning has not been systematically explored.

This work introduces the \emph{Tan--HWG framework} (Hebbian--Wasserstein--Geometry), a geometric theory of Hebbian plasticity in which memory states are represented as probability measures evolving through fiberwise Wasserstein proximal steps. Plasticity unfolds in a latent curved space, while a global contextual signal encodes the functional state of the system and drives the local updates. This formulation reveals a structural separation between \emph{internal} dynamics in Wasserstein space and \emph{observable} dynamics obtained through geometric projection maps.

Within this framework, classical neural networks appear as flat projections of a richer distributional geometry. The probabilistic representation naturally induces two complementary levels of structure: \emph{structural weights}, encoding the allocation of synaptic influence, and \emph{embedding memories}, encoding internal synaptic states. In complex internal spaces, these embeddings give rise to \emph{spectral memories} that capture phase information. These phenomena emerge from the geometry itself rather than from additional modeling assumptions.

The main contributions of this work are as follows:
\begin{enumerate}
  \item \emph{A geometric formulation of Hebbian plasticity.}
  We identify a broad class of Hebbian energies for which the minimizing-movement scheme is well posed, admits fiberwise optimal-transport realizations, and satisfies an energy descent inequality.

  \item \emph{A dual internal/observable dynamics.}
  Internal memory states evolve along geodesics in the Wasserstein space, while observable quantities arise through projections onto metric external spaces (e.g.\ simplex or Hilbert spaces). This duality provides geometric interpretations of synaptic competition, pruning, multi-scale coherence, neuronal assemblies, and phase alignment.

  \item \emph{A variational account of memory consolidation.}
  Under mild Lipschitz regularity assumptions, including a quasi-stationary ``sleep-mode'' regime, we establish the existence of continuous-time limit curves, yielding a perturbed Wasserstein gradient-flow formulation of memory consolidation.
\end{enumerate}

Overall, the framework provides a unified geometric foundation for synaptic plasticity, representation dynamics, and context-dependent computation, revealing how observable learning rules arise as projections of an underlying Wasserstein geometry.

\section{Mathematical foundations}\label{sec:foundations}

\subsection{Internal state space}

Let $(X,\mu)$ be a probability space, or more generally a finite-measure space, representing the set of (postsynaptic) neurons or readout units, $\mu$ serving as a reference measure on $X$.
Let $(Y,d_Y)$ be a complete, separable, geodesic metric space (i.e.\ a geodesic Polish space), the \emph{internal state space}, representing synaptic internal states.
This assumption ensures that $\mathcal P_2(Y)$ is itself a geodesic Polish space, which will be crucial for the variational dynamics introduced later.

\begin{example}[Compatible internal state spaces]
  Typical admissible examples of $(Y,d_Y)$ include Euclidean spaces $\mathbb R^d$, and complete geodesic metric graphs, namely graphs whose edges are finite-length closed segments (ensuring geodesicity) and which have no open ends, missing vertices, or incomplete rays (ensuring completeness). More generally, CAT(0) spaces~\parencite{BridsonHaefliger1999}, which are also Polish, are admissible internal state space.
\end{example}

\subsection{Memory fields}

For each $x\in X$, the local memory state is a probability measure $\rho_x \in \mathcal{P}_2(Y)$, and we denote by
\[
\mathcal{X} := \big\{ \rho: x\mapsto\rho_x\in\mathcal{P}_2(Y) \text{ Borel measurable} \big\}
\]
the space of memory fields, $\mathcal{P}_2(Y)$ being naturally equipped with Wasserstein--2 metric and its induced Borel $\sigma$-algebra.

\begin{definition}[$\mu$-compatible fields]
  A field $\rho\in\mathcal X$ is said to be \emph{$\mu$-compatible} if
  \[
  \int_X\int_Y d_Y(y,y_0)^2\,\mathrm d\rho_x(y)\,\mathrm d\mu(x)<+\infty
  \]
  for some (hence any) $y_0\in Y$.
  We denote by $\mathcal X_\mu(X, Y)$, or simply $\mathcal X_\mu$, the set of all such $\mu$-compatible fields.
\end{definition}

By definition of $\mathcal P_2(Y)$, for $\rho\in\mathcal X$, for each $x$, we know that the second moment
\[
\int_Y d_Y(y,y_0)^2\,\mathrm d\rho_x(y)<+\infty
\]
is finite. The $\mu$-compatibility for a memory field hence states that the $\mu$-average of all fiber moments of the fields is finite.

Measure-compatibility ensures tightness and compactness in the product space $\mathcal X$ with respect to the base measure $\mu$. This property will be crucial later in the paper, in particular for the compactness of the discrete trajectories and the passage to the continuous-time limit in the minimizing-movement scheme.

\begin{remark}[Geometry induced by the memory field]
  Although not used in the developments below, the memory field $\rho=(\rho_x)_{x\in X}$ canonically induces a pseudo‑metric on $X$ via
  \[
  d_\rho(x,y)=W_2(\rho_x,\rho_y),
  \]
  endowing $X$ with a data‑dependent geometry. This viewpoint highlights that the similarity structure between units is not fixed a priori but emerges from their local memory states. One may further equip $(X,d_\rho)$ with its Hausdorff measure $\mu_\rho$, yielding a complete, separable induced metric‑measure structure $(X,d_\rho,\mu_\rho)$. While this evolving geometry will not be exploited in the sequel, it provides a useful conceptual interpretation of the framework as defining a dynamic cognitive geometry shaped by the memory field.
\end{remark}

\subsection{Example: synaptic relative weight representation}
\label{ex:example0}

Let $X$ denote a set of neurons. We consider two finite subsets:
\[
X_N := \{x_1,\dots,x_N\} \subset X,
\qquad
X'_M := \{x'_1,\dots,x'_M\} \subset X,
\]
representing two populations of neurons. We assume that each neuron in $X_N$ is connected to each neuron in $X'_M$, as in a simple two-layer feed-forward architecture (if $X_N \cap X'_M \neq \emptyset$, the network no longer has a feed-forward topology, but the construction below still applies).

To represent the space of possible synaptic configurations from $X'_M$ to $X_N$, we introduce a complete geodesic metric graph $Y$ with the following structure:
\begin{itemize}
  \item a distinguished subset of leaves
  \[
  Y_M := \{y_1,\dots,y_M\} \subset Y
  \]
  corresponding to the neurons $X'_M$;
  \item for each $y_i$, an edge connecting $y_i$ to a central hub $y^o \in Y$, so that $Y$ is a star-shaped tree with center $y^o \in Y$ (not associated with any neuron) and leaves $Y_M$;
  \item the metric on $Y$ is the natural path metric induced by the tree structure.
\end{itemize}

\begin{figure}[!hbtp]
  \centering
  \begin{tikzpicture}[scale=1.2, every node/.style={font=\small}]
    \node[circle, draw, fill=gray!20, inner sep=2pt] (c) at (0,0) {$y^o$};
    \node[circle, draw, fill=blue!15, inner sep=2pt] (y1) at (90:2) {$y_1$};
    \node[circle, draw, fill=blue!15, inner sep=2pt] (y2) at (30:2) {$y_2$};
    \node[circle, draw, fill=blue!15, inner sep=2pt] (y3) at (-30:2) {$y_3$};
    \node[circle, draw, fill=blue!15, inner sep=2pt] (y4) at (-90:2) {$y_4$};
    \node[circle, draw, fill=blue!15, inner sep=2pt] (y5) at (210:2) {$y_5$};
    \node[circle, draw, fill=blue!15, inner sep=2pt] (y6) at (150:2) {$y_6$};
    \foreach \i in {1,2,3,4,5,6}
      \draw[thick] (c) -- (y\i);
    \node at (0,-2.6) {$Y$ is a star-shaped geodesic tree with center $y^o$ and leaves $Y_M=\{y_1,\dots,y_M\}$};
  \end{tikzpicture}
  \caption{Star-shaped metric tree $Y$ representing the synaptic targets $X'_M$.}
  \label{fig:star-tree}
\end{figure}

$X_N$ is naturally equipped with the normalized measure
\[
\mu = \frac{1}{N}\sum_{j=1}^N \delta_{x_j},
\]
and for each postsynaptic neuron $x_j \in X_N$, we represent its synaptic profile by a probability measure
\[
\rho_{x_j} = \sum_{i=1}^M p_{i,j}\,\delta_{y_i} \in P_2(Y),
\]
supported on the leaves $Y_M$, where $\rho_{x_j}(\{y_i\})=p_{i,j} \in [0,1]$ denotes the \emph{relative} synaptic strength from $x'_i$ (or $y_i$) to $x_j$, with the constraint
\[
\sum_{i=1}^M p_{i,j} = 1.
\]

This representation captures only the relative profile of synaptic weights, not their absolute magnitudes. In particular, two nontrivial positive measures are considered equivalent whenever they differ by a global scaling factor:
\[
\forall\, \nu,\nu' \in \mathcal M_+(Y),
\qquad
\nu \sim \nu'
\quad\Longleftrightarrow\quad
\exists\, c>0,\ \nu = c\,\nu'.
\]
Thus, the Wasserstein geometry acts on equivalence classes of synaptic weight profiles, reflecting the fact that only the relative distribution of synaptic strengths is modeled here.
This normalization is not intended as a biological assumption, but arises naturally from the Wasserstein formalism: the space $\mathcal P_2(Y)$ provides a geometrically well-structured setting in which synaptic configurations can evolve along continuous geodesics.

The Wasserstein distance $W_2(\rho_{x_j},\rho_{x_j}')$ quantifies the minimal transport cost required to transform one synaptic profile into another. Because $Y$ is a geodesic metric tree, this transport is uniquely defined and proceeds along the edges of the star-shaped graph. Intermediate points along such a Wasserstein geodesic between two profiles correspond to ``virtual'' synaptic configurations in which mass may temporarily reside along the edges of the tree or at the central hub $y^o$. These intermediate states do not correspond to actual neurons, but they arise naturally from the requirement that synaptic evolution be continuous in $\mathcal P_2(Y)$.

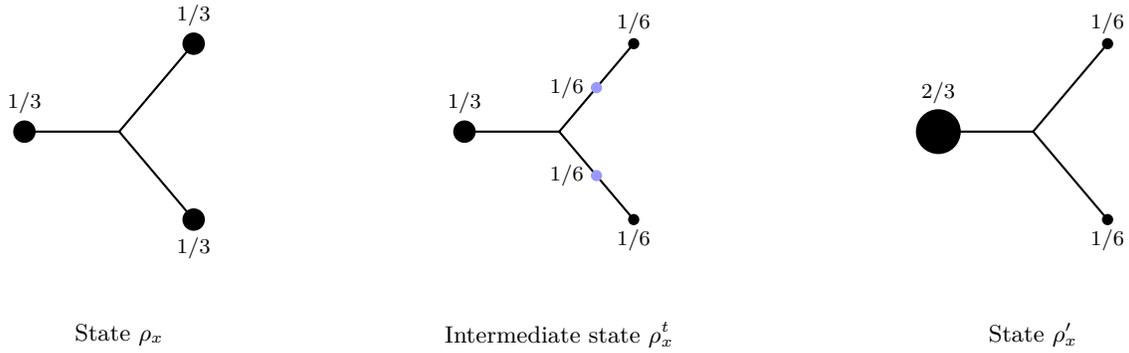
\begin{figure}[!hbtp]
  \centering
  \begin{tikzpicture}[scale=0.9,>=stealth]
    \begin{scope}[shift={(-9.5,0)}]
      \coordinate (C1) at (0,0);
      \coordinate (Y1a) at (1.1,1.3);
      \coordinate (Y1b) at (1.1,-1.3);
      \coordinate (Y1c) at (-1.4,0);
      \draw[thick] (C1)--(Y1a) (C1)--(Y1b) (C1)--(Y1c);
      \foreach \p in {Y1a,Y1b,Y1c} \node[circle,fill=black,inner sep=3pt] at (\p) {};
      \node at (Y1a) [above=3pt] {\scriptsize $1/3$};
      \node at (Y1b) [below=3pt] {\scriptsize $1/3$};
      \node at (Y1c) [above=3pt] {\scriptsize $1/3$};
      \node at (0,-3) {\footnotesize State $\rho_x$};
    \end{scope}
    \begin{scope}[shift={(-3,0)}]
      \coordinate (C2) at (0,0);
      \coordinate (Y2a) at (1.1,1.3);
      \coordinate (Y2b) at (1.1,-1.3);
      \coordinate (Y2c) at (-1.4,0);
      \draw[thick] (C2)--(Y2a) (C2)--(Y2b) (C2)--(Y2c);
      \foreach \p in {Y2a,Y2b} \node[circle,fill=black,inner sep=1.5pt] at (\p) {};
      \node[circle,fill=black,inner sep=3pt] at (Y2c) {};
      \node at (Y2a) [above] {\scriptsize $1/6$};
      \node at (Y2b) [below] {\scriptsize $1/6$};
      \node at (Y2c) [above=3pt] {\scriptsize $1/3$};
      \node[circle,fill=blue!40,inner sep=1.5pt] at ($(C2)!0.5!(Y2a)$) {};
      \node at ($(C2)!0.5!(Y2a)$) [left=1pt] {\scriptsize $1/6$};
      \node[circle,fill=blue!40,inner sep=1.5pt] at ($(C2)!0.5!(Y2b)$) {};
      \node at ($(C2)!0.5!(Y2b)$) [left=1pt] {\scriptsize $1/6$};
      \node at (0,-3) {\footnotesize Intermediate state $\rho^t_x$};
    \end{scope}
    \begin{scope}[shift={(4,0)}]
      \coordinate (C3) at (0,0);
      \coordinate (Y3a) at (1.1,1.3);
      \coordinate (Y3b) at (1.1,-1.3);
      \coordinate (Y3c) at (-1.4,0);
      \draw[thick] (C3)--(Y3a) (C3)--(Y3b) (C3)--(Y3c);
      \foreach \p in {Y3a,Y3b} \node[circle,fill=black,inner sep=1.5pt] at (\p) {};
      \node[circle,fill=black,inner sep=6pt] at (Y3c) {};
      \node at (Y3a) [above] {\scriptsize $1/6$};
      \node at (Y3b) [below] {\scriptsize $1/6$};
      \node at (Y3c) [above=7pt] {\scriptsize $2/3$};
      \node at (0,-3) {\footnotesize State $\rho'_x$};
    \end{scope}
  \end{tikzpicture}
  \caption{Illustration of the Wasserstein transport between two discrete synaptic profiles. Mass is moved from $\rho_x$ (left) to $\rho'_x$ (right) along the edges of the graph. The intermediate configuration $\rho_x^t$ (center) shows partial transport along each branch (blue points), representing the geodesic interpolation in the Wasserstein space, with $t=0.25$.}
\end{figure}

Thus, the representation
\[
x_j \longmapsto \rho_{x_j} \in \mathcal P_2(Y)
\]
provides a geometrically coherent model of synaptic relative weights, in which plasticity can be described as a continuous trajectory in Wasserstein space, while biological observations correspond to projections onto the discrete set of leaves $Y_M$.
In particular, we can consider that along the geodesic $(\rho^t_{x_j})_{t\in[0,1]}$ between $\rho^0_{x_j}=\rho_{x_j}$ and $\rho^1_{x_j}=\rho_{x_j}'$, the network is in a mixed state: with probability $(1-t)$ one observes the original profile $\rho_{x_j}$, and with probability $t$ one observes the updated profile $\rho_{x_j}'$. Averaging over multiple observations yields
\[
\mathbb{E}\!\left[\rho^t_{x_j}\right]
= (1-t)\,\rho_{x_j} + t\,\rho_{x_j}'.
\]
Equivalently, since each distribution is supported on the leaves $Y_M$, the expected weights satisfy
\[
\mathbb{E}\!\left[p^t_{i,j}\right]
= (1-t)\,p_{i,j} + t\,p'_{i,j}.
\]
Thus, the expected synaptic profile follows a straight line segment in the $(M\!-\!1)$-dimensional simplex between the points
\[
p_j = (p_{1,j},\dots,p_{M,j})
\qquad\text{and}\qquad
p'_j = (p'_{1,j},\dots,p'_{M,j}).
\]
Remarkably, this affine evolution is entirely independent of the metric structure of the underlying graph $Y$: although the internal Wasserstein geodesic depends on the lengths of the edges, the observable mixed state produces a universal barycentric interpolation in the simplex.

\subsection{Geometry on the space of memory fields}

We now introduce a natural geometry on the space of memory fields $\mathcal{X}$. This will allow us to treat the evolution $t \mapsto \rho(t)$ as a trajectory in a metric space.

Given two memory fields $\rho, \rho' \in \mathcal{X}_\mu$, we define their distance by
\[
\mathcal{W}(\rho,\rho')
:=\left( \int_X W_2^2(\rho_x,\rho'_x) \, \mathrm{d}\mu(x) \right)^{1/2}.
\]
This is the standard product Wasserstein metric on the bundle $\mathcal{P}_2(Y)^X$, weighted by the reference measure $\mu$ on $X$ (see \parencite{villani,ags}). The space $(\mathcal{X}_\mu,\mathcal{W})$ is complete and separable whenever $(Y,d_Y)$ is complete and separable. This equips $\mathcal{X}_\mu$ with a canonical metric structure in which memory fields can be compared, interpolated, and evolved.

\begin{remark}[Domain of the metric.]
  Although each fibre $\rho_x$ belongs to $\mathcal P_2(Y)$, the product Wasserstein metric
  \[
  \mathcal W(\rho,\sigma)^2
  = \int_X W_2^2(\rho_x,\sigma_x)\,\mathrm d\mu(x)
  \]
  is finite only when the $\mu$-average of the second moments is finite. For this reason, the natural domain of the metric $\mathcal W$ is the subspace
  \[
  \mathcal X_\mu
  := \Big\{ \rho\in\mathcal X :
  \int_X \int_Y d_Y(y,y_0)^2\,\mathrm d\rho_x(y)\,\mathrm d\mu(x) < \infty \Big\},
  \]
  on which $\mathcal W$ is a well-defined finite metric. Working in $\mathcal X_\mu$ ensures tightness, compactness of discrete trajectories, and stability of the Wasserstein geometry under limits.
\end{remark}

\begin{remark}[Cognitive interpretation]
  Geometrically, $\mathcal{W}(\rho,\rho')$ measures the total amount of transport required to transform the entire memory field $\rho$ into $\rho'$, aggregating the fiberwise Wasserstein costs across $X$. Two global states are close when, for most units $x \in X$, the corresponding internal distributions encode similar information. Thus $\mathcal{W}$ measures the dissimilarity between two ``memory states'' in a geometrically meaningful way.
\end{remark}

Moreover, $\mu$-compatibility of memory fields is actually stable under Wasserstein geometry. More precisely, we have the following proposition:

\begin{proposition}[Geodesic stability of $\mu$-compatible fields]
  \label{prop:mu-compatible-geodesic}
  Assume $\rho^0,\rho^1\in\mathcal X_\mu$ are $\mu$-compatible fields, i.e.
  \[
  \int_X\int_Y d_Y(y,y_0)^2\,\mathrm d\rho^i_x(y)\,\mathrm d\mu(x)<\infty,
  \qquad i=0,1,
  \]
  for some (hence any) $y_0\in Y$.
  Let $(\rho^t)_{t\in[0,1]}$ be a constant-speed geodesic between $\rho^0$ and $\rho^1$ in $(\mathcal X,\mathcal W)$.
  Then, for every $t\in[0,1]$, the field $\rho^t$ is $\mu$-compatible.

  In particular, the set of $\mu$-compatible fields $\mathcal X_\mu$ is geodesically convex in $(\mathcal X,\mathcal W)$.
\end{proposition}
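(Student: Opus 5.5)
The plan is to reduce $\mu$-compatibility to a statement about the $\mathcal W$-distance to a reference field, and then use the triangle inequality along the geodesic. Fix $y_0\in Y$ and let $\delta$ denote the constant field $x\mapsto\delta_{y_0}$. For any $\nu\in\mathcal P_2(Y)$, the only coupling of $\nu$ with $\delta_{y_0}$ is the product coupling. Hence
\[
W_2^2(\nu,\delta_{y_0})=\int_Y d_Y(y,y_0)^2\,\mathrm d\nu(y),
\]
and therefore
\[
\int_X\int_Y d_Y(y,y_0)^2\,\mathrm d\rho_x(y)\,\mathrm d\mu(x)=\mathcal W(\rho,\delta)^2 .
\]
The field $\delta$ is measurable and lies in $\mathcal X_\mu$, since $\mu$ is finite. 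So $\rho\in\mathcal X$ is $\mu$-compatible if and only if $\mathcal W(\rho,\delta)<\infty$. In particular, the hypothesis reads $\mathcal W(\rho^i,\delta)<\infty$ for $i=0,1$.

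Next I use the geodesic property. Since $(\rho^t)$ is a constant-speed geodesic in $(\mathcal X,\mathcal W)$, we have
\[
\mathcal W(\rho^0,\rho^t)=t\,\mathcal W(\rho^0,\rho^1).
\]
Moreover, $\mathcal W(\rho^0,\rho^1)\le \mathcal W(\rho^0,\delta)+\mathcal W(\delta,\rho^1)<\infty$. Here the triangle inequality for the extended quantity $\mathcal W$ on $\mathcal X$ follows from the fiberwise triangle inequality for $W_2$ combined with Minkowski's inequality in $L^2(\mu)$. Applying the triangle inequality once more gives
\[
\mathcal W(\rho^t,\delta)\le \mathcal W(\rho^t,\rho^0)+\mathcal W(\rho^0,\delta)\le t\,\mathcal W(\rho^0,\rho^1)+\mathcal W(\rho^0,\delta)<\infty,
\]
so $\rho^t$ is $\mu$-compatible. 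Squaring and using $(a+b)^2\le 2a^2+2b^2$ yields the quantitative moment bound
\[
\mathcal W(\rho^t,\delta)^2\le 2t^2\,\mathcal W(\rho^0,\rho^1)^2+2\,\mathcal W(\rho^0,\delta)^2 .
\]
Geodesic convexity of $\mathcal X_\mu$ then follows immediately, since every $\rho^t$ stays in $\mathcal X_\mu$.

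The main obstacle is foundational rather than computational: $\mathcal W$ is a priori only an extended pseudo-metric on all of $\mathcal X$. One must check that the triangle inequality holds there, with values in $[0,\infty]$, and that each $\rho^t$ is a Borel field, so that the integrals make sense. The triangle inequality is the Minkowski argument above. Measurability of $\rho^t$ is built into the assumption that $(\rho^t)$ is a geodesic in $\mathcal X$.

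As a cross-check, or alternative route, one can argue fiberwise. If the geodesic is realized by fiberwise $W_2$-geodesics (for instance via a measurable selection of optimal plans), then in a geodesic space $d_Y(\gamma_t,y_0)\le (1-t)\,d_Y(\gamma_0,y_0)+t\,d_Y(\gamma_1,y_0)$ along each geodesic $\gamma$ in $Y$. Integrating over the optimal plan and then over $\mu$ bounds the moment of $\rho^t$ by $2\bigl(M(\rho^0)+M(\rho^1)\bigr)$, where $M$ denotes the $\mu$-averaged second moment. I would prefer the triangle-inequality argument, because it avoids measurable selection entirely.
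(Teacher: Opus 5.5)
Your argument is correct, and it takes a different route from the paper's. The paper argues fiberwise. It asserts that a $\mathcal W$-geodesic is, for $\mu$-a.e.\ $x$, a $W_2$-geodesic between $\rho^0_x$ and $\rho^1_x$. It then uses the inequality $\int_Y d_Y(y,y_0)^2\,\mathrm d\nu^t\le(1-t)\int_Y d_Y(y,y_0)^2\,\mathrm d\nu^0+t\int_Y d_Y(y,y_0)^2\,\mathrm d\nu^1$ along $W_2$-geodesics and integrates in $x$.

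You instead observe that the $\mu$-averaged second moment is exactly $\mathcal W(\rho,\delta)^2$ for the constant field $\delta\equiv\delta_{y_0}$. So $\mathcal X_\mu$ is precisely the set of fields at finite (extended) $\mathcal W$-distance from $\delta$, and the claim follows from the triangle inequality together with $\mathcal W(\rho^0,\rho^t)=t\,\mathcal W(\rho^0,\rho^1)$.

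What your route buys:
\begin{itemize}
  \item It needs no fiberwise decomposition of geodesics, which the paper states without proof.
  \item It needs no measurable selection.
  \item It uses nothing about $Y$ beyond being a metric space, in particular no curvature.
\end{itemize}
The last point matters. The paper's moment inequality amounts to convexity of $d_Y(\cdot,y_0)^2$ along geodesics. This holds for Euclidean or CAT(0) $Y$, but fails for general geodesic Polish spaces. On the round sphere, take $y_0$ the north pole and two points at colatitude $170^\circ$ on opposite meridians. The geodesic between them passes through the south pole, at distance $180^\circ$ from $y_0$, which exceeds the value $170^\circ$ at both endpoints.

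What the paper's route buys, where it applies, is the sharper bound $M(\rho^t)\le(1-t)M(\rho^0)+tM(\rho^1)$, compared with your $2t^2\mathcal W(\rho^0,\rho^1)^2+2\mathcal W(\rho^0,\delta)^2$. The statement only needs finiteness, so this sharpness is not required.

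The same caveat applies to your fiberwise cross-check. The inequality $d_Y(\gamma_t,y_0)\le(1-t)\,d_Y(\gamma_0,y_0)+t\,d_Y(\gamma_1,y_0)$ is Busemann convexity; it does not follow from $Y$ being geodesic, and the sphere example breaks it. Your stated bound $2\bigl(M(\rho^0)+M(\rho^1)\bigr)$ survives anyway. Set $a=d_Y(\gamma_0,y_0)$, $b=d_Y(\gamma_1,y_0)$ and $c=d_Y(\gamma_0,\gamma_1)\le a+b$. The triangle inequality alone gives $d_Y(\gamma_t,y_0)\le\min\{a+tc,\ b+(1-t)c\}\le a+b$. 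Preferring the triangle-inequality argument was the right call.
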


\begin{proof}
  By the product structure of $(\mathcal X,\mathcal W)$, a constant-speed geodesic $(\rho^t)_{t\in[0,1]}$ between $\rho^0$ and $\rho^1$ is given fiberwise by Wasserstein geodesics: for $\mu$-a.e.\ $x\in X$, $(\rho^t_x)_{t\in[0,1]}$ is a $W_2$-geodesic in $\mathcal P_2(Y)$ between $\rho^0_x$ and $\rho^1_x$.

  Fix $y_0\in Y$. In $\mathcal P_2(Y)$, along a $W_2$-geodesic $(\nu^t)_{t\in[0,1]}$ we have
  \[
  \int_Y d_Y(y,y_0)^2\,\mathrm d\nu^t(y)
  \le (1-t)\int_Y d_Y(y,y_0)^2\,\mathrm d\nu^0(y)
  + t\int_Y d_Y(y,y_0)^2\,\mathrm d\nu^1(y),
  \qquad t\in[0,1].
  \]
  Applying this with $\nu^t=\rho^t_x$ for $\mu$-a.e.\ $x$ and integrating with respect to $\mu$ yields
  \begin{align*}
    \int_X\int_Y d_Y(y,y_0)^2\,d\rho^t_x(y)\,\mathrm d\mu(x) \,
    &\,\le (1-t)\int_X\int_Y d_Y(y,y_0)^2\,\mathrm d\rho^0_x(y)\,\mathrm d\mu(x)\\
    &\qquad + t\int_X\int_Y d_Y(y,y_0)^2\,\mathrm d\rho^1_x(y)\,\mathrm d\mu(x)
  \end{align*}
  Since the right-hand side is finite by $\mu$-compatibility of $\rho^0$ and $\rho^1$, the left-hand side is finite for all $t\in[0,1]$, so each $\rho^t$ is $\mu$-compatible. This proves the claim.
\end{proof}

Between two $\mu$-compatible memory fields, it will be useful to consider a $\mu$-measurable family of optimal transport couplings between local (fiberwise) memory states. Hence the following lemma.

\begin{lemma}[Measurable selection of optimal couplings]
  \label{lem:couplings-selection}
  Let $\rho,\rho'\in\mathcal X_\mu$. For $\mu$-a.e.\ $x\in X$, let $\Pi(\rho_x,\rho'_x)$ denote the set of optimal couplings between $\rho_x$ and $\rho'_x$.
  Then, the set-valued map $x\mapsto \Pi(\rho_x,\rho'_x)$ admits a measurable selection.
\end{lemma}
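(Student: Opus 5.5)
The plan is to apply the Kuratowski--Ryll-Nardzewski measurable selection theorem to the set-valued map $\Gamma:x\mapsto \Pi(\rho_x,\rho'_x)$, which takes values in the Polish space $\mathcal P(Y\times Y)$ with the narrow topology. Three things must be checked: $\Gamma$ has nonempty closed values, and $\Gamma$ is weakly measurable. Since $\mu$ is only defined up to null sets, I will work on the full-measure set where $\rho_x,\rho'_x\in\mathcal P_2(Y)$. That set is in fact all of $X$, by the definition of $\mathcal X$.

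\textbf{Nonempty compact values.} For fixed $x$, the set of all couplings $\Gamma_{\rm all}(\rho_x,\rho'_x)$ is tight, since each marginal is tight on a Polish space. It is also narrowly closed, so it is compact by Prokhorov's theorem. The cost $\gamma\mapsto\int d_Y^2\,\mathrm d\gamma$ is narrowly lower semicontinuous, being a supremum of integrals of bounded continuous truncations $d_Y^2\wedge k$. It is also finite on this set, because
\[
d_Y(y,y')^2\le 2d_Y(y,y_0)^2+2d_Y(y',y_0)^2 .
\]
Hence the minimum is attained. The set of minimizers $\Pi(\rho_x,\rho'_x)$ is a closed subset of a compact set, so it is nonempty and compact.

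\textbf{Measurability.} This is the main obstacle. I would show that $\Gamma$ has a measurable graph, or more directly that it is the composition of a measurable map with an upper semicontinuous compact-valued map. Set
\[
F(\nu,\nu'):=\Pi(\nu,\nu')\qquad\text{on }\mathcal P_2(Y)\times\mathcal P_2(Y),
\]
with the $W_2$ topology. The standard stability of optimal plans (Villani, Theorem 5.20; Ambrosio--Gigli--Savar\'e) gives the following: if $(\nu_n,\nu'_n)\to(\nu,\nu')$ in $W_2$ and $\gamma_n\in\Pi(\nu_n,\nu'_n)$, then $(\gamma_n)$ is tight and every narrow limit point lies in $\Pi(\nu,\nu')$. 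So $F$ has closed graph and locally compact range, which makes it upper semicontinuous with compact values. For any open $U\subset\mathcal P(Y\times Y)$, write $U$ as an increasing union of closed sets $C_k$ with $C_k\subset\operatorname{int}C_{k+1}$. Then
\[
\{F\cap U\neq\emptyset\}=\bigcup_k\{F\cap C_k\neq\emptyset\},
\]
and each set on the right is closed by upper semicontinuity. Hence $\{F\cap U\neq\emptyset\}$ is Borel.

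Since $x\mapsto(\rho_x,\rho'_x)$ is Borel measurable into $\mathcal P_2(Y)^2$ by the definition of $\mathcal X$, it follows that
\[
\{x:\Gamma(x)\cap U\neq\emptyset\}
\]
is $\mu$-measurable, so $\Gamma$ is weakly measurable. If this step resisted, I would fall back on the measurable-graph version (Aumann's selection theorem). The graph of $\Gamma$ equals
\[
\{(x,\gamma): \text{the marginals of }\gamma\text{ are }\rho_x,\rho'_x,\ \textstyle\int d_Y^2\,\mathrm d\gamma=W_2^2(\rho_x,\rho'_x)\},
\]
which is Borel because $W_2$ is continuous and the cost is Borel in $\gamma$. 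Aumann's theorem then yields a selection that is measurable with respect to the $\mu$-completion.

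\textbf{Conclusion.} Kuratowski--Ryll-Nardzewski now gives a measurable $x\mapsto\gamma_x\in\Pi(\rho_x,\rho'_x)$. As a consistency check, I would note that $\int_X\int d_Y^2\,\mathrm d\gamma_x\,\mathrm d\mu=\mathcal W(\rho,\rho')^2<\infty$, because $\rho,\rho'\in\mathcal X_\mu$.
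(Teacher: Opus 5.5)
Your proof is correct and follows the same route as the paper: you show that $x\mapsto\Pi(\rho_x,\rho'_x)$ has nonempty closed values and then apply the Kuratowski--Ryll-Nardzewski theorem. The one difference is that you actually verify weak measurability, via upper semicontinuity of $(\nu,\nu')\mapsto\Pi(\nu,\nu')$ obtained from stability of optimal plans under $W_2$-convergence, whereas the paper only asserts it from the measurability of $x\mapsto\rho_x$ and $x\mapsto\rho'_x$.
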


\begin{proof}
  By standard optimal transport theory, this set is nonempty and closed in $\mathcal P_2(Y\times Y)$. Moreover, since $x\mapsto\rho_x$ and $x\mapsto\rho'_x$ are measurable (by definition of $\mathcal X_\mu \subset \mathcal X$), the set-valued map $x\mapsto \Pi(\rho_x,\rho'_x)$ admits a measurable selection by the Kuratowski--Ryll-Nardzewski theorem.
\end{proof}

\subsection{The context space}

We now introduce the notion of \emph{context}, which encodes the functional state of the system.
Let $(\Gamma, \mathrm m)$ be a measured space representing the \emph{internal activation space}. A \emph{context} is encoded by a $\mu$-measurable family of finite complex measures $\phi=\lbrace \phi_x\rbrace_{x\in X} \in \mathcal{M}(\Gamma)^X$, not necessarily normalized.

We consider the Banach space of complex measures
\[
  (\mathcal M(\Gamma),\|\cdot\|),
\]
where $\|\cdot\|$ is the total variation norm. We say that a context $\phi$ is \emph{$\mu$-compatible} if
\[
  \int_X \|\phi_x\|^2\,\mathrm d\mu(x)<+\infty,
\]
and we denote by $\mathcal C_\mu$ the set of $\mu$-compatible contexts i.e.
\[
 \mathcal C_\mu:=\left\lbrace
  \phi \in \mathcal{M}(\Gamma)^X \,\big|\,
  x\mapsto\phi_x \text{ is measurable},
  \int_X \|\phi_x\|^2\,\mathrm d\mu(x)<+\infty
  \right\rbrace.
\]
We endow $\mathcal C_\mu$ with the following distance
\[
  d_C(\phi, \phi'):=\left(\int_X \|\phi_x-\phi'_x\|^2\,\mathrm d\mu(x)\right)^{1/2},
  \qquad \phi,\phi'\in\mathcal C_\mu.
\]

\begin{remark}[Interpretation of the internal activation space and complex-valued contexts.]
  The internal activation space represents the state of the world, at least as perceived by the agent. When the world is perceived solely through neuronal activation, it is natural to take the internal activation space to coincide with the neuronal representation, i.e.\ $X$ and $Y$. This is analogous to a human perceiving the world only through sensory pathways translated into neural encodings, or to an artificial system encoding only what its sensors capture (visual signals, kinetic sensors etc.).

  In this view, $\|\phi_x\|$ can be seen as the ``intensity'' of context on $x$: it is not normalized. The complex-valued nature of contexts will reveal itself as natural in the case of generalized neural networks (Section~\ref{sec:networks}), where complex-valued synaptic weights allow convenient representations of phase synchronization and alignment mechanisms. Moreover, it endows the context space with a natural metric (via total variation norm), which provides a clean setting to define regularity assumptions relative to the contexts (Section~\ref{sec:continuous-limit}).

  In Section~\ref{sec:mirror-descent} we will see an example where $\Gamma=Y\times Y$, and in Section~\ref{sec:networks}, an example where $\Gamma=Y\times X$.
\end{remark}

\subsection{External representation space}

Let $(Z, d_Z)$ be a metric space, representing the external state space. $Z$ is the space where observables evolve. $Z$ is endowed with its borelian measure. A context $\phi$ is projected to $Z$ via a global signal (or signal) $S(\rho,\phi)$, where $S:\mathcal X_\mu \times \mathcal M(\Gamma)^X\to Z^X$ couples a memory field $\rho\in\mathcal X_\mu$ with the context.

\section{Hebbian--Wasserstein compatible energies}\label{sec:energies}

The intuition behind the framework is that Hebbian memory updates behave like a Wasserstein minimizing-movement dynamics. We therefore introduce a class of energies whose fiberwise structure and stability properties ensure that a JKO-type scheme is well posed.

\subsection{Hebbian energies: structural definition}
We interpret Hebbian principle~\parencite{hebb} as follows: each unit updates its local memory state based solely on its own activity and on a global co-activation signal summarising the collective configuration. In our formalism, the ``weight'' of $x'$ on $x$ is represented by the measure $\rho_x$ around $x'$. Thus the energy driving the update of $\rho_x$ should depend only on the pair $(\rho_x, S_x(\rho,\phi))$, where $S_x(\rho,\phi)$ encodes the global context.

\begin{definition}[Hebbian energy]\label{def:hebbian}
  An energy functional
  \[
  E : \mathcal X_\mu \times \mathcal M(\Gamma)^X \to \mathbb R\cup\{+\infty\}
  \]
  is \emph{Hebbian} if there exist:
  \begin{itemize}
    \item a metric space $(Z,d_Z)$, the \emph{external state space}, equiped with its Borel $\sigma$-algebra;
    \item a family of maps $S=\lbrace S_x \rbrace_{x\in X}$, the \emph{global signal}, with each
    \[
    S_x:\mathcal X_\mu \times \mathcal M(\Gamma)^X \to Z,
    \]
    such that for every $(\rho, \phi)\in\mathcal X_\mu \times \mathcal M(\Gamma)^X$,
    \[
    x\mapsto S_x(\rho, \phi)
    \]
    is Borel measurable as a map from $X$ to $Z$;
    \item a family of maps $\mathcal E = \lbrace \mathcal E_x \rbrace_{x\in X}$, the \emph{local energy density}, with each
    \[
    \mathcal E_x : \mathcal P_2(Y) \times Z \to \mathbb R \cup \{+\infty\},
    \]
    such that the map
    \[
    (x,\eta,z)\mapsto \mathcal E_x(\eta,z)
    \]
    is Borel measurable on $X\times\mathcal P_2(Y)\times Z$;
  \end{itemize}
  such that for all $(\rho,\phi)$:
  \[
  E(\rho,\phi)
  =\int_X \mathcal E_x\big(\rho_x,\,S_x(\rho,\phi)\big)\,\mathrm d\mu(x).
  \]
\end{definition}

This definition captures our structural Hebbian requirement: the energy decouples across fibres, and each fibre (i.e. each unit) interacts with the rest of the system only through the global signal.

\subsection{Sequential stability: analytic admissibility}

When the global signal is held fixed, each unit should respond in a stable and well-behaved manner to this signal. This reflects the sequential nature of synaptic plasticity, where updates occur locally and asynchronously under a slowly varying global influence.

\begin{definition}[Sequential stability]
  \label{def:seq-stab}
  A Hebbian energy $E$, with local energy density $\mathcal E$,
  is \emph{sequentially stable} if for every reference configuration $(\phi^{\mathrm{ref}}, \rho^{\mathrm{ref}}) \in \mathcal M(\Gamma)^X \times \mathcal{X}_\mu$, the \emph{frozen energy}, defined by
  \[
  E_{\phi^{\mathrm{ref}},\rho^{\mathrm{ref}}}(\rho)
  := \int_X \mathcal{E}_x\big(\rho_x, h_x\big)\, \mathrm{d}\mu(x),
  \qquad\text{where } h=S(\rho^{\mathrm{ref}},\phi^{\mathrm{ref}})\text{ is fixed},
  \]
  satisfies the AGS assumptions, namely:
  \begin{enumerate}
    \item \textbf{Properness:} $E_{\phi^{\mathrm{ref}},\rho^{\mathrm{ref}}}$ is not identically $+\infty$ and never takes the value $-\infty$.
    \item \textbf{Lower semicontinuity:} $E_{\phi^{\mathrm{ref}},\rho^{\mathrm{ref}}}$ is lower semicontinuous with respect to $\mathcal W$-convergence on $\mathcal X_\mu$.
    \item \textbf{Geodesic convexity:} $E_{\phi^{\mathrm{ref}},\rho^{\mathrm{ref}}}$ is $\lambda$-geodesically convex in $(\mathcal X_\mu,\mathcal W)$, $\lambda>-\infty$.
  \end{enumerate}
\end{definition}

\emph{Notations.} At a fixed reference configuration $(\phi^{\mathrm{ref}}, \rho^{\mathrm{ref}})$, we denote by $S_{\phi^{\mathrm{ref}}, \rho^{\mathrm{ref}}}:=S(\rho^{\mathrm{ref}},\phi^{\mathrm{ref}})$ the \emph{frozen global signal} (or simply \emph{frozen signal}) and by
\[
E_{\phi^{\mathrm{ref}},\rho^{\mathrm{ref}}}(\rho)
:= \int_X \mathcal{E}_x\big(\rho_x, S_{\phi^{\mathrm{ref}},\rho^{\mathrm{ref}},\, x}\big)\, \mathrm{d}\mu(x)
\]
the \emph{frozen energy}.

The AGS assumptions~\parencite{ags} guarantee the well-posedness of minimizing-movement schemes with the frozen energy as we will see later. Nevertheless, in practice, the definition of a Hebbian energy is naturally given by defining its local energy density. In such a case, establishing sequential stability is more natural at the level of the local energy itself. Hence the following definition.

\begin{definition}[Local sequential stability]
  \label{def:loc-seq-stab}
  A Hebbian energy $E$ is \emph{locally sequentially stable} if for every reference configuration $(\phi^{\mathrm{ref}}, \rho^{\mathrm{ref}}) \in \mathcal M(\Gamma)^X \times \mathcal{X}_\mu$, for $\mu$-a.e.\ $x\in X$, when fixing the global signal $z=S_x(\rho^{\mathrm{ref}},\phi^{\mathrm{ref}})\in Z$, the local energy density
  \[
  \mathcal E_x(\cdot,z) : \mathcal P_2(Y) \to \mathbb R \cup \{+\infty\}
  \]
  satisfies:
  \begin{enumerate}
    \item \textbf{Properness:} $\mathcal E_x(\cdot,z)$ is not identically $+\infty$ and never takes the value $-\infty$.
    \item \textbf{Lower semicontinuity:} $\mathcal E_x(\cdot,z)$ is lower semicontinuous with respect to $W_2$-convergence on $\mathcal P_2(Y)$.
    \item \textbf{Geodesic convexity:} $\mathcal E_x(\cdot,z)$ is $\lambda$-geodesically convex in $(\mathcal P_2(Y),W_2)$, $\lambda>-\infty$.
    \item \textbf{Lower bound:} there exists a function $c:X\times Z\to \mathbb R \cup \{+\infty\}$ such that for all $z\in Z$:
    \[
    c(\cdot,z) \in \mathrm L^1(X,\mu),
    \quad\text{and}\quad
    \forall x\in X, \forall \eta\in \mathcal P_2(Y), \quad \mathcal E_x(\eta,z) \ge c(x,z).
    \]
    \item \textbf{Existence of a finite-energy field:} for every reference configuration $(\phi^{\mathrm{ref}}, \rho^{\mathrm{ref}})$, there exists at least one $\rho\in\mathcal X_\mu$ such that
    \[
    \int_X \mathcal E_x(\rho_x,S_x(\rho^{\mathrm{ref}},\phi^{\mathrm{ref}}))\,\mathrm d\mu(x)
    <+\infty.
    \]
  \end{enumerate}
\end{definition}

The lower bound property and the existence of a finite-energy field property are key to ensure properness of the total energy, so that we can have the following proposition:

\begin{proposition}[Local sequential stability implies sequential stability]
  \label{prop:loc-seq-stab}
  A locally sequentially stable Hebbian energy is sequentially stable.
\end{proposition}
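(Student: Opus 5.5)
We fix a reference configuration $(\phi^{\mathrm{ref}},\rho^{\mathrm{ref}})$ and set $h=S(\rho^{\mathrm{ref}},\phi^{\mathrm{ref}})$, so the frozen energy is $F(\rho):=\int_X \mathcal E_x(\rho_x,h_x)\,\mathrm d\mu(x)$. We check the three AGS properties of Definition~\ref{def:seq-stab} one at a time, using the five local properties of Definition~\ref{def:loc-seq-stab} for the same reference configuration.

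\textbf{Well-definedness and properness.} Measurability of $x\mapsto \mathcal E_x(\rho_x,h_x)$ follows from the joint Borel measurability in Definition~\ref{def:hebbian}, since $x\mapsto(x,\rho_x,h_x)$ is measurable. By the lower bound we have $\mathcal E_x(\rho_x,h_x)\ge c(x,h_x)$. To get an integrable minorant of the negative part, we need $x\mapsto c(x,h_x)$ to be integrable. The stated hypothesis only gives $c(\cdot,z)\in L^1$ for each fixed $z$, so we read it as a uniform bound along the signal, namely $c(\cdot,h_\cdot)\in L^1(X,\mu)$. This holds automatically when $h$ is constant in $x$ or takes countably many values, and we would state this reading explicitly. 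With it, the negative part of the integrand is integrable, so $F(\rho)\in\mathbb R\cup\{+\infty\}$ is well defined and never $-\infty$. The existence of a finite-energy field gives $F\not\equiv+\infty$.

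\textbf{Lower semicontinuity.} Let $\rho^n\to\rho$ in $\mathcal W$. Then $\int_X W_2^2(\rho^n_x,\rho_x)\,\mathrm d\mu\to0$, so along a subsequence $W_2(\rho^n_x,\rho_x)\to0$ for $\mu$-a.e.\ $x$. We first extract a subsequence realizing $\liminf_n F(\rho^n)$, and then a further subsequence converging fiberwise a.e. Local lower semicontinuity gives $\liminf_n \mathcal E_x(\rho^n_x,h_x)\ge \mathcal E_x(\rho_x,h_x)$ a.e. Applying Fatou's lemma to $\mathcal E_x(\rho^n_x,h_x)-c(x,h_x)\ge0$, and using integrability of $c(\cdot,h_\cdot)$ again, gives $\liminf_n F(\rho^n)\ge F(\rho)$.

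\textbf{Geodesic convexity.} Take $\rho^0,\rho^1\in\mathcal X_\mu$. By Lemma~\ref{lem:couplings-selection} we choose a measurable family of optimal couplings, and in each fiber we take the constant-speed $W_2$-geodesic $\rho^t_x$ along which the local $\lambda$-convexity inequality holds. In AGS this is a geodesic along which convexity holds, which may depend on the chosen geodesic; measurability of this choice is the delicate point. By Proposition~\ref{prop:mu-compatible-geodesic}, $\rho^t\in\mathcal X_\mu$. Since $\mathcal W^2$ integrates the fiber costs, $\rho^t$ is a $\mathcal W$-geodesic. The fiberwise inequality reads
\[
\mathcal E_x(\rho^t_x,h_x)\le (1-t)\mathcal E_x(\rho^0_x,h_x)+t\,\mathcal E_x(\rho^1_x,h_x)-\tfrac{\lambda}{2}t(1-t)W_2^2(\rho^0_x,\rho^1_x).
\]
Integrating it in $\mu$ gives $\lambda$-convexity of $F$ with the same $\lambda$, because $\int_X W_2^2(\rho^0_x,\rho^1_x)\,\mathrm d\mu=\mathcal W^2(\rho^0,\rho^1)$. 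The integration is legitimate: if $F(\rho^0)$ or $F(\rho^1)$ is $+\infty$ there is nothing to prove, and otherwise all terms are integrable using the lower bound. This requires the constant $\lambda$ to be uniform in $x$, which we read into the definition.

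\textbf{Main obstacle.} We expect the main difficulty to be the measurable choice of the fiberwise geodesics, which must be optimal couplings along which convexity holds. We would address it by applying the Kuratowski--Ryll-Nardzewski theorem, as in Lemma~\ref{lem:couplings-selection}, to the closed set of optimal plans $\gamma_x$ satisfying the convexity inequality. If needed, we would fall back on the interpolation map $\gamma\mapsto\rho^t$, which is measurable when $Y$ has a measurable geodesic selection, for example in CAT(0) spaces where geodesics are unique.
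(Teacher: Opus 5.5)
Your proposal is correct and follows the paper's proof step for step. Properness comes from the lower bound together with the finite-energy field. Lower semicontinuity comes from an a.e.\ fiberwise convergent subsequence and Fatou applied to $\mathcal E_x(\rho^n_x,h_x)-c(x,h_x)$. $\lambda$-convexity comes from integrating the fiberwise inequality along fiberwise $W_2$-geodesics.

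The extra conditions you make explicit are used tacitly in the paper: integrability of $x\mapsto c(x,h_x)$, a $\lambda$ uniform in $x$, a liminf-realizing subsequence, and a measurable choice of fiber geodesics. The paper simply writes $\int_X c(x,h_x)\,\mathrm d\mu(x)>-\infty$ and integrates with a single $\lambda$, so your reading is the right one. Indeed, integrability of $c(\cdot,z)$ for each fixed $z$ alone would not suffice.
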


\begin{proof}
  Fix a reference configuration $(\phi^{\mathrm{ref}},\rho^{\mathrm{ref}})\in\mathcal M(\Gamma)^X\times\mathcal X_\mu$ and set $h:=S(\rho^{\mathrm{ref}},\phi^{\mathrm{ref}})\in Z^X$.

  Define the frozen energy
  \[
  E_{\phi^{\mathrm{ref}},\rho^{\mathrm{ref}}}(\rho)
  := \int_X \mathcal E_x(\rho_x,h_x)\,\mathrm d\mu(x),
  \qquad \rho\in\mathcal X_\mu.
  \]

  \emph{Properness.}
  The lower bound implies
  \[
  E_{\phi^{\mathrm{ref}},\rho^{\mathrm{ref}}}(\rho)\ge \int_X c(x,h_x)\,\mathrm d\mu(x)>-\infty
  \]
  for all $\rho$, so the functional never takes the value $-\infty$. Moreover, the existence of a finite-energy field property ensures that the frozen energy is not identically $+\infty$. Hence the properness of the frozen energy.

  \emph{Lower semicontinuity.}
  Let $(\rho^n)_{n\in\mathbb N}\subset\mathcal X_\mu$ converge to $\rho$ in $(\mathcal X_\mu,\mathcal W)$.
  Up to a subsequence, we may assume $\rho^n_x\to\rho_x$ in $W_2$ for $\mu$-a.e.\ $x$. By fiberwise lower semicontinuity,
  \[
  \mathcal E_x(\rho_x,h_x)
  \le \liminf_{n\to\infty} \mathcal E_x(\rho^n_x,h_x)
  \qquad\text{for $\mu$-a.e.\ }x.
  \]
  Subtracting the integrable lower bound $c(x,h_x)$ and applying Fatou’s lemma to the nonnegative functions $\mathcal E_x(\rho^n_x,h_x)-c(x,h_x)$ yields
  \[
  \int_X \mathcal E_x(\rho_x,h_x)\,\mathrm d\mu(x)
  \;\le\;
  \liminf_{n\to\infty} \int_X \mathcal E_x(\rho^n_x,h_x)\,\mathrm d\mu(x),
  \]
  that is,
  \[
  E_{\phi^{\mathrm{ref}},\rho^{\mathrm{ref}}}(\rho)
  \;\le\;
  \liminf_{n\to\infty} E_{\phi^{\mathrm{ref}},\rho^{\mathrm{ref}}}(\rho^n),
  \]
  so $E_{\phi^{\mathrm{ref}},\rho^{\mathrm{ref}}}$ is $\mathcal W$–lower semicontinuous on $\mathcal X_\mu$.

  \emph{Geodesic convexity.}
  By the product structure of $(\mathcal X,\mathcal W)$, a constant-speed geodesic $(\rho^t)_{t\in[0,1]}$ between $\rho^0$ and $\rho^1$ is given fiberwise by Wasserstein geodesics:
  for $\mu$-a.e.\ $x\in X$, $(\rho^t_x)_{t\in[0,1]}$ is a $W_2$-geodesic in $\mathcal P_2(Y)$ between $\rho^0_x$ and $\rho^1_x$.
  By fiberwise $\lambda$-geodesic convexity ($\lambda>-\infty$), for every $t\in[0,1]$ and $\mu$-a.e.\ $x$,
  \[
  \mathcal E_x(\rho^t_x,h_x)
  \;\le\; (1-t)\,\mathcal E_x(\rho^0_x,h_x) + t\,\mathcal E_x(\rho^1_x,h_x)
  - \frac{\lambda}{2}t(1-t)W_2^2(\rho_x^0, \rho_x^1).
  \]
  Integrating over $X$ gives
  \[
  E_{\phi^{\mathrm{ref}},\rho^{\mathrm{ref}}}(\rho^t)
  \;\le\;
  (1-t)\,E_{\phi^{\mathrm{ref}},\rho^{\mathrm{ref}}}(\rho^0)
  + t\,E_{\phi^{\mathrm{ref}},\rho^{\mathrm{ref}}}(\rho^1)
  - \frac{\lambda}{2}t(1-t)\mathcal W^2(\rho^0, \rho^1),
  \]
  showing $\lambda$-geodesic convexity in $(\mathcal X_\mu,\mathcal W)$ ($\lambda>-\infty$).

  All AGS conditions are therefore satisfied, and the frozen energy is sequentially stable.
\end{proof}

\subsection{Tan--HWG energy class}

\emph{Notations.} To avoid confusion with existing vocabulary, we will regularly use the tag ``Tan--HWG'', standing for Hebbian--Wasserstein--Geometry, to denote specific properties of our framework.

This leads to the definition of the class of energies that will be considered in our framework.

\begin{definition}[Tan--HWG energy]
  \label{def:tan-hwg-en}
  A Hebbian energy satisfying sequential stability is called a \emph{Tan--HWG energy}.
\end{definition}

Section~\ref{sec:quadratic} provides examples of Tan--HWG energies. Beyond biological motivations (i.e.\ sequential nature of the updates), freezing the signal at each step improves computational practicality, while preserving enough stability for a well-posed dynamics. The next section details and justifies the update mechanism.

\section{Discrete Tan--HWG dynamics}\label{sec:dynamics}

We now construct the dynamics associated with Tan--HWG energies. The key idea is that at each time step, the global signal is frozen, and each fibre performs a Wasserstein proximal step with respect to the frozen energy.

\subsection{The Tan--HWG minimizing-movement scheme}
\label{sec:jko-tan-hwg}

Given a Tan--HWG energy and an initial configuration $\rho^0 \in \mathcal{X}_\mu$, the discrete-time Tan--HWG evolution is defined by the JKO-type scheme \parencite{jko1998,ags}
\begin{equation}
  \rho^{n+1}_\tau \in \argmin_{\rho\, \in\, \mathcal{X}_\mu}
  \left\{
  E_{\phi^n,\rho^n_\tau}(\rho)
  + \frac{1}{2\tau} \int_X W_2^2(\rho_x,\rho^n_{\tau,x})\, \mathrm{d}\mu(x)
  \right\}.
  \label{eq:jko-tan-hwg}
\end{equation}
for a timestep $\tau>0$.

The sequential stability conditions of Definition~\ref{def:seq-stab} ensure that, at each step, the minimisation problem is well-posed. In particular, every step produces a stable Hebbian update of the memory field.

This motivates the definition of the $\tau$-Tan--HWG update operator
\[
T_{\tau}^E(\rho \,|\, \phi)
:= \argmin_{\rho'\, \in\, \mathcal{X}_\mu}
\left\{
E_{\phi,\rho}(\rho')
+ \frac{1}{2\tau} \int_X W_2^2(\rho'_x,\rho_x)\, \mathrm{d}\mu(x)
\right\}.
\]
As is standard in minimizing-movement schemes, the operator may be multivalued. We write $T_{\tau}^E(\rho \,|\, \phi)$ to emphasize that the Tan--HWG update acts on the memory field $\rho$ while the context $\phi$ is frozen and plays the role of an external parameter.

\subsection{Well-posedness of Tan–HWG minimizing-movement scheme}

The following Theorem formally states that Tan--HWG energies---that is, Hebbian and sequentially stable energies---generate a well-posed Tan--HWG scheme, with an energy descent inequality (EDI).

\begin{theorem}[Discrete Tan--HWG dynamics]
  \label{thm:tan-hwg}
  Let $\tau>0$ be a fixed time step. Let $E$ be a Tan--HWG energy with local energy density $\mathcal E$ and global signal $S$. Let $T_{\tau}^E$ be the $\tau$-Tan--HWG update operator defined by
  \[
  T_{\tau}^E(\rho\,|\,\phi)
  := \argmin_{\rho'\, \in\, \mathcal{X}_\mu}
  \left\{
  E_{\phi,\rho}(\rho')
  + \frac{1}{2\tau} \mathcal{W}^2(\rho',\rho)
  \right\}.
  \]
  Let $(\phi^n_\tau)_{n\ge 0}\in\left(\mathcal M(\Gamma)^X\right)^{\mathbb N}$ be a set of contexts.

  Then for every $\rho^0_\tau \in \mathcal{X}_\mu$, the sequence
  \[
  \rho^{n+1}_\tau \in T_{\tau}^E(\rho^n_\tau \,|\, \phi^n_\tau), \qquad \text{for all } n \ge 0,
  \]
  is well-defined. Moreover:
  \begin{itemize}
    \item the update is fiberwise: for $\mu$-a.e.\ $x$, $\rho^{n+1}_{\tau,x}$ solves an independent convex minimisation problem;
    \item the scheme satisfies the energy descent inequality (EDI)
    \[
    E_{\phi^n_\tau,\rho^n_\tau}(\rho^{n+1}_\tau) + \frac{1}{2\tau}\mathcal W^2(\rho^{n+1}_\tau,\rho^n_\tau)
    \le
    E_{\phi^n_\tau,\rho^n_\tau}(\rho^n_\tau).
    \]
  \end{itemize}
  Thus the Tan--HWG scheme defines a well-posed discrete-time dynamics on $(\mathcal X_\mu, \mathcal W)$, in the sense of a minimizing-movement scheme.
\end{theorem}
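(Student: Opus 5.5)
The plan is to argue by induction on $n$, so it suffices to study a single step. Fix $\rho:=\rho^n_\tau\in\mathcal X_\mu$ and $\phi:=\phi^n_\tau$, freeze $h:=S(\rho,\phi)$, and consider
\[
\Phi(\rho'):=E_{\phi,\rho}(\rho')+\frac{1}{2\tau}\mathcal W^2(\rho',\rho),\qquad \rho'\in\mathcal X_\mu .
\]
The step is to show that $\Phi$ has a minimizer in $\mathcal X_\mu$. By sequential stability (Definition~\ref{def:seq-stab}), $E_{\phi,\rho}$ is proper, $\mathcal W$-lower semicontinuous and $\lambda$-geodesically convex on the complete metric space $(\mathcal X_\mu,\mathcal W)$. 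This is the setting of the AGS existence theory for minimizing movements. For a general geodesic Polish $Y$ there is no local compactness, so I would avoid the direct method based on compactness. Instead I would use convexity along the right curves: for $\tau$ small enough that $1/\tau+\lambda>0$, the functional $\Phi$ is $(1/\tau+\lambda)$-convex along curves along which $\tfrac12\mathcal W^2(\cdot,\rho)$ is $1$-convex. In the CAT(0)/Hilbertian examples these curves are geodesics. In general I would use the AGS generalized geodesics with base point $\rho$, fiberwise.

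With such uniform convexity, any minimizing sequence $(\rho^k)$ is $\mathcal W$-Cauchy. The standard estimate is
\[
\tfrac{c}{8}\mathcal W^2(\rho^k,\rho^l)\le \tfrac12\Phi(\rho^k)+\tfrac12\Phi(\rho^l)-\inf\Phi ,
\]
applied to the midpoint curve. Completeness then gives a limit $\bar\rho\in\mathcal X_\mu$, and lower semicontinuity of $E_{\phi,\rho}$ together with continuity of $\mathcal W^2(\cdot,\rho)$ shows that $\bar\rho$ is a minimizer. The infimum is finite because properness gives a finite-energy field and the energy never takes the value $-\infty$.

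Convexity assumed only along geodesics, with no uniform lower bound on the generalized curves, is the main obstacle. If it fails, I would fall back on AGS coercivity: a lower bound $E_{\phi,\rho}\ge -A-B\mathcal W^2(\cdot,\rho)$ combined with the convexity–lower semicontinuity criterion. Convex sublevel sets are then weakly closed in the product structure, and I would invoke AGS Lemma 2.4.8 and Theorem 4.1.2 for existence at every $\tau>0$. I expect to have to state these results as imported rather than reprove them. Geodesic convexity of $\mathcal X_\mu$ (Proposition~\ref{prop:mu-compatible-geodesic}) guarantees that the curves used stay inside $\mathcal X_\mu$. By induction, every $\rho^{n+1}_\tau$ is then defined.

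For the fiberwise statement, both terms of $\Phi$ are integrals over $X$ of quantities depending only on $\rho'_x$:
\[
\Phi(\rho')=\int_X\Big(\mathcal E_x(\rho'_x,h_x)+\tfrac1{2\tau}W_2^2(\rho'_x,\rho_x)\Big)\mathrm d\mu(x).
\]
One inequality is pointwise: for $\mu$-a.e.\ $x$, a global minimizer $\bar\rho$ minimizes the fiber problem. If it did not, on a set of positive measure one could replace $\bar\rho_x$ by strictly better fiber competitors and lower $\Phi$. That replacement must be measurable, and I would obtain it by a measurable selection argument in the spirit of Lemma~\ref{lem:couplings-selection} (Kuratowski--Ryll-Nardzewski applied to the $\varepsilon$-minimizer sets). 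Each fiber problem is convex in the sense that $\mathcal E_x(\cdot,h_x)$ is $\lambda$-geodesically convex, by local sequential stability or by the same reasoning fiberwise. The fibers are coupled only through the frozen $h$, so they are independent.

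The EDI follows by comparing the minimizer with the competitor $\rho'=\rho^n_\tau$, which lies in $\mathcal X_\mu$:
\[
\Phi(\rho^{n+1}_\tau)\le\Phi(\rho^n_\tau)=E_{\phi^n_\tau,\rho^n_\tau}(\rho^n_\tau)+0 .
\]
This is exactly the stated inequality. It holds trivially if the right-hand side is $+\infty$, and it holds with finite values once $\rho^n_\tau$ has finite frozen energy.
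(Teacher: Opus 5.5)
Your fiberwise reduction (measurable selection) and your EDI (testing with $\rho'=\rho^n_\tau$) coincide with the paper's Steps~2 and~3. The gap is Step~1, the existence of a minimizer of $\Phi$, and your distrust of it is justified. The paper's Step~1 only asserts that properness, lower semicontinuity and $\lambda$-convexity along $\mathcal W$-geodesics yield a minimizer, and AGS do not provide this.

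Your Cauchy argument needs two things that sequential stability does not give. The first is convexity of $E_{\phi,\rho}$ along curves on which $\tfrac12\mathcal W^2(\cdot,\rho)$ is $1$-convex. The second is $\tau<1/\lambda^-$. Geodesics do not play this role in the examples you invoke:
\begin{itemize}
  \item For $Y=\mathbb R^2$, the map $\eta\mapsto\tfrac12 W_2^2(\eta,\sigma)$ is not $\lambda$-convex along $W_2$-geodesics for any $\lambda$. This is exactly why AGS introduce generalized geodesics.
  \item For the star tree of Section~1, $\mathcal P_2(Y)$ is not even uniquely geodesic. Take $a,b$ on two edges at equal distance from the hub and $c\neq c'$ on a third edge. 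Every coupling of $\tfrac12(\delta_a+\delta_b)$ and $\tfrac12(\delta_c+\delta_{c'})$ is optimal, and the two deterministic couplings give different displacement interpolations.
\end{itemize}
So the paper's assertion that $\mathcal P_2(Y)$ is CAT(0) whenever $Y$ is cannot be relied on. Generalized geodesics need a linear structure on $Y$, and convexity of $E$ along them is an extra hypothesis.

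Your fallback does not repair this. Outside the flat case $Y=\mathbb R$ (via quantile functions), there is no weak topology on $\mathcal X_\mu$ in which convex $\mathcal W$-closed sets are closed and bounded sets are compact. Already for $X=Y=[0,1]$ with Lebesgue $\mu$, the fields $x\mapsto\delta_{\sin^2(k\pi x)}$ are pairwise at $\mathcal W$-distance $\tfrac12$. Moreover, the AGS results you quote are proved under their Assumption~4.0.1, which is precisely the missing convexity. Finally, the fact that the energy never takes the value $-\infty$ does not make $\inf\Phi$ finite.

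In fact, existence for every $\tau>0$ is false under the stated hypotheses. Take $X=\{x_0\}$, $Y=\mathbb R$, $Z$ a point, and $\mathcal E_{x_0}(\eta,z)=-\tfrac{\kappa}{2}\int_{\mathbb R}y^2\,\mathrm d\eta(y)$ with $\kappa>0$. This density is finite and $W_2$-continuous. Along every $W_2$-geodesic induced by an optimal plan,
\[
\int_{\mathbb R}y^2\,\mathrm d\eta_t=(1-t)\int_{\mathbb R}y^2\,\mathrm d\eta_0+t\int_{\mathbb R}y^2\,\mathrm d\eta_1-t(1-t)\,W_2^2(\eta_0,\eta_1),
\]
so $E$ is $(-\kappa)$-geodesically convex and hence a Tan--HWG energy. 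With $\rho^0_\tau=\delta_0$ we get
\[
\Phi(\delta_a)=\Big(\frac{1}{2\tau}-\frac{\kappa}{2}\Big)a^2\longrightarrow-\infty\quad\text{as }|a|\to\infty,\qquad\text{whenever }\tau>1/\kappa,
\]
so $T^E_\tau(\delta_0\,|\,\phi)=\emptyset$.

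What your argument does establish is existence for $0<\tau<1/\lambda^-$ (every $\tau$ if $\lambda\ge0$), provided the frozen energies satisfy AGS Assumption~4.0.1 on $(\mathcal X_\mu,\mathcal W)$. Here the quadratic lower bound that AGS derive from that same convexity assumption gives $\inf\Phi>-\infty$. Admissible settings include $Y=\mathbb R$, or a Hilbertian $Y$ with convexity along fiberwise generalized geodesics assumed. Alternatively, one can impose the compactness hypotheses of AGS, Section~2.1. The theorem has to be restricted in one of these ways; neither your proof nor the paper's proves it as written.
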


\begin{proof}
  Fix $\rho^0_\tau \in \mathcal{X}_\mu$ and $n \ge 0$. For $\rho\in\mathcal X_\mu$, define
  \[
  \mathcal{F}_n(\rho)
  := E_{\phi^n_\tau,\rho^n_\tau}(\rho)
  + \frac{1}{2\tau} \mathcal{W}^2(\rho,\rho^n_\tau)
  = \int_X \mathcal F_{n,x}(\rho_x)\, \mathrm d\mu(x),
  \]
  where
  \[
  \mathcal F_{n,x}(\eta)
  := \mathcal E_x(\eta, S_{\phi^n_\tau,\rho^n_\tau,\,x}) + \frac{1}{2\tau} W_2^2(\eta,\rho^n_{\tau,x}).
  \]

  \emph{Step 1: existence of the update.}
  By sequential stability, the frozen energy map
  \[
  \rho \mapsto E_{\phi^n_\tau,\rho^n_\tau}(\rho)
  \]
  satisfies the AGS assumptions on $(\mathcal X_\mu,\mathcal W)$. Hence each $\mathcal F_n$ admits at least one minimizer $\rho^{n+1}_\tau\in\mathcal X_\mu$. Hence the sequence $\rho^{n+1}_\tau \in T_{\tau}^E(\rho^n_\tau \,|\, \phi^n_\tau)$ for all $n \ge 0$ is well-defined.

  \emph{Step 2: fiberwise reduction.}
  Since $\mathcal F_n$ is an integral of decoupled fiberwise functionals, any minimizer $\rho$ of $\mathcal F_n$ must satisfy, for $\mu$-a.e.\ $x$,
  \[
  \rho_x \in \argmin_{\eta\in\mathcal P_2(Y)} \mathcal F_{n,x}(\eta).
  \]
  Otherwise, replacing $\rho_x$ on a set of positive $\mu$-measure by a measurable selection of fiberwise minimizers would strictly decrease $\mathcal F_n$, while preserving $\mu$--compatibility, a contradiction.

  \emph{Step 3: energy descent inequality.}
  Since $\rho^{n+1}_\tau$ minimises $\mathcal{F}_n$, we have
  \[
  \mathcal{F}_n(\rho^{n+1}_\tau) \le \mathcal{F}_n(\rho^n_\tau),
  \]
  which expands as
  \[
  E_{\phi^n_\tau,\rho^n_\tau}(\rho^{n+1}_\tau)
  + \frac{1}{2\tau} \mathcal{W}^2(\rho^{n+1}_\tau,\rho^n_\tau)
  \le
  E_{\phi^n_\tau,\rho^n_\tau}(\rho^n_\tau).
  \]
  This is exactly the stated energy descent inequality.

  Thus the Tan--HWG update is well defined, with fiberwise minimisation and EDI, and the discrete dynamics is well posed.
\end{proof}

\begin{remark}[First variation]
  For a frozen signal $S_{\phi, \rho}$, the frozen energy admits a metric subdifferential which decomposes fiberwise:
  \[
  \xi\in\partial E_{\phi,\rho}(\rho')
  \quad\Longleftrightarrow\quad
  \xi_x\in\partial_1\mathcal E_x(\rho'_x,S_{\phi, \rho, x})
  \quad\text{for $\mu$-a.e.\ }x.
  \]
  Whenever $E$ is a Tan--HWG energy, we write
  \[
  \frac{\delta E}{\delta\rho_x}(\rho,\phi)
  \in \partial_1\mathcal E_x\big(\rho_x,S_x(\rho,\phi)\big)
  \]
  for any choice of fiberwise subgradient. The formal identity
  \[
  \frac{\delta E}{\delta\rho_x}(\rho,\phi)
  = \frac{\partial\mathcal E_x}{\partial\rho}\big(\rho_x,S_x(\rho,\phi)\big)
  \]
  is therefore understood in this subdifferential sense, with the global signal frozen.
  In other words, the first variation of $E$ is defined relative to the subdifferential of the frozen energy.
\end{remark}

\begin{remark}[Global signal and continuous Tan--HWG dynamics]
  Since the global signal is frozen during each fixed time step $\tau>0$, no additional regularity assumption on the signal $S$ is required for the well-posedness of the discrete Tan--HWG dynamics.

  In contrast, the identification of a vanishing time-step limit $\tau\to0$ as a Wasserstein gradient flow requires further assumptions, in particular on the regularity, stability, and coercivity properties of the signal $S$ and of the associated energy.
  Even for fixed context $\phi$, the energy
  \[
  E(\rho,\phi) = \int_X \mathcal E_x\big(\rho_x, S_x(\rho, \phi)\big)\,\mathrm{d}\mu(x)
  \]
  must satisfy the structural conditions of the AGS minimizing-movement theory~\parencite{ags}.

  In this work, we focus on discrete Tan--HWG dynamics, which are both mathematically well-posed and computationally practical (e.g.\ see Section~\ref{sec:closed-form}).
  Nevertheless, Section~\ref{sec:continuous-limit} will explore a continuous-time limit of the Tan--HWG scheme under the special case of a quadratic energy with Lipschitz global signal and compatible set of contexts.
\end{remark}

\subsection{Brenier and metric graph realizations of Tan--HWG updates}

In the Euclidean case, the fiberwise JKO update admits a classical optimal-transport representation. In the special case $Y=\mathbb R^d$ and the $\rho^n_{\tau,x}$ admit densities, we can deduce a Brenier realization of the update:

\begin{corollary}[Brenier realization of Tan--HWG updates]
  \label{cor:brenier}
  Assume the setting of the $\tau$-Tan--HWG scheme, with $Y = \mathbb{R}^d$. Suppose moreover that for each $n\ge 0$ and $\mu$-a.e.\ $x\in X$, the measures $\rho^n_{\tau,x}$ and $\rho^{n+1}_{\tau,x}$ admit densities with respect to the Lebesgue measure on $\mathbb{R}^d$.

  Then for $\mu$-a.e.\ $x\in X$ and each $n\ge 0$:
  \begin{enumerate}
    \item There exists a convex function $\Psi_x^n : \mathbb{R}^d \to \mathbb{R}$ such that the optimal transport map
    \[
    T_x^n := \nabla \Psi_x^n
    \]
    pushes $\rho^n_{\tau,x}$ forward to $\rho^{n+1}_{\tau,x}$, i.e.
    \[
    \rho^{n+1}_{\tau,x} = T_{x,\#}^n \rho^n_{\tau,x}.
    \]
    \item The Wasserstein geodesic between $\rho^n_{\tau,x}$ and $\rho^{n+1}_{\tau,x}$ is given by
    \[
    \rho^n_{\tau,x}(t)
    := \big((1-t)\mathrm{Id} + t T_x^n\big)_\# \rho^n_{\tau,x},
    \quad t\in[0,1],
    \]
    and satisfies the continuity equation
    \[
    \partial_t \rho^n_{\tau,x}(t) + \nabla\cdot\big(\rho^n_{\tau,x}(t)\, v_x^n(t)\big) = 0,
    \]
    for a suitable velocity field $v_x^n(t)$ derived from $T_x^n$.
  \end{enumerate}
  In particular, each discrete Tan--HWG update $\rho^n_\tau \mapsto \rho^{n+1}_\tau$ admits a realization as a Brenier optimal transport in each fibre, and the Hebbian dynamics can be interpreted as a potential-driven mass flow in the internal space.
\end{corollary}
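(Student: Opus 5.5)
The plan is to reduce the statement to Brenier's theorem and McCann's displacement interpolation, applied fibre by fibre. Theorem~\ref{thm:tan-hwg} supplies the measurable sequence $(\rho^n_\tau)_{n\ge0}$ with $\rho^n_\tau,\rho^{n+1}_\tau\in\mathcal X_\mu$.

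\emph{Step 1: reduction to a single fibre.} Since $\rho^n_\tau,\rho^{n+1}_\tau\in\mathcal X_\mu$, we have $\int_X m_2(\rho^n_{\tau,x})\,\mathrm d\mu(x)<\infty$. Hence for $\mu$-a.e.\ $x$ the fibre $\rho^n_{\tau,x}$ has finite second moment, and the same holds for $\rho^{n+1}_{\tau,x}$. Intersecting these full-measure sets with those on which the density hypothesis holds, over the countably many $n\ge0$, gives one $\mu$-null exceptional set. Outside it, fix $x$ and $n$, and write $\nu_0=\rho^n_{\tau,x}$ and $\nu_1=\rho^{n+1}_{\tau,x}$. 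Both lie in $\mathcal P_2(\mathbb R^d)$, and $\nu_0\ll\mathcal L^d$.

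\emph{Step 2: Brenier map.} By Brenier's theorem, absolute continuity of $\nu_0$ gives a convex, lower semicontinuous $\Psi^n_x:\mathbb R^d\to\mathbb R\cup\{+\infty\}$ with the following properties. It is finite and differentiable $\nu_0$-a.e. The map $T^n_x=\nabla\Psi^n_x$ satisfies $(T^n_x)_\#\nu_0=\nu_1$. The coupling $(\mathrm{Id},T^n_x)_\#\nu_0$ is the unique optimal coupling in $\Pi(\nu_0,\nu_1)$.

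\emph{Main obstacle.} I expect the main obstacle to be the claim that $\Psi^n_x$ is real-valued on all of $\mathbb R^d$. In general Brenier's potential is finite only on the convex hull of the support of $\nu_0$. I would handle this by stating the result with $\Psi^n_x$ finite $\nu_0$-a.e., or by noting that when $\operatorname{supp}\nu_0=\mathbb R^d$ the potential is finite everywhere. Only $\nu_0$-a.e.\ values of $T^n_x$ matter for the pushforward, so this caveat is harmless. The density assumption on $\nu_1$ is not needed for item~1; it would only make $T^n_x$ invertible a.e.

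\emph{Step 3: geodesic.} For item~2, set $\nu_t=((1-t)\mathrm{Id}+tT^n_x)_\#\nu_0$. Optimality of $(\mathrm{Id},T^n_x)_\#\nu_0$ yields the constant-speed bound $W_2(\nu_s,\nu_t)\le |t-s|\,W_2(\nu_0,\nu_1)$. The triangle inequality then upgrades this to equality, so $(\nu_t)$ is a constant-speed $W_2$-geodesic. It is the unique geodesic, because in $\mathbb R^d$ geodesics correspond to optimal plans and the optimal plan is unique.

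\emph{Step 4: continuity equation.} Cyclical monotonicity of $\operatorname{graph}\nabla\Psi^n_x$ makes $T_t=(1-t)\mathrm{Id}+tT^n_x$ injective on a set of full $\nu_0$-measure for $t<1$; this is McCann's argument. This lets us define the velocity $v^n_x(t)=(T^n_x-\mathrm{Id})\circ T_t^{-1}$ $\nu_t$-a.e. For $\varphi\in C_c^\infty$ we have $\int\varphi\,\mathrm d\nu_t=\int\varphi(T_t(y))\,\mathrm d\nu_0(y)$. Differentiating in $t$ under the integral, justified by dominated convergence since $|T^n_x-\mathrm{Id}|\in L^2(\nu_0)$, gives $\frac{\mathrm d}{\mathrm dt}\int\varphi\,\mathrm d\nu_t=\int\nabla\varphi\cdot v^n_x(t)\,\mathrm d\nu_t$. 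This is the weak form of the continuity equation. A measurable dependence of $\Psi^n_x$ on $x$ is not required by the statement, so I would not pursue it beyond a remark invoking Lemma~\ref{lem:couplings-selection}.
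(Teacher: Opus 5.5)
Your proposal follows the paper's route: the paper proves the corollary by applying Brenier's theorem fibre by fibre and then calls the interpolation the standard McCann interpolation, and you supply the details of Steps 3--4 that the paper leaves implicit. The caveat you flag is genuine. For example, take $d=1$, $\rho^n_{\tau,x}$ uniform on $[0,1]$ and $\rho^{n+1}_{\tau,x}$ standard Gaussian: both have densities and finite second moments, but the monotone optimal map blows up at $1$, so no convex potential that is real-valued on all of $\mathbb R$ exists. Item~1 should therefore be read with $\Psi^n_x$ taking values in $\mathbb R\cup\{+\infty\}$, which, as you say, does not affect the pushforward or the rest of the argument.
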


\begin{proof}
  Fix $n\ge 0$ and $x\in X$ such that $\rho^n_{\tau,x}$ and $\rho^{n+1}_{\tau,x}$ admit densities with respect to the Lebesgue measure on $\mathbb{R}^d$.
  Since $\rho^n_{\tau,x} \in \mathcal{P}_2(\mathbb{R}^d)$ is absolutely continuous, Brenier's theorem applies and yields the claims.
  The conclusion follows for $\mu$-a.e.\ $x$ and all $n$.
\end{proof}

This is the standard McCann interpolation. Additional mathematical details on fixed points and stability of Tan--HWG dynamics are provided in Appendix~\ref{sec:fixed_points}.

While the Brenier–McCann representation provides a natural description of Tan--HWG updates in Euclidean internal spaces, the applications developed in this work rely on internal representations whose geometry is non-Euclidean. In particular, several of our examples use internal spaces $Y$ that are geodesic metric graphs. In this setting, optimal transport remains well defined, but the Brenier map is replaced by a geodesic interpolation along the branches of the graph.

\begin{corollary}[Geodesic realization of Tan--HWG updates on metric graphs]
  \label{cor:graph}
  Assume the setting of the $\tau$-Tan--HWG scheme, and let $Y$ be a compact geodesic metric graph endowed with its shortest-path distance. For each $n\ge 0$ and $\mu$-a.e.\ $x\in X$, let $\rho^n_{\tau,x},\rho^{n+1}_{\tau,x}\in\mathcal P_2(Y)$ be the fiberwise minimizers produced by the Tan--HWG update.

  Then for $\mu$-a.e.\ $x\in X$ and each $n\ge 0$:
  \begin{enumerate}
    \item There exists an optimal transport plan
    \[
      \pi_x^n \in \mathrm{Opt}\big(\rho^n_{\tau,x},\rho^{n+1}_{\tau,x}\big)
    \]
    for the quadratic cost on $Y$.
    \item The constant-speed Wasserstein geodesic between $\rho^n_{\tau,x}$ and $\rho^{n+1}_{\tau,x}$ is given by
    \[
      \rho^n_{\tau,x}(t)
      := (e_t)_\# \pi_x^n,
      \qquad t\in[0,1],
    \]
    where $e_t : \mathrm{Geo}(Y)\to Y$ evaluates a geodesic at time $t$.
    \item This interpolation satisfies the continuity equation on the graph,
    \[
      \partial_t \rho^n_{\tau,x}(t)
      + \nabla_{\!Y}\cdot\big(\rho^n_{\tau,x}(t)\, v_x^n(t)\big)=0,
    \]
    where $v_x^n(t)$ is the metric velocity field induced by the geodesics in the support of $\pi_x^n$.
  \end{enumerate}

  In particular, each discrete Tan--HWG update admits a realization as a geodesic mass flow on the metric graph $Y$, and the Hebbian dynamics can be interpreted as a redistribution of mass along shortest paths in the internal space.
\end{corollary}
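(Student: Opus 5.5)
The plan is to argue fiber by fiber, using only general optimal transport on geodesic Polish spaces, exactly as Corollary~\ref{cor:brenier} reduced to Brenier's theorem. Fix $n\ge0$ and take $x$ outside a $\mu$-null set. There, Theorem~\ref{thm:tan-hwg} guarantees that $\rho^n_{\tau,x}$ and $\rho^{n+1}_{\tau,x}$ are well-defined elements of $\mathcal P_2(Y)$. Since $Y$ is a compact metric graph with its shortest-path distance, it is a compact geodesic Polish space. The quadratic cost $d_Y^2$ is therefore continuous and bounded on $Y\times Y$.

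\emph{Item~1.} The set of couplings $\Pi(\rho^n_{\tau,x},\rho^{n+1}_{\tau,x})$ is tight, and indeed compact, because $Y\times Y$ is compact. The map $\pi\mapsto\int d_Y^2\,\mathrm d\pi$ is weakly continuous. So a minimizer $\pi^n_x$ exists by the direct method. If one wants the dependence on $x$ to be $\mu$-measurable, Lemma~\ref{lem:couplings-selection} provides a measurable selection $x\mapsto\pi^n_x$.

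\emph{Item~2.} The plan is to lift $\pi^n_x$ to a dynamical plan.
\begin{itemize}
\item On the compact geodesic space $Y$, the set $\mathrm{Geo}(Y)$ of constant-speed geodesics $[0,1]\to Y$ is a closed subset of $C([0,1];Y)$.
\item The endpoint map $(e_0,e_1):\mathrm{Geo}(Y)\to Y\times Y$ is continuous and surjective.
\item A measurable geodesic selection $\mathrm{GeoSel}:Y\times Y\to\mathrm{Geo}(Y)$ exists, again by Kuratowski--Ryll-Nardzewski applied to the closed-valued map $(y,y')\mapsto(e_0,e_1)^{-1}(y,y')$.
\item Set $\Pi^n_x:=\mathrm{GeoSel}_\#\pi^n_x$ and $\rho^n_{\tau,x}(t):=(e_t)_\#\Pi^n_x$. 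This is what the statement means by $(e_t)_\#\pi^n_x$.
\item The coupling $(e_s,e_t)_\#\Pi^n_x$ joins $\rho^n_{\tau,x}(s)$ to $\rho^n_{\tau,x}(t)$. Its cost is $|t-s|^2\int d_Y^2\,\mathrm d\pi^n_x$. Hence
\[
W_2\big(\rho^n_{\tau,x}(s),\rho^n_{\tau,x}(t)\big)\le|t-s|\,W_2\big(\rho^n_{\tau,x},\rho^{n+1}_{\tau,x}\big).
\]
\item The triangle inequality forces equality, so the curve is a constant-speed geodesic (cf.\ Villani, Ch.~7; AGS, Thm.~2.10).
\end{itemize}

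\emph{Item~3.} This is the step I expect to be the main obstacle, because ``$\nabla_Y\cdot$'' on a graph must first be given a meaning. I would interpret the continuity equation weakly: for every test function $f$ that is Lipschitz on $Y$ and $C^1$ on each closed edge,
\[
\frac{\mathrm d}{\mathrm dt}\int_Y f\,\mathrm d\rho^n_{\tau,x}(t)=\int_Y \langle \nabla f, v^n_x(t)\rangle\,\mathrm d\rho^n_{\tau,x}(t).
\]
The velocity is defined by disintegrating $\Pi^n_x$ with respect to $e_t$ and setting
\[
v^n_x(t)(y):=\mathbb E_{\Pi^n_x}\big[\dot\gamma_t\,\big|\,\gamma_t=y\big].
\]
On the interior of an edge, $\dot\gamma_t$ is a signed scalar of modulus $d_Y(\gamma_0,\gamma_1)$. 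At vertices, the set of times $t$ at which $\gamma$ sits at a vertex contributes nothing except for constant geodesics, for which $\dot\gamma=0$. The identity is then proved in two moves:
\begin{itemize}
\item differentiate $t\mapsto\int f(\gamma_t)\,\mathrm d\Pi^n_x(\gamma)$ under the integral, using dominated convergence with the bound $\mathrm{Lip}(f)\,d_Y(\gamma_0,\gamma_1)\in L^2(\Pi^n_x)$;
\item apply the chain rule $\frac{\mathrm d}{\mathrm dt}f(\gamma_t)=\langle\nabla f(\gamma_t),\dot\gamma_t\rangle$ for a.e.\ $t$ along each geodesic.
\end{itemize}
The moment bound $\|v^n_x(t)\|_{L^2(\rho(t))}\le W_2(\rho^n_{\tau,x},\rho^{n+1}_{\tau,x})$ then follows from Jensen's inequality. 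Alternatively, one can cite the AGS/Lisini characterization of absolutely continuous curves in $\mathcal P_2$ of a metric space, which gives the same weak continuity equation in terms of the metric derivative.

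Finally, the null sets where the fiberwise statements fail are countable in number, one for each $n$. Their union is therefore $\mu$-null, which yields the conclusion for $\mu$-a.e.\ $x$ and all $n$.
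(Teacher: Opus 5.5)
Your proposal is correct and follows the same route as the paper's proof: an optimal plan by compactness, interpolation along the geodesics selected by that plan, and the continuity equation obtained from this superposition of geodesics, all of which the paper simply asserts as standard. You additionally supply what the paper leaves implicit, namely the lift of $\pi^n_x$ to a dynamical plan through a measurable geodesic selection (needed because $(e_t)_\#\pi^n_x$ is not well-typed as written) and a weak formulation of the graph continuity equation with an explicit velocity field; note that this gives \emph{a} constant-speed geodesic, which is all one can claim, since uniqueness fails in general on graphs with cycles.
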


\begin{proof}
  Since $Y$ is a compact geodesic metric space, $(\mathcal P_2(Y),W_2)$ is geodesic and every pair of measures admits at least one optimal transport plan for the quadratic cost. Let $\pi_x^n$ be such a plan between $\rho^n_{\tau,x}$ and $\rho^{n+1}_{\tau,x}$.

  The geodesic interpolation
  \[
    \rho^n_{\tau,x}(t) := (e_t)_\# \pi_x^n
  \]
  is the standard McCann interpolation on geodesic spaces. It is well known that such interpolations satisfy a continuity equation with a velocity field given by the metric derivative of the underlying geodesics. The conclusion follows for $\mu$-a.e.\ $x$.
\end{proof}

\begin{remark}[Interpretation]
  The theorem guarantees the existence of the discrete sequence $(\rho^n)_n$, but it does not describe the mechanism by which one passes from $\rho^n$ to $\rho^{n+1}$. The corollary shows that this update is not a mere jump between two abstract measures: it can always be realized as a continuous displacement of mass along a Wasserstein geodesic. This continuous interpolation is essential for interpreting the Tan--HWG dynamics as an internal evolution of mass on the graph $Y$, rather than as a purely discrete update rule.
\end{remark}

\begin{remark}[Euclidean vs.\ graph geometry]
  In the Euclidean case $Y=\mathbb{R}^d$, optimal transport between absolutely continuous measures is induced by a unique map $\nabla\Psi$ for a convex potential $\Psi$ (Brenier's theorem). The Wasserstein geodesic is therefore obtained by pushing mass along straight lines in $\mathbb{R}^d$.

  In contrast, when $Y$ is a geodesic metric graph, optimal transport is generally not induced by a map but by an optimal plan supported on geodesics of the graph. Mass travels along shortest paths, which may branch or merge, and the Wasserstein geodesic is obtained by interpolating these paths. Thus, while the Euclidean case yields a ``potential flow'', the graph case yields a ``geodesic redistribution of mass'' along the combinatorial structure of $Y$.
\end{remark}

\paragraph{Mass on the edges and motivation for a leaf-level representation.}
In the Tan--HWG scheme on a metric graph $Y$, each fiberwise update $\rho^n_x \mapsto \rho^{n+1}_x$ is obtained by minimizing over the whole space $\mathcal P_2(Y)$. In particular, there is no reason for the iterates $(\rho^n_x)_n$ to remain supported on the leaves of $Y$: whenever transporting mass through the interior edges reduces the total cost, the optimal states $\rho^n_x$ may (and typically will) place positive mass on the segments of the graph.
By optimality of transport plans between discrete distributions, each state remains a discrete measure, but the \emph{movement} of mass occurs continuously along edges.
This phenomenon is intrinsic to the Wasserstein geometry on $Y$ and reflects the fact that the Tan--HWG update corresponds to a genuine displacement of mass in the internal space, rather than a purely discrete jump between leaf configurations.

For this reason, it is natural to introduce a \emph{leaf-level representation} of the dynamics, obtained by projecting the local memory states onto distributions on the set of leaves. The resulting projected evolution lives in the simplex $\mathcal P(Y_M)$ and provides a convenient observable description of the model. The next section formalises this idea in a general setting, leading to the geodesic projection principle in Principle~\ref{def:projection}.

\begin{remark}
  We write $\mathcal P_2(Y)$ for the $W_2$–Wasserstein space over $Y$, and $\mathcal P(Y_M)$ for the Euclidean simplex of probability measures over the finite observable space $Y_M$.
\end{remark}

\section{Extension of the dynamics to finite internal support systems}\label{sec:projection}

\paragraph{Unstability of Tan--HWG dynamics on finite support.}
Finite spaces $Y_M=\{y_i, \dots, y_M\}$ cannot be admissible as internal state spaces since they are not geodesic. A natural way to circumvent this is to lift $Y_M$ into an admissible space $Y$ i.e. $Y_M\subset Y$.
In this case, consider a memory set $(\rho^n)_{n\ge 0}$ following a Tan--HWG minimizing-movement scheme. Assume a state $\rho^n$ has all its supports on $Y_M$ i.e. for $\mu$-a.e.\ $x$, $\rho_x^n$ is a probability distribution with support exclusively on $Y_M$. In the general setting, nothing prevents the following states $(\rho^{n'})_{n' \ge n}$ to have supports beyond $Y_M$: the induced distribution dynamics is not stable on the finite support.

\subsection{Finite internal support systems}
We start by generalizing a simplex representation for distributions that have a finite support on an arbitrary internal state spaces $Y$.

\begin{definition}
  \label{def:finite-system}
  We say that a neural system is a finite neural system with $M$ internal degrees of freedom on $Y$, if, for each $x\in X$, its local memory state can be represented as a finitely supported probability measure
  \[
  \rho_x = \sum_{i=1}^M w_{i,x}\,\delta_{y_i},
  \]
  for some fixed family $\lbrace y_i \rbrace_{i\in \llbracket1,M\rrbracket}\subset Y$ and weights $w_x=(w_{1,x}, \dots w_{M,x})\in\Delta^{M-1}$.
\end{definition}

In such settings, memory fields $\rho$ admits a natural simplicial representation $w=\lbrace w_x\rbrace_{x\in X}$, with
\[
w_x=(w_{1,x}, \dots w_{M,x})\in\Delta^{M-1}.
\]

\begin{remark}[Intuition]
  Assume a finite neural system evolves according to a discrete $\tau$-Tan--HWG minimizing-movement scheme, where each $\rho^n$ admits a representation on the simplex. For a fiber $x$, the simplicial representation of the trajectory $(\rho^n_x)_{n\ge 0}$ would be a discontinuous set of points on the simplex (due to the finite timestep $\tau$ of the scheme). A continuous simplicial curve representation occurs when a continuous limit curve of the trajectory exists at the limit $\tau\to 0$. Two challenges arise:
  \begin{enumerate}
    \item first, representing $\rho^n$ on the simplex is not trivial when $\rho^n$ has not all its support on $Y_M$, which is generally the case;
    \item second, the existence of a limit curve when $\tau\to 0$ for a Tan--HWG minimizing-movement scheme is not guaranteed.
  \end{enumerate}

  The geodesic projection principle introduced below provides a way to overcome the first difficulty by defining a canonical projection of local memory states onto measures on a finite set.
\end{remark}

We present a projection principle based on geodesics in the next section.

\subsection{Geodesic projection principle}
\label{sec:geodesic-projection-principle}

We first define the notion of observable space and observable states on this space to represent accessible information about the internal state space $Y$.

\begin{definition}[Observable space and observable states]
  \label{def:observable}
  An \emph{observable space} is a finite (or countable) set of internal states $\mathcal S\subset Y$. An \emph{observable state} relative to $\mathcal S$ is a probability measure $\hat\rho \in \mathcal P(\mathcal S)$, that is, a probability distribution over the elements of $\mathcal S$. Such a state represents the coarse, externally accessible information about an internal distribution on $Y$.

  We denote by $\mathcal P_2(\mathcal S)$ the space of observable states relative to $\mathcal S$ when such observables are lifted to $\mathcal P_2(Y)$.
\end{definition}

This represents the idea that an agent has only access to a limited set of internal states. A neural network for instance, cannot ``see'' synaptic weights that are not tied to an actual synapse. They can only be influenced by accessible weights. When a probability measure in $\mathcal P_2(Y)$ (i.e. a synaptic strength profile) attributes weights outside $\mathcal S$, those weights remain inaccessible to the agent. A probability measure in $\mathcal P_2(Y)$ has to be projected into a probability measure in $\mathcal P(\mathcal S)$ (a simplex).

\begin{principle}[Stochastic geodesic projection principle]
  \label{def:projection}
  Let $\hat\rho,\hat\nu\in\mathcal P(\mathcal S)$ be two observable states relative to the observable space $\mathcal S$, and let $(\rho^t)_{t\in[0,1]}$ denote the constant-speed Wasserstein geodesic between their internal lifts $\rho,\nu\in\mathcal P_2(Y)$.

  For each $t\in[0,1]$, we define a random observable state $\hat\rho^t$ taking values in $\{\hat\rho,\hat\nu\}$ by
  \[
    \mathbb P(\hat\rho^t=\hat\rho)=1-t,
    \qquad
    \mathbb P(\hat\rho^t=\hat\nu)=t.
  \]
  The \emph{stochastic geodesic projection} of the local memory state $\rho^t$ onto the observable space $\mathcal S$ is then defined as the expectation
  \[
    \Pi_{\hat\rho\to\hat\nu}(\rho^t)
    := \mathbb E[\hat\rho^t].
  \]
\end{principle}

This is a stochastic projection. It expresses the fact that a mass particle travelling along the internal geodesic from $\rho$ to $\nu$ is observed at geodesic parameter $t$ as belonging to the origin state with probability $1-t$ and to the target state with probability $t$.

\begin{remark}[Affine form]
  Since $\hat\rho^t$ takes only the two values $\hat\rho$ and $\hat\nu$, its expectation is simply the convex combination
  \[
    \Pi_{\hat\rho\to\hat\nu}(\rho^t)
    = (1-t)\,\hat\rho + t\,\hat\nu.
  \]
  Thus the linear interpolation between observable states arises as the expectation of a binary random observation of the internal geodesic state.
\end{remark}

This projection acts as a bridge between the internal Wasserstein geodesic structure on $\mathcal P_2(Y)$ and the linear geodesic structure of the simplex $\mathcal P(\mathcal S)$ endowed with the Euclidean metric. In particular, it is the only admissible map that sends the internal constant-speed geodesic $(\rho^t)_{t\in[0,1]}$ to a constant-speed geodesic in $\mathcal P(\mathcal S)$. Indeed, since geodesics in the Euclidean simplex are exactly affine segments, any geodesicity-preserving projection must coincide with the linear interpolation $t\mapsto (1-t)\hat\rho + t\hat\nu$.

\begin{remark}[Relativity of the stochastic projection]
  The stochastic geodesic projection is not a global map $\mathcal P_2(Y)\to\mathcal P(\mathcal S)$. It is defined relatively to a pair of observable states $(\hat\rho,\hat\nu)$, via the Wasserstein geodesic between their internal lifts $(\rho,\nu)$. In other words, the projection is contextualized by $(\hat\rho,\hat\nu)$, which can be seen as a global signal (e.g.\ $S(\cdot, \phi):=\phi$ with context $\phi:=(\hat \rho, \hat \nu)$).
\end{remark}

\begin{remark}[Extension to combinatorial graphs]
  Any finite combinatorial graph $G=(V,E)$ can be canonically thickened into a metric graph $Y$ by replacing each edge $(i,j)\in E$ with an interval of length $\ell_{ij}>0$. This construction endows $Y$ with a geodesic metric, and therefore with a well-defined Wasserstein space $(\mathcal P_2(Y),W_2)$ on which the Tan--HWG update is formulated.

  The Tan--HWG minimization is then performed over $\mathcal P_2(Y)$, and the resulting iterates $(\rho^n)_{n\ge0}$ naturally live on the whole metric graph, not only on the vertices.

  In Section~\ref{sec:isotropic-en}, we will see an illustration where the signal plays the role of a target distribution at each step, stabilized by its frozen nature. This will lead to an update $\rho^{n+1}$ located on the geodesic between the previous state $\rho^n$ and the target frozen signal distribution $\nu^n$. The stochastic geodesic projection then gives a natural representation of observable weights on the combinatorial graph, given by a recursive relation detailed later.
\end{remark}

\begin{remark}[Perspectives]
  Beyond its conceptual interest, the framework developed here may have applications in areas where probability distributions supported on a finite set evolve dynamically. A rapidly growing line of work in machine learning studies continuous-time flows on the categorical simplex for generative modeling, distillation, or accelerated sampling \parencite{lipman2023flow,gat2024discreteflowmatching,liu2024sfm,hoogeboom2021argmax}. These approaches typically operate directly in the simplex equipped with a Riemannian or information-geometric structure, without modeling any latent geometry underlying the discrete states. In contrast, our construction provides a principled way to introduce such a latent geometry: a distribution evolves internally on a metric graph according to a Wasserstein gradient-flow scheme, and the observable categorical distribution arises through stochastic geodesic projection.

  This viewpoint could be useful in several ways. First, it offers a geometrically structured latent space for categorical flows, potentially improving interpretability or stability. Second, it provides a natural regularisation mechanism when the discrete states possess semantic or hierarchical relations representable as a graph. Finally, it suggests a new class of generative models in which the internal dynamics is Wasserstein-geodesic while the observable dynamics remains linear on the simplex. Investigating these directions lies beyond the scope of the present work but constitutes a promising avenue for future research.
\end{remark}

The next section will show how, in Tan--HWG minimizing-movement schemes, the signal plays the role of a ``target'' measure at each step, placing the update on the geodesic between the previous state and the target frozen signal.

\section{Quadratic energies and constant contraction update rule}\label{sec:quadratic}

In this section, we consider the case where the external state space is $Z=\mathcal{P}_2(Y)$, so that both local states and the global signal fiber components live in the same Wasserstein space. This setting preserves the full optimal-transport geometry and provides a natural benchmark for the framework. In this regime, we show how the global signal acts as an ``attractor'' in the sense of Principle~\ref{pcp:iso_attractor}, and that a stable, state-independent contraction update rule emerges (Theorem~\ref{thm:closed_form}). We apply the stochastic geodesic projection principle (Section~\ref{sec:projector}) and recover classical update rules as special cases of the framework (Sections~\ref{sec:closed-form} and~\ref{sec:mirror-descent}).

\subsection{$W_2$--isotropic energy and global signal as an attractor}
\label{sec:isotropic-en}

We show the attractor role of the global signal in the case of isotropic (or radial) energies.

\begin{definition}[$W_2$--isotropic Tan--HWG energy]
  \label{def:isotropic}
  Consider a local energy density depending only on the Wasserstein distance to the global signal fiber component $z\in \mathcal{P}_2(Y)$ i.e. a local energy density of the general form
  \[
  \mathcal E_x(\cdot,z) := \mathcal F_x\bigl(W_2(\cdot,z)\bigr).
  \]
  Assume $\mathcal E_x(\cdot,S_x(\rho,\phi))$ is such that it defines a Tan--HWG energy $E$ with global signal $S$. We say that such energy is a \emph{$W_2$--isotropic Tan--HWG energy}.
\end{definition}

\begin{remark}[Signal-driven isotropy]
  Isotropy is defined relative to the global signal, which serves as the reference point of the geometry. The energy treats all directions pointing toward the signal equivalently, so that the signal behaves as an attractor. This reflects the fact that isotropy is not an intrinsic property of the space, but a property induced by the signal that organises the geometry of the dynamics.
\end{remark}

When $\mathcal F_x$ is strictly increasing for $\mu$-a.e.\ $x$, decreasing the energy $E(\rho,\phi)$ forces the Wasserstein distances $W_2(\rho_x,S_x(\rho, \phi))$ to decrease fiberwise, so that local state $\rho_x$ is geometrically attracted towards the fiber component $S_x(\rho,\phi)$ of the global signal in the Wasserstein space for $\mu$-a.e.\ $x$. This yields the following principle.

\begin{principle}[Geometric attraction in the isotropic case]
  \label{pcp:iso_attractor}
  Assume $E$ is a $W_2$--isotropic Tan--HWG energy, with global signal $S$ and local energy density $\mathcal E$, with $\mathcal E_x(\rho_x, S_x(\rho, \phi))=\mathcal F_x\bigl(W_2(\rho_x,S_x(\rho, \phi))\bigr)$, with $\mathcal F_x$ strictly increasing for $\mu$-a.e.\ $x$. Then, along the Tan--HWG minimizing-movement scheme dynamics, for $\mu$-a.e.\ $x$, the local memory $\rho_x$ is attracted toward the global signal fiber component $S_x(\rho, \phi)$, in the sense that whenever the energy decrease is strict, one has
  \[
  E_{\phi^n, \rho^n}(\rho^{n+1}) < E_{\phi^n, \rho^n}(\rho^{n}) \quad\Longrightarrow\quad W_2(\rho^{n+1}_x, S_{\phi^n, \rho^n,\,x}) < W_2(\rho^{n}_x, S_{\phi^n,\rho^n,\,x})
  \]
  for $\mu$-a.e.\ $x\in X$.
\end{principle}

In Section~\ref{sec:W2quadratic}, we give an explicit example of such isotropic Tan--HWG energy i.e.\ satisfying conditions of Definition~\ref{def:tan-hwg-en}.

\subsection{$W_2$--anisotropic internal geometries}

If the internal space $Y$ is equipped with another distance $d_A$ that is bi-Lipschitz equivalent to the reference metric $d_Y$, then the associated Wasserstein distance $W_A$ is equivalent to $W_2$. In this case, an energy that is isotropic with respect to $W_A$ can be interpreted as a $W_2$--anisotropic energy. The anisotropy is therefore not intrinsic: it arises from expressing the Tan--HWG dynamics in a different but equivalent internal geometry.

In this sense, the ``anisotropic'' case is simply the Tan--HWG dynamics written with respect to a bi-Lipschitz deformation of the internal metric. The underlying geometric structure is unchanged; only the coordinate representation of the dynamics differs.

\begin{remark}
  Anisotropy therefore reflects the choice of internal coordinates rather than a change in the underlying variational structure. This viewpoint is consistent with the interpretation of observable deviations from affine dynamics as signatures of curvature in the internal geometry.
\end{remark}

\subsection{$W_2$--quadratic energy}\label{sec:W2quadratic}

Whenever $Y$ is CAT(0), the Wasserstein space $(\mathcal P_2(Y),W_2)$ is CAT(0) (see \parencite{Sturm2003I}), and the map $\eta\mapsto W_2^2(\eta,z)$ is geodesically convex for $z\in\mathcal P_2(Y)$. In particular, for any convex nondecreasing function $f_x:\mathbb R_+\to\mathbb R$, the functional
\[
\mathcal E_x(\cdot,z) := f_x\bigl(W_2^2(\cdot,z)\bigr)
\]
is geodesically convex along $W_2$-geodesics. This is the most natural case among isotropic energies, where we can guarantee such convexity.

\begin{proposition}
  \label{prop:compo}
  Assume that $(Y,d_Y)$ is a Polish CAT(0) space.
  Let $f:x\mapsto f_x$ be a measurable map with, for all $x\in X$, $f_x:\mathbb{R}^+\to\mathbb{R}\cup\{+\infty\}$ proper, lower semicontinuous, and convex nondecreasing, and such that
  \[
  f_x(0)<+\infty, \qquad \text{for $\mu$-a.e.\ $x$ and} \qquad (x\mapsto f_x(0))\in\mathrm L^1(X,\mu).
  \]
  Define the local energy density $\mathcal E$ by
  \[
  \mathcal{E}_x(\rho_x,z):=f_x\big(W_2^2(\rho_x,z)\big),
  \qquad z\in\mathcal{P}_2(Y).
  \]
  Then,
  for any measurable global signal $S:\mathcal X_\mu\times\mathcal M(\Gamma)^X\to\mathcal X_\mu$, the energy
  \[
  E(\rho,\phi)
  :=\int_X \mathcal{E}_x\big(\rho_x,S_x(\rho,\phi)\big)\,\mathrm d\mu(x)
  \]
  is a locally sequentially stable Hebbian energy, hence a Tan--HWG energy. We call such energy a \emph{$W_2$--quadratic Tan--HWG energy}.
\end{proposition}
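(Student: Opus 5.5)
The plan is to verify the five conditions of Definition~\ref{def:loc-seq-stab} and then invoke Proposition~\ref{prop:loc-seq-stab}. Hebbianity is immediate: the energy has the integral form of Definition~\ref{def:hebbian} with $Z=\mathcal P_2(Y)$. The required joint measurability of $(x,\eta,z)\mapsto f_x(W_2^2(\eta,z))$ follows from three facts: $W_2$ is continuous on $\mathcal P_2(Y)\times\mathcal P_2(Y)$, the map $f$ is measurable in $x$, and each $f_x$ is lower semicontinuous. Together these make the composite a normal-integrand-type map, hence Borel.

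Now fix a reference configuration and a fiber value $z=S_x(\rho^{\mathrm{ref}},\phi^{\mathrm{ref}})\in\mathcal P_2(Y)$.

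\emph{Properness.} Since $f_x$ is nondecreasing, $\mathcal E_x(z,z)=f_x(0)<+\infty$, so $\mathcal E_x(\cdot,z)$ is not identically $+\infty$. Also $\mathcal E_x(\cdot,z)\ge f_x(0)>-\infty$.

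\emph{Lower bound.} The same monotonicity argument gives the bound with $c(x,z):=f_x(0)$. This function is independent of $z$ and lies in $\mathrm L^1(X,\mu)$ by hypothesis.

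\emph{Lower semicontinuity.} If $\eta_k\to\eta$ in $W_2$, then $W_2^2(\eta_k,z)\to W_2^2(\eta,z)$. Lower semicontinuity of $f_x$ then gives $f_x(W_2^2(\eta,z))\le\liminf_k f_x(W_2^2(\eta_k,z))$.

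\emph{Existence of a finite-energy field.} Take $\rho:=S(\rho^{\mathrm{ref}},\phi^{\mathrm{ref}})$ itself. By assumption $S$ maps into $\mathcal X_\mu$, so $\rho$ is an admissible field. Its energy is $\int_X f_x(0)\,\mathrm d\mu<\infty$.

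\emph{Geodesic convexity} ($\lambda=0$) is the substantive step. The ingredient is Sturm's result, quoted just before the proposition: for $Y$ CAT(0), the space $(\mathcal P_2(Y),W_2)$ is CAT(0). In a CAT(0) space, $g(\eta):=W_2^2(\eta,z)$ satisfies the comparison inequality
\[
g(\eta^t)\le(1-t)g(\eta^0)+tg(\eta^1)-t(1-t)W_2^2(\eta^0,\eta^1)
\]
along the (unique) geodesic. In particular $g$ is convex along geodesics; it is even $2$-convex, though $\lambda=0$ is all we need. Composing with $f_x$, which is convex and nondecreasing on $\mathbb R_+$, gives
\[
f_x(g(\eta^t))\le f_x\bigl((1-t)g(\eta^0)+tg(\eta^1)\bigr)\le(1-t)f_x(g(\eta^0))+tf_x(g(\eta^1)).
\]
The first inequality uses monotonicity and the second uses convexity. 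The convention $+\infty$ is harmless here, since convexity of an extended-valued proper $f_x$ handles infinite values on the right-hand side. So $\mathcal E_x(\cdot,z)$ is $0$-geodesically convex.

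I expect the main obstacle to be this convexity step. Its validity rests entirely on the CAT(0) property of $\mathcal P_2(Y)$, and I would double-check the cited statement rather than reprove it. If needed, an alternative route uses a plan-based interpolation: for a geodesic induced by an optimal plan $\gamma$ between $\eta^0,\eta^1$ and an optimal plan to $z$, glue the plans and apply the pointwise CAT(0) inequality $d^2(\gamma_t,w)\le(1-t)d^2(\gamma_0,w)+td^2(\gamma_1,w)-t(1-t)d^2(\gamma_0,\gamma_1)$. Integrating this yields convexity of $W_2^2(\cdot,z)$ along that geodesic. This generalized-geodesic-free argument suffices for the AGS framework.

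With all five conditions verified, Proposition~\ref{prop:loc-seq-stab} gives sequential stability, so $E$ is a Tan--HWG energy by Definition~\ref{def:tan-hwg-en}.
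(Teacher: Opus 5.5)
Your outline matches the paper's proof step for step: check the five conditions of Definition~\ref{def:loc-seq-stab}, then invoke Proposition~\ref{prop:loc-seq-stab}. The measurability, properness, lower-bound, lower-semicontinuity and finite-energy steps are fine.

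\textbf{The gap.} It is exactly the step you flagged, and the paper's proof rests on the same claim. $(\mathcal P_2(Y),W_2)$ is \emph{not} CAT(0) for CAT(0) $Y$ in general. Sturm's paper is about barycenters of measures on nonpositively curved spaces. By contrast, $\mathcal P_2(\mathbb R^d)$ with $d\ge 2$ has nonnegative, not nonpositive, Alexandrov curvature, and $W_2^2(\cdot,z)$ is not $\lambda$-convex along geodesics for any $\lambda$ \parencite{ags}.

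Explicitly, take $Y=\mathbb R^2$, $f_x(r)=r$, a constant signal $S\equiv z$, and
\[
\eta^t:=\tfrac12\bigl(\delta_{(2,0)}+\delta_{(0,2t-1)}\bigr),\qquad z:=\tfrac12\bigl(\delta_{(0,-1)}+\delta_{(0,1)}\bigr).
\]
Between $\eta^s$ and $\eta^u$, the coupling that fixes $(2,0)$ costs $2(u-s)^2\le 2$, while the other one costs at least $4$. Hence $(\eta^t)_{t\in[0,1]}$ is the unique $W_2$-geodesic from $\eta^0$ to $\eta^1$, each restriction is the unique geodesic between its endpoints, and $W_2^2(\eta^s,\eta^u)=2(u-s)^2$. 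Comparing the two matchings with $z$ gives
\[
g(t):=W_2^2(\eta^t,z)=\tfrac12\bigl(5+4\min\{t,1-t\}^2\bigr).
\]
So $g(0)=g(1)=\tfrac52<3=g(\tfrac12)$, and $g'$ jumps from $+2$ to $-2$ at $t=\tfrac12$. $\lambda$-convexity on all subsegments would make $t\mapsto g(t)-\lambda t^2$ convex, and this concave kink rules that out for every $\lambda\in\mathbb R$.

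Hence condition~3 fails, under either the ``some geodesic'' or the ``every geodesic'' reading. With $X$ a single point, the frozen energy is not $\lambda$-convex on $(\mathcal X_\mu,\mathcal W)$ either, so the proposition is false in the stated generality. The same two-atom construction produces a concave kink on a three-legged star. Also, $Y_M\times\mathbb C$ contains the convex copy $\{y_1\}\times\mathbb C$ of $\mathbb R^2$. So the internal spaces used later in the paper are affected as well.

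\textbf{Your fallback.} It cannot close the gap; the example shows nothing can. The gluing shows why: the three pairwise couplings of a $3$-plan $\pi(\mathrm dy_0,\mathrm dy_1,\mathrm dw)$ cannot all be optimal. If the $(y_0,y_1)$-coupling is optimal, so that you interpolate along a genuine $W_2$-geodesic, the CAT(0) inequality in $Y$ only yields
\[
W_2^2(\eta^t,z)\le(1-t)\int d_Y^2(y_0,w)\,\mathrm d\pi+t\int d_Y^2(y_1,w)\,\mathrm d\pi-t(1-t)\,W_2^2(\eta^0,\eta^1).
\]
The two integrals are merely $\ge W_2^2(\eta^0,z)$ and $\ge W_2^2(\eta^1,z)$, which is the wrong direction. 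If you instead make both couplings with $z$ optimal, you get $2$-convexity only along generalized geodesics based at $z$. Those are not $W_2$-geodesics between $\eta^0$ and $\eta^1$.

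\textbf{What survives.} One valid case is $Y$ a closed interval of $\mathbb R$. Quantile functions embed $\mathcal P_2(Y)$ isometrically onto a convex subset of $L^2(0,1)$, so $W_2^2(\cdot,z)$ is $2$-convex along geodesics and your composition with $f_x$ goes through verbatim. Otherwise condition~3 must be weakened. The fibrewise JKO step for this isotropic energy can still be solved directly along a geodesic from $\rho^n_x$ to $z$, using the triangle-inequality argument in the proof of Theorem~\ref{thm:closed_form}. That, however, is a different statement from the one claimed.
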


\begin{proof}
  $E$ is Hebbian in the sense of Definition~\ref{def:hebbian}. We prove that $E$ is locally sequentially stable (Definition~\ref{def:loc-seq-stab}):
  \begin{itemize}
    \item the local energy density is proper and lower semicontinuous since $f_x$ is proper, l.s.c.;
    \item geodesic convexity of the local energy density comes from the composition of the squared Wasserstein distance in the CAT(0) space with $f_x$ convex nondecreasing;
    \item the lower bound is ensured by taking $c(x, z)=f_x(0)$ and by monotony of $x\mapsto f_x\big(W_2^2(\rho_x,z)\big)$;
    \item Existence of a finite-energy field: for every reference configuration $(\phi^{\mathrm{ref}},\rho^{\mathrm{ref}})$, by taking $\rho=S(\rho^{\mathrm{ref}},\phi^{\mathrm{ref}})\in\mathcal X_\mu$ we have
    \[
    \int_X \mathcal E_x(\rho_x, S_x(\rho^{\mathrm{ref}},\phi^{\mathrm{ref}}))\,\mathrm d\mu(x)
    = \int_X f_x(0)\,\mathrm d\mu(x) < +\infty.
    \]
  \end{itemize}
  Then, by Proposition~\ref{prop:loc-seq-stab}, $E$ is sequentially stable, hence is a Tan--HWG energy.
\end{proof}

\begin{remark}[$CD(0,\infty)$]
  More generally, when $Y$ is equiped with a reference mesure $\mathfrak m_Y$, and $(Y, d_Y, \mathfrak m_Y)$ is a Polish space satisfying the synthetic curvature–dimension condition $CD(0,\infty)$ in the sense of Lott–Sturm–Villani~\parencite{LottVillani2009}, then $(\mathcal P_2(Y),W_2)$ is a geodesic space and the squared Wasserstein distance is geodesically convex along $W_2$-geodesics. In this case, Proposition~\ref{prop:compo} still holds. To our knowledge, this is the most general case where we can guarantee convexity of the isotropic energy. Other forms of $W_2$-isotropic Tan--HWG energies may exist: we leave the study of those for future work.
\end{remark}

\begin{remark}
  If each $f_x$ is $\lambda$-convex and nondecreasing on $\mathbb R_+$, with $\lambda\in(-\infty, 0)$ independent of $x$, then on any bounded subset $\{\rho : W_2(\rho,z)\le R\}$ the map $\rho\mapsto f_x(W_2^2(\rho,z))$ is $\lambda_R$--geodesically convex along $W_2$--geodesics, with $\lambda_R = 4R^2\lambda<0$. A global quantitative $\lambda$--geodesic convexity estimate, independent of $R$, would require stronger structural assumptions and is beyond the scope of this work.

  Nevertheless, in practice, it is enough to have such local $\lambda_R$--geodesic convexity on a bounded subset that is invariant under the JKO scheme. In that case, each minimisation step is well-posed, and the discrete-time dynamics is well-defined.
\end{remark}

\subsection{State-independent contraction factor}
\label{sec:closed_form}
Within the class of $W_2$--isotropic Tan--HWG energies, the following result shows that the quadratic case is the \emph{unique} situation in which the update reduces to a universal Wasserstein geodesic interpolation.

\begin{theorem}[State-independent contraction factor]
  \label{thm:closed_form}
  Let $Y$ be a CAT(0) Polish space, and $E$ a $W_2$--isotropic Tan--HWG energy in the sense of Definition~\ref{def:isotropic}, with signal $S:\mathcal X_\mu\times\mathcal M(\Gamma)^X\to\mathcal X_\mu$,
  \[
  E(\rho,\phi)
  = \int_X \mathcal{F}_x\big(W_2(\rho_x,S_x(\rho,\phi))\big)\,\mathrm{d}\mu(x).
  \]
  Assume $\mathcal{F}_x$ is a continuously differentiable and not constant for $\mu$-a.e.\ $x\in X$.
  Let $(\phi^n)_{n\ge 0}$ be a countable set of contexts.
  Let $\tau>0$.
  Assume the Tan--HWG scheme:
  \[
  \rho^{n+1}
  \in T^E_\tau(\rho^n\,|\,\phi^n)
  = \argmin_{\rho\in\mathcal{X}_\mu}
  \left\{
  \int_X \mathcal{F}_x\big(W_2(\rho_x,h^n_x)\big)\,\mathrm{d}\mu(x)
  + \frac{1}{2\tau} \int_X W_2^2(\rho_x,\rho_x^n)\,\mathrm{d}\mu(x)
  \right\},
  \tag{*}
  \]
  where at step $n$, $h^n:=S_{\phi^n,\rho^n}=S(\rho^n,\phi^n)$ is the frozen global signal.

  Then the Tan--HWG scheme (*) admits a geodesic update of the form
  \[
  \rho_x^{n+1}=\rho_{x,t_{\tau, x}}^n,
  \qquad W_2(\rho_{x,t_{\tau, x}}^n,\rho_x^n)=t_{\tau, x}\,W_2(\rho_x^n,h^n_x),
  \]
  for $\mu$-a.e.\ $x\in X$, where
  \begin{itemize}
    \item $(\rho_{x,t}^n)_{t\in[0,1]}$ is the $W_2$--geodesic from $\rho_x^n$ to $h^n_x$,
    \item and $t_{\tau, x}\in(0,1)$ is a \emph{state-independent contraction factor} i.e. independent of $\rho_x^n$ and $h_x^n$,
  \end{itemize}
  if and only if
  \[
  \mathcal{F}_x(r)=\frac{\alpha_x}{2}r^2+\mathrm{const},
  \qquad \text{with fiberwise constant }\alpha_x>0.
  \]
  We say that $E$ is an \emph{purely quadratic energy} with parameter $\alpha=\lbrace\alpha_x\rbrace_{x\in X}$.
  In particular, we have
  \[
  t_{\tau, x} = \frac{\alpha_x\tau}{1+\alpha_x\tau},
  \]
  and the purely quadratic Tan--HWG scheme is unique given an initial condition $\rho^0\in\mathcal X_\mu$.

  Among $W_2$--isotropic Tan--HWG energies, the purely quadratic case is the unique one that yields a state-independent Tan--HWG contraction factor on each fibre. In other words, only $W_2$-quadratic energies with local energy density affine in $W_2^2$ yield such fiberwise state-independent contraction factor $t_{\tau,x}$.
\end{theorem}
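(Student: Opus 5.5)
Our plan is to use Theorem~\ref{thm:tan-hwg} to reduce everything to a single fibre, and then to reduce each fibre to a one-dimensional problem along the geodesic towards the frozen signal. Fix $n$ and write $\eta:=\rho^n_x$, $h:=h^n_x$ and $D:=W_2(\eta,h)$. Step 2 of the proof of Theorem~\ref{thm:tan-hwg} shows that $\rho^{n+1}_x$ minimises
\[
G_x(\nu):=\mathcal F_x\big(W_2(\nu,h)\big)+\tfrac{1}{2\tau}W_2^2(\nu,\eta).
\]

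\textbf{Reduction to the geodesic.} We first show that minimisers may be taken on the geodesic $(\rho^n_{x,t})_{t\in[0,1]}$ from $\eta$ to $h$, which is unique because $(\mathcal P_2(Y),W_2)$ is CAT(0). Given any $\nu$, set $a:=W_2(\nu,h)$ and $b:=W_2(\nu,\eta)$, so that $a+b\ge D$ by the triangle inequality.
\begin{itemize}
  \item If $a\le D$, the geodesic point $\nu'$ with $W_2(\nu',h)=a$ satisfies $W_2(\nu',\eta)=D-a\le b$. Hence $G_x(\nu')\le G_x(\nu)$ once we know $\mathcal F_x$ is nondecreasing. Monotonicity has to be part of the reading of isotropy, or be derived in the necessity direction from the requirement that the update lie on the geodesic.
  \item If $a>D$, then $\nu=h$ is at least as good, since $G_x(h)=\mathcal F_x(0)+D^2/(2\tau)$ is no larger than $G_x(\nu)$.
\end{itemize}
Parametrising $\nu=\rho^n_{x,t}$, so that $W_2(\nu,h)=(1-t)D$ and $W_2(\nu,\eta)=tD$, the problem becomes
\[
g(t)=\mathcal F_x\big((1-t)D\big)+\tfrac{D^2t^2}{2\tau},
\]
minimised over $[0,1]$. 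For an interior minimiser, the first-order condition is
\[
\mathcal F_x'\big((1-t)D\big)=\frac{tD}{\tau}. \tag{$\dagger$}
\]

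\textbf{Sufficiency.} Take $\mathcal F_x(r)=\frac{\alpha_x}{2}r^2+c$. Then $g$ is a strictly convex quadratic, and ($\dagger$) becomes $\alpha_x(1-t)=t/\tau$, which gives $t=\alpha_x\tau/(1+\alpha_x\tau)\in(0,1)$, independent of $D$ and hence of $(\eta,h)$. For uniqueness in the full space, note that $\nu\mapsto \frac{\alpha_x}{2}W_2^2(\nu,h)+\frac1{2\tau}W_2^2(\nu,\eta)$ is a sum of two squared distances in a CAT(0) space, so it is $(\alpha_x+1/\tau)$-strongly geodesically convex and has a unique minimiser. This minimiser is the geodesic point found above. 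Measurability in $x$ follows from Lemma~\ref{lem:couplings-selection}, or directly from uniqueness. Induction on $n$ then gives uniqueness of the whole scheme from $\rho^0$.

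\textbf{Necessity, the main obstacle.} Suppose $t_{\tau,x}=:t$ is the same for all $\eta$ and $h$. The key requirement is that the CAT(0) space $Y$ be rich enough for $D=W_2(\eta,h)$ to range over an interval $(0,R)$; otherwise the claim is vacuous and must be excluded, for instance by assuming $Y$ is not a single point. Note that $Y$ geodesic and not a point gives Diracs at all distances up to $\operatorname{diam}Y$. Since $t\in(0,1)$ is interior, ($\dagger$) holds for every such $D$:
\[
\mathcal F_x'\big((1-t)D\big)=\frac{tD}{\tau}\quad\text{for all }D.
\]
Setting $r=(1-t)D$, this reads
\[
\mathcal F_x'(r)=\frac{t}{\tau(1-t)}\,r=:\alpha_x r,
\]
with $\alpha_x>0$ because $t\in(0,1)$. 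Integrating gives $\mathcal F_x(r)=\frac{\alpha_x}{2}r^2+\mathrm{const}$ on the range of distances attained. Solving $\alpha_x=t/(\tau(1-t))$ recovers $t=\alpha_x\tau/(1+\alpha_x\tau)$.

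The delicate points are:
\begin{itemize}
  \item justifying that the minimiser is interior, using $t_{\tau,x}\in(0,1)$ together with differentiability of $\mathcal F_x$;
  \item handling the $\mu$-null exceptional sets uniformly over the varying $(\eta,h)$. We would restrict the "for all states" quantifier to Dirac pairs to avoid measurability issues;
  \item extending the identity from the attained range to all of $\mathbb R_+$ when $Y$ is unbounded, or accepting the formula on $[0,\operatorname{diam}Y]$ otherwise.
\end{itemize}
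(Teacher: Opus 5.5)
Your route is the paper's: decouple fibrewise, restrict to the geodesic from $\rho^n_x$ to $h^n_x$, and use the first-order identity $\mathcal F_x'((1-t)D)=tD/\tau$ over all attainable $D$ to force $\mathcal F_x'(r)=\alpha_x r$. For sufficiency you minimise the quadratic $g$ and then show the geodesic point is a global minimiser.

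\textbf{The gap.} The case $a>D$ of your reduction fails as written. The point $h$ need not beat $\nu$, because $a>D$ only gives $b\ge a-D$, not $b\ge D$. Take $Y=\mathbb R$, $\eta=\delta_0$, $h=\delta_1$, $\nu=\delta_{-1/2}$ and $\mathcal F_x(r)=\frac{\alpha}{2}r^2$. Then $G_x(\nu)=\frac{9\alpha}{8}+\frac{1}{8\tau}<\frac{1}{2\tau}=G_x(h)$ as soon as $\alpha\tau<\frac13$.

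Your sufficiency argument identifies the unique global minimiser with the geodesic point only through this reduction, so it needs repair. There are two easy fixes:
\begin{itemize}
\item Compare with $\eta$ instead: $G_x(\eta)=\mathcal F_x(D)\le\mathcal F_x(a)\le G_x(\nu)$.
\item Follow the paper: use $b\ge|D-a|$ to get $G_x(\nu)\ge\Psi_D(a):=\mathcal F_x(a)+\frac{1}{2\tau}(D-a)^2$, then minimise $\Psi_D$ over all $a\ge0$. For quadratic $\mathcal F_x$ this covers both cases at once, with minimum at $a^\ast=(1-t_{\tau,x})D$.
\end{itemize}
Your concern about monotonicity hidden in isotropy is unnecessary. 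The case $a\le D$ needs no monotonicity, since the $\mathcal F_x$-term is unchanged. In sufficiency $\mathcal F_x$ is quadratic, hence nondecreasing on $\mathbb R_+$. Necessity needs no reduction at all, because the hypothesis already places the global minimiser at an interior point of the geodesic, so $g'(t_{\tau,x})=0$ follows directly.

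\textbf{Two smaller remarks.} First, your caveat about the attainable range of $D$ is sharper than the paper's ``for any $D$''. However, the identity only determines $\mathcal F_x'$ on $(0,(1-t_{\tau,x})\sup D)$. So for bounded $Y$ the quadratic form is forced only on $[0,(1-t_{\tau,x})\operatorname{diam}Y]$, not on $[0,\operatorname{diam}Y]$.

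Second, your uniqueness argument, like the paper's, rests on $(\mathcal P_2(Y),W_2)$ being CAT(0). So does your phrase ``the'' geodesic. This holds for $Y=\mathbb R$ but not for $Y=\mathbb R^2$. There, $\eta=\frac12(\delta_{(1,0)}+\delta_{(-1,0)})$ and $h=\frac12(\delta_{(0,1)}+\delta_{(0,-1)})$ admit two optimal plans with distinct $t$-intermediate points. Each attains the lower bound $\Psi_D(a^\ast)$, so each minimises the quadratic JKO functional. That part should therefore be stated as an explicit assumption rather than deduced from $Y$ being CAT(0).
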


\begin{proof}
  Fix $n$, and set $h^n:=S_{\phi^n,\rho^n}=S(\rho^n, \phi^n)\in\mathcal X_\mu$ the frozen signal. By the Hebbian structure and the definition of $E_{\phi^n,\rho^n}$, the JKO functional at step $n$ reads
  \[
  \mathcal{J}(\rho)
  = \int_X \left[
  \mathcal{F}_x\big(W_2(\rho_x,h^n_x)\big)
  + \frac{1}{2\tau}W_2^2(\rho_x,\rho_x^n)
  \right]\mathrm{d}\mu(x).
  \]
  The integral is a sum of independent fiberwise contributions, so the minimization decouples: for $\mu$-a.e.\ $x$, $\rho_x^{n+1}$ minimizes
  \[
  \mathcal J_x(\rho_x)
  :=\frac{1}{2\tau}W_2^2(\rho_x,\rho_x^n)
  + \mathcal{F}_x\big(W_2(\rho_x,h^n_x)\big)
  \]
  over $\mathcal{P}_2(Y)$.

  Assume
  \[
  \rho_x^{n+1}=\rho_{x,t_{\tau,x}}^n,
  \qquad
  W_2(\rho_{x,t_{\tau,x}}^n,\rho_x^n)=t_{\tau,x}\,W_2(\rho_x^n,h^n_x),
  \]
  for $\mu$-a.e.\ $x\in X$, where $(\rho_{x,t}^n)_{t\in[0,1]}$ is the $W_2$--geodesic from $\rho_x^n$ to $h^n_x$, and with a coefficient $t_{\tau,x}\in(0,1)$ \emph{independent of $\rho_x^n$}.
  Fix such $x$ and denote
  \[
  D := W_2(\rho_x^n,h^n_x),
  \]
  On the geodesic, we have $W_2(\rho_x^{n+1},\rho_x^n)=W_2(\rho_{x,t_{\tau,x}}^n,\rho_x^n)=t_{\tau,x} \,D$, and for all $t\in\left[0, 1\right]$
  \[
  W_2(\rho_{x,t}^n,\rho_x^n)=t\,D
  \quad \text{and }
  W_2(\rho_{x,t}^n,h^n_x)=(1-t)\,D.
  \]
  Since $\rho_x^{n+1}$ minimizes $\mathcal J_x$, it minimizes it along the geodesic $(\rho_{x,t}^n)_{t\in[0,1]}$ i.e. it minimizes
  \[
  f(t)
  :=\mathcal J_x(\rho_{x,t}^n)
  =\frac{1}{2\tau}W_2^2(\rho_{x,t}^n,\rho_x^n)
  + \mathcal{F}_x\big(W_2(\rho_{x,t}^n,h^n_x)\big)
  =\frac{1}{2\tau}t^2\,D^2
  + \mathcal{F}_x\big((1-t)\,D).
  \]
  The minimum is reached at $t=t_{\tau,x}$ where $\rho_{x,t}^n=\rho_x^{n+1}$, so, with $\mathcal F_x$ continuously differentiable, we have
  \[
  0=f'(t_{\tau,x})
  =\frac{t_{\tau,x}}{\tau}\,D^2
  -D \mathcal{F}_x'\big((1-t_{\tau,x})\,D).
  \]
  Since the coefficient $t_{\tau,x}\in(0,1)$ is \emph{independent of $\rho_x^n$}, this holds true for any $D= W_2(\rho_x^n,h^n_x)$.
  If $W_2(\rho_x^n,h^n_x)=0$ for all $x$ and all $n$, then $\rho_x^n=h^n_x$ for all $n$ and the trajectory is constant. In this trivial case the statement is empty, so we may assume that there exists at least one $x$ and one step $n$ such that $D := W_2(\rho_x^n,h^n_x) > 0$.
  And, for all $D>0$,
  \[
  \frac{t_{\tau,x}}{\tau}=\frac{\mathcal{F}_x'\big((1-t_{\tau,x})\,D)}{D}.
  \]
  This has to hold true for any $\rho^n$ and $h^n$, therefore for any distance $D=W_2(\nu,\nu')$ with $\nu\neq\nu'$ arbitrary, as well as any distribution on the geodesic between $\nu$ and $\nu'$, hence for any $\widetilde D:=t\,D$, $t\in[0,1]$.
  Thus $\frac{t_{\tau,x}}{\tau}-\frac{\mathcal{F}_x'\big((1-t_{\tau,x})\,D)}{D}=0$ on any segment $[0,D]$ for any $D$. Thus $\mathcal{F}_x'\big((1-t_{\tau,x})\,D)$ is linear in $D$ i.e. of the form
  \[
  \mathcal F_x'(r) = \alpha_x r,
  \]
  with $\alpha_x>0$ since $\mathcal F_x$ is convex non constant. We can compute $\alpha_x$ from
  \[
  \frac{t_{\tau,x}}{\tau}=\alpha_x (1-t_{\tau,x})
  \]
  yielding
  \[
  \alpha_x = \frac{t_{\tau,x}}{\tau(1-t_{\tau,x})}.
  \]
  Then integrating gives
  \[
  \mathcal{F}_x(r)=\frac{\alpha_x}{2}r^2+\mathrm{const}.
  \]

  Conversely, if $\mathcal{F}_x(r)=\frac{\alpha_x}{2}r^2+\mathrm{const}$, with $\alpha_x>0$, the JKO functional at step $n$ reads
  \[
  \mathcal{J}(\rho)
  = \int_X \left[
  \frac{\alpha_x}{2}\,W_2^2(\rho_x,h^n_x)
  + \frac{1}{2\tau}W_2^2(\rho_x,\rho_x^n)
  \right]\mathrm{d}\mu(x).
  \]
  The integral is a sum of independent fiberwise contributions, so the minimization decouples: for $\mu$-a.e.\ $x$, $\rho_x^{n+1}$ minimizes
  \[
  \mathcal J_x(\rho_x)
  :=\frac{1}{2\tau}W_2^2(\rho_x,\rho_x^n)
  + \frac{\alpha_x}{2}\,W_2^2(\rho_x,h^n_x)
  \]
  over $\mathcal{P}_2(Y)$.
  Fix $x$ and denote $D := W_2(\rho_x^n,h^n_x)$. Let $(\rho_{x,t}^n)_{t\in[0,1]}$ be the $W_2$--geodesic from $\rho_x^n$ to $h^n_x$. Along this geodesic we have
  \[
  W_2(\rho_{x,t}^n,\rho_x^n)=tD,
  \qquad
  W_2(\rho_{x,t}^n,h^n_x)=(1-t)D,
  \]
  so that
  \[
  f(t)
  :=\mathcal J_x(\rho_{x,t}^n)
  =\frac{1}{2\tau}t^2D^2+\frac{\alpha_x}{2}(1-t)^2D^2
  \]
  is a strictly convex quadratic polynomial in $t$. Its unique minimizer is
  \[
  t_{\tau,x}=\frac{\alpha_x\tau}{1+\alpha_x\tau},
  \]
  and the corresponding value is
  \[
  \min_{t\in[0,1]} f(t)
  = \mathcal J_x(\rho_{x,t_{\tau,x}}^n).
  \]
  We now show that this minimizer on the geodesic is actually a minimizer on all $\mathcal P_2(Y)$. For any $\rho_x$ with $r:=W_2(\rho_x,h^n_x)$, the triangle inequality gives
  \[
  W_2(\rho_x,\rho_x^n)\ge |D-r|,
  \]
  hence
  \[
  \mathcal J_x(\rho_x)
  \ge \frac{1}{2\tau}(D-r)^2+\frac{\alpha_x}{2}r^2
  =: \Psi_D(r).
  \]
  A direct computation shows that $\Psi_D$ is strictly convex in $r$ and attains its unique minimum at $r^\ast=(1-t_{\tau,x})D$, with
  \[
  \Psi_D(r^\ast)=\mathcal J_x(\rho_{x,t_{\tau,x}}^n).
  \]
  Therefore, for any $\rho_x$,
  \[
  \mathcal J_x(\rho_x)\ge \Psi_D(r)\ge \Psi_D(r^\ast)
  =\mathcal J_x(\rho_{x,t_{\tau,x}}^n).
  \]
  Hence,
  \[
  \min_{\rho_x\in\mathcal P_2(Y)}{\mathcal J_x(\rho_x)} \ge \mathcal J_x(\rho_{x,t_{\tau,x}}^n),
  \]
  so
  \[
  \rho_{x,t_{\tau,x}}^n \in \argmin_{\rho_x\in\mathcal P_2(Y)}{\mathcal J_x(\rho_x)}.
  \]
  Thus, the Tan--HWG scheme admits a solution of the form
  \[
  \rho_x^{n+1}=\rho_{x,t_{\tau,x}}^n,
  \qquad W_2(\rho_{x,t_{\tau,x}}^n,\rho_x^n)=t_{\tau,x}\,W_2(\rho_x^n,h^n_x),
  \]
  for $\mu$-a.e.\ $x\in X$, where $(\rho_{x,t}^n)_{t\in[0,1]}$ is the $W_2$--geodesic from $\rho_x^n$ to $h^n_x$, and with a coefficient
  \[
  t_{\tau,x} = \frac{\alpha_x\tau}{1+\alpha_x\tau} \in(0,1) \text{ independent of } \rho_x^n.
  \]
  This proves the converse.

  Given an initial condition $\rho^0\in\mathcal X_\mu$, the scheme is uniquely defined, since for each $x$ the functional $\mathcal J_x$ is strictly convex on $\mathcal P_2(Y)$ when $Y$ is CAT(0), hence admits a unique minimizer.

  This completes the proof.
\end{proof}

\begin{remark}[Geometric intuition]
  Quadratic energies are the only ones whose Wasserstein gradient flow naturally follows geodesics. Indeed, for a functional of the form $\rho\mapsto \tfrac12 W_2^2(\rho,z)$, the Wasserstein gradient is exactly the initial velocity of the geodesic from $\rho$ to $z$. As a consequence, the JKO update balances two geodesic velocities---one pointing toward the signal and one toward the previous state---and the minimizer lies at a fixed proportion along the geodesic. For any non-quadratic energy, the gradient is no longer aligned with the geodesic direction, and the update cannot reduce to a universal geodesic interpolation.
\end{remark}

\emph{Notation.}
For each fiber $x$, the parameter $\alpha_x$ is a constant depending only on $x$. We may write $\alpha:x\mapsto\alpha_x$. Similarly, the contraction factor $t_{\tau,x}$ depends only on $x$ and on the timestep $\tau$, and we may write $t_\tau(x)$.

\begin{remark}[Clustering and reduction to a constant parameter]
  \label{rmk:cluster}
  The Hebbian structure ensures that the scheme decouples fiberwise. We may therefore group together fibers that share the same local energy density. Such a group, or \emph{clusters}, is a subset $X'\subset X$ on which all $\mathcal F_x$ coincide. On each cluster, the parameter $\alpha_x$ is constant, say $\alpha>0$, and so is the contraction factor $t_{\tau,x}$, which we denote simply by $t_\tau$:
  \[
  t_\tau=\frac{\alpha\tau}{1+\alpha\tau}.
  \]
  In this case, the update can be written compactly at the field level as
  \[
  \rho^{n+1}=\rho^n_{t_\tau}.
  \]
  Unless otherwise stated, we will work within a single cluster, which allows us to simplify notation without loss of generality.
\end{remark}

Table~\ref{tab:quadratic-en} summarizes what we have seen so far in the present section, with the following inclusions:
\[
\{\text{Purely quadratic energies}\} \subset \{W_2\text{--quadratic energies}\} \subset \{W_2\text{--isotropic energies}\}.
\]
Purely quadratic energies form the rigid core of the isotropic class: they are the only ones whose gradient flow follows Wasserstein geodesics with a state-independent speed. Quadratic energies in $W_2^2$ retain convexity but lose this rigidity, while general isotropic energies preserve only the attractor structure.

\begin{table}[!hbtp]
  \centering
  \begin{tabularx}{\textwidth}{Y Y Y}
    \toprule
    \textbf{Energy class}
    & \textbf{Local density form}
    & \textbf{Main property} \\
    \midrule
    $W_2$--isotropic Tan--HWG
    & $\mathcal F_x \circ W_2$
    & $S$ is an attractor \\
    $W_2$--quadratic
    & $f_x\circ W_2^2$
    & Tan--HWG energy \\
    Purely quadratic
    & $\frac{\alpha}{2}\,W_2^2 + const$
    & $\rho^{n+1}=\rho^n_{t_\tau}$ \& uniqueness\\
    \bottomrule
  \end{tabularx}
  \caption{Summary of main properties by isotropic energy class.}
  \label{tab:quadratic-en}
\end{table}

\subsection{Stochastic Tan--HWG geodesic projector}\label{sec:projector}

In this section, we apply the stochastic geodesic projection principle seen in Section~\ref{sec:geodesic-projection-principle} along the trajectory of a purely quadratic Tan--HWG scheme.

Let $Y$ be a CAT(0) Polish space. Consider a finite neural system with $M$ internal degrees of freedom on $Y$. We denote by $Y_M\subset Y$ the family of internal states supporting the local memory states $\rho_x$ of the system.
Consider a family $(\rho^n)_{n\in\mathbb N}$ following a purely quadratic Tan--HWG scheme, with parameter $\alpha$, time step $\tau>0$, signal $S$ and associated frozen signal family $(h^n)_{n\in\mathbb N}\in\mathcal X_\mu^{\mathbb N}$ (for a countable set of contexts $(\phi^n)_{n\ge 0}$).
Assume that
\[
h^0_x\in \mathcal P_2(Y_M),
\qquad\text{and}\qquad
\rho^0_x=\rho_x\in \mathcal P_2(Y_M),
\]
i.e.\ the initial memory states and initial global signal are observable states relative to the observable space $Y_M$ in the sense of Definition~\ref{def:observable}.
Assume that for all $(\eta, \psi) \in \mathcal X_\mu\times\mathcal M(\Gamma)^X$
\[
\operatorname{supp}(S_x(\eta, \psi))\subset Y_M,
\]
i.e.\ the signal has values in the space of observable states $\mathcal P_2(Y_M)$, hence $h^n_x\in \mathcal P_2(Y_M)$ for all $n$.

\paragraph{Intuition.}
Theorem~\ref{thm:closed_form} yields that the update $\rho^{n+1}_x$ is on the constant-speed geodesic between $\rho^n_x$ and $h^n_x$, at parameter $t_\tau$ (we reduce to the study of clusters in the sense of Remark~\ref{rmk:cluster}). We apply stochastic geodesic projection principle to $\rho^{n+1}_x$ lying on the geodesic between $\rho^n_x$ and $h^n_x$. Then, by recursively applying such stochastic geodesic projections, we obtain a projected observable state relative to $Y_M$.

\bigskip
More precisely: for all $x\in X$, we denote by $(\mathcal S_{x,\tau}^n)_{n\in\mathbb N}$ the family of finite subsets of $Y$ defined recursively by
\[
\mathcal S_{x,\tau}^{n+1}:=\operatorname{supp}(\rho_x^{n+1}) \cup \mathcal S_{x,\tau}^n,
\qquad\text{and}\qquad
\mathcal S_{x,\tau}^0=Y_M.
\]
In particular,
\[
\operatorname{supp}(h_x^n)\subset\mathcal S_{x,\tau}^n,
\]
by stability of support under the signal map $S$.

Fix $x\in X$ and $n\in\mathbb N$. Let $\{y_1,\dots,y_{N_n}\}:=\mathcal S_{x,\tau}^n$ and let $\pi_x^n=(\pi_{kl}^n)_{1\le k,l\le {N_n}}$ be an optimal transport plan between $\rho_x^n$ and $h_x^n$.
For each pair $(y_k,y_l)$ with $\pi_{kl}^n>0$, we denote by
\[
  \gamma_{kl}(t) \qquad (t\in[0,1])
\]
the $W_2$--geodesic interpolation between $y_k$ and $y_l$.

\paragraph{Step 1. Couples generating a point.}
For any point
\[
  y \in \mathcal S_{x,\tau}^{n+1}\setminus \mathcal S_{x,\tau}^n,
\]
we define the set of generating couples, i.e.\ the pairs $(k,l)$ whose geodesic interpolation at time $t_\tau$ produces the new point $y$,
\[
  \zeta_{x,\tau}^n(y)
  := \big\{ (k,l)
      \;\big|\;
      \gamma_{kl}(t_\tau)=y
     \big\},
\]
as well as the sets of such couples originating from, and arriving to, a given $i$:
\[
  \mathcal T_i(y):=\big\{ l \;\big|\; \gamma_{il}(t_\tau)=y \big\},
  \qquad
  \mathcal O_i(y):=\big\{ k \;\big|\; \gamma_{ki}(t_\tau)=y \big\}.
\]

\begin{lemma}\label{lem:gen-couples}
  For any point $y \in \mathcal S_{x,\tau}^{n+1}\setminus \mathcal S_{x,\tau}^n$, and for all couple $(k,l)\in\llbracket 1, N_n \rrbracket^2$:
  \[
  (k,l)\in\zeta_{x,\tau}^n(y) \Longrightarrow k\neq l,
  \]
  And we have
  \[
    \bigcup_{y\in\mathcal S_{x,\tau}^{n+1}\setminus\mathcal S_{x,\tau}^n}
      \zeta_{x,\tau}^n(y)
    =
    \lbrace(k,l)\in\llbracket 1, N_n\rrbracket^2 \mid k\neq l\rbrace.
  \]
\end{lemma}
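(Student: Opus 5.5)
The plan is to read both claims through the explicit description of the update given by Theorem~\ref{thm:closed_form}. In a CAT(0) space geodesics are unique, and for finitely supported measures the $W_2$-geodesic from $\rho^n_x$ to $h^n_x$ is the pushforward $(e_t)_\#\pi^n_x$. Concretely,
\[
\rho^{n+1}_x=\sum_{k,l}\pi^n_{kl}\,\delta_{\gamma_{kl}(t_\tau)},
\]
where $\gamma_{kl}$ is the unique constant-speed geodesic of $Y$ from $y_k$ to $y_l$. Throughout, the couples in $\zeta^n_{x,\tau}(y)$, $\mathcal T_i(y)$ and $\mathcal O_i(y)$ are understood as those with $\pi^n_{kl}>0$, since only those have $\gamma_{kl}$ defined in the construction. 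In particular $\operatorname{supp}(\rho^{n+1}_x)=\{\gamma_{kl}(t_\tau)\mid \pi^n_{kl}>0\}$.

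\emph{First claim.} Argue by contraposition. If $k=l$, then $\gamma_{kk}$ is the constant curve at $y_k$, so $\gamma_{kk}(t_\tau)=y_k\in\mathcal S^n_{x,\tau}$. Hence $\gamma_{kk}(t_\tau)$ cannot equal any $y\in\mathcal S^{n+1}_{x,\tau}\setminus\mathcal S^n_{x,\tau}$, and so $(k,k)\notin\zeta^n_{x,\tau}(y)$. This gives the implication, and with it the inclusion ``$\subset$'' of the displayed identity.

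\emph{Reverse inclusion.} Take $k\neq l$ with $\pi^n_{kl}>0$. By the pushforward formula, $y:=\gamma_{kl}(t_\tau)$ belongs to $\operatorname{supp}(\rho^{n+1}_x)\subset\mathcal S^{n+1}_{x,\tau}$. It remains to show $y\notin\mathcal S^n_{x,\tau}$; then $(k,l)\in\zeta^n_{x,\tau}(y)$ for a new point $y$. Here I would use $t_\tau\in(0,1)$ from Theorem~\ref{thm:closed_form}: since $d_Y(y,y_k)=t_\tau d_Y(y_k,y_l)>0$ and $d_Y(y,y_l)=(1-t_\tau)d_Y(y_k,y_l)>0$, the point $y$ is a strict interior point of the geodesic segment $[y_k,y_l]$, distinct from both endpoints.

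\emph{Main obstacle.} The difficulty is excluding that this interior point coincides with some other previously recorded point $y_m\in\mathcal S^n_{x,\tau}$. Nothing in the CAT(0) structure alone forbids this; for instance, three collinear atoms in $\mathbb R^d$ with a suitable $t_\tau$ would violate it. I would therefore first try to exploit the setting in which the construction is meant to be used, namely the finite neural system on the star tree of Section~\ref{ex:example0}. There the observable atoms $Y_M$ are leaves, so every interior point of a leaf-to-leaf geodesic lies in the open edges or at the hub. For later steps, I would show by induction that the points added at step $n$ sit at distances from the hub that are distinct from those of all earlier points, using that these distances are multiplied by powers of $1-t_\tau$.

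If this induction fails in full generality, I would make the intended genericity explicit: the family $\{\gamma_{kl}(t_\tau)\}_{k\neq l,\ \pi^n_{kl}>0}$ avoids $\mathcal S^n_{x,\tau}$. This holds for all but countably many values of $t_\tau$, since each coincidence $\gamma_{kl}(t)=y_m$ occurs for at most one value of $t$ along a nondegenerate geodesic. Under that assumption the equality of the union with the set of off-diagonal couples follows immediately.
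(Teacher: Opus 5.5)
Your proof of the first claim and of the inclusion ``$\subset$'' is exactly the paper's argument ($\gamma_{kk}(t_\tau)=y_k\in\mathcal S^n_{x,\tau}$). For the reverse inclusion, the paper only writes that ``taking $y=\gamma_{kl}(t_\tau)$ suffices''. That silently assumes the two facts you isolate: $\pi^n_{kl}>0$, and $\gamma_{kl}(t_\tau)\notin\mathcal S^n_{x,\tau}$.

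Your suspicion is justified: the identity as written is false, so neither the paper's proof nor yours establishes it. The first point alone breaks it. If $\rho^n_x=h^n_x$, then $\mathcal S^{n+1}_{x,\tau}=\mathcal S^n_{x,\tau}$, so the left-hand side is empty while the right-hand side is not (as soon as $N_n\ge 2$). More generally, any off-diagonal couple with $\pi^n_{kl}=0$ is missing, since $\gamma_{kl}$ is only defined when $\pi^n_{kl}>0$. The most one can hope for is the right-hand side $\{(k,l)\mid k\neq l,\ \pi^n_{kl}>0\}$. You work toward that version tacitly, but you should say explicitly that the lemma must be corrected this way.

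Your collinear example shows the second point can fail too, e.g.\ $Y_M=\{0,\tfrac12,1\}\subset\mathbb R$, $t_\tau=\tfrac12$, $\rho^0_x=\delta_0$, $h^0_x=\delta_1$. That example even breaks the identity $\mathbb E^n_{x,\tau}(\rho^{n+1}_x)=(1-t_\tau)\rho^n_x+t_\tau h^n_x$ of Lemma~\ref{lem:expectation-op}.

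Where your proposal itself fails is in the two repairs. The claim that avoidance of $\mathcal S^n_{x,\tau}$ holds for all but countably many $t_\tau$ treats the atoms of $\mathcal S^n_{x,\tau}$ as fixed. In fact they are generated with the same $t_\tau$, and pairs of atoms get reused. If a couple $(y_k,y_l)$ carries mass under $\pi^n_x$ and again under $\pi^{n'}_x$ for some $n'>n$, then $\gamma_{kl}(t_\tau)\in\mathcal S^{n+1}_{x,\tau}\subset\mathcal S^{n'}_{x,\tau}$ for \emph{every} $t_\tau\in(0,1)$.

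Concretely, take $Y=\mathbb R$ with $Y_M=\{0,1\}$, $\rho^0_x=\delta_0$, $h^0_x=\tfrac12(\delta_0+\delta_1)$, $h^1_x=\delta_1$ (a purely context-driven signal is allowed). Then $\rho^1_x=\tfrac12(\delta_0+\delta_{t_\tau})$ and $\pi^1_x$ moves mass $\tfrac12$ from $0$ to $1$. But $\gamma_{01}(t_\tau)=t_\tau$ already lies in $\mathcal S^1_{x,\tau}$. On the star tree the same happens whenever a leaf-to-leaf couple is used twice, so no induction on hub distances can separate new points from old ones.

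Avoidance is therefore not a generic property. Either it must be imposed as an explicit hypothesis, under which your argument does prove the corrected identity. Or the construction must change, e.g.\ by indexing atoms by their generating couple (labeled particles) rather than by their location in $Y$. That makes the corrected identity tautological, and it is what the computation in Lemma~\ref{lem:expectation-op} actually uses.

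Separately, uniqueness of geodesics in $Y$ does not make the optimal plan unique. So $\pi^n_x$ must be chosen as a plan whose displacement interpolation passes through $\rho^{n+1}_x$, not just any optimal plan.
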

\begin{proof}
  Assume the first claim is not true, then $y=\gamma_{kk}(t_\tau)=y_k\in\mathcal S_{x,\tau}^n$, contradicting $y\notin\mathcal S_{x,\tau}^n$.

  Now fix $(k,l)\in \bigcup_{y\in \mathcal S_{x,\tau}^{n+1}\setminus\mathcal S_{x,\tau}^n} \zeta_{x,\tau}^n(y)$, then there exists $y\in \mathcal S_{x,\tau}^{n+1}\setminus\mathcal S_{x,\tau}^n$ such that $y=\gamma_{kl}(t_\tau)$ and by the first claim, $k\neq l$.

  Conversely, for any $k\neq l$ in $\llbracket 1, N_n\rrbracket$, taking $y=\gamma_{kl}(t_\tau)$ suffices to prove the converse inclusion.
\end{proof}

\paragraph{Step 2. Stochastic backward projector.}
Let $\nu\in\mathcal P_2(\mathcal S_{x,\tau}^{n+1})$.
We define a random measure $\hat\nu$ with values in $\mathcal P_2(\mathcal S_{x,\tau}^n)$, via a random partition of $\mathcal S_{x,\tau}^{n+1}$:
\[
\mathcal S_{x,\tau}^{n+1}=\bigsqcup_{i=1}^{N_n} \mathcal Y_i, \qquad \mathcal Y_i \cap \mathcal Y_j = \emptyset \text{ if } i\neq j,
\]
such that, $\hat\nu(\{y_i\})=\nu(\mathcal Y_i)$, and for all $y\in \mathcal S_{x,\tau}^{n+1}$:
\begin{itemize}
  \item if $y=y_i\in\mathcal S_{x,\tau}^n$, the mass $\nu(\{y_i\})$ stays at $y_i$ i.e.
  \[
  \mathbb P\left( y_i \in \mathcal Y_i \right)=1;
  \]
  \item if $y\in\mathcal S_{x,\tau}^{n+1}\setminus\mathcal S_{x,\tau}^n$, then:
  \[
  \mathbb P\left(y\in\mathcal Y_i\mid y\in\mathcal S_{x,\tau}^{n+1}\setminus\mathcal S_{x,\tau}^n\right)
  = (1-t_\tau)\,
  \displaystyle\sum_{l\in\mathcal{T}_i(y)}
    \frac{\pi_{il}^n}{\rho^{n+1}_x(\{y\})}
  + t_\tau\,
  \displaystyle\sum_{k\in\mathcal{O}_i(y)}
    \frac{\pi_{ki}^n}{\rho^{n+1}_x(\{y\})}.
  \]
\end{itemize}
The second item is well defined since $\rho^{n+1}_x(\{y\})\neq 0$ with $y\in\mathcal S_{x,\tau}^{n+1}\setminus\mathcal S_{x,\tau}^n\subset\operatorname{supp}(\rho_x^{n+1})$, and it corresponds to the following draws:
\begin{enumerate}
  \item we choose a pair $(k,l)\in\zeta_{x,\tau}^n(y)$ with probability
  \[
    \mathbb P\big((k,l) \text{ is chosen} \mid y\big)
    = \frac{\pi_{kl}^n}{\rho^{n+1}_x(\{y\})},
  \]
  i.e.\ the probability of selecting $(k,l)$ is the portion of mass transported along the geodesic $\gamma_{kl}$ that contributes to $y$;
  \item and we send the mass $\nu(\{y\})$ to
  \[
    y_k \text{ with probability } (1-t_\tau),
    \qquad
    y_l \text{ with probability } t_\tau,
  \]
  which corresponds to the stochastic geodesic principle.
\end{enumerate}

We call the map $\Pi^n_{x,\tau}:\nu\mapsto\hat\nu$, the \emph{stochastic backward projector}.

\paragraph{Step 3. Expectation operator.}
The deterministic \emph{expectation operator}
\[
  \mathbb E_{x,\tau}^n : \mathcal P_2(\mathcal S_{x,\tau}^{n+1})
  \longrightarrow \mathcal P_2(\mathcal S_{x,\tau}^n)
\]
is defined by
\[
  \mathbb E_{x,\tau}^n(\nu)
  := \mathbb E[\hat\nu].
\]

\begin{lemma}\label{lem:expectation-op}
  For every $y_i\in\mathcal S_{x,\tau}^n$, one has
  \[
  \mathbb E_{x,\tau}^n(\nu)(\{y_i\})
  = \nu(\{y_i\})
  + \sum_{y \in \mathcal S_{x,\tau}^{n+1}\setminus\mathcal S_{x,\tau}^n}
      \nu(\{y\})
      \sum_{(k,l)\in \zeta_{x,\tau}^n(y)}
        \Big[
          (1-t_\tau)\mathbf 1_{\{y_k=y_i\}}
          + t_\tau\mathbf 1_{\{y_l=y_i\}}
        \Big]
        \frac{\pi_{kl}^n}{\rho^{n+1}_x(\{y\})}.
  \]
  In particular $\mathbb E_{x,\tau}^n$ is a linear operator from $\mathcal P_2(\mathcal S_{x,\tau}^{n+1})$ to $\mathcal P_2(\mathcal S_{x,\tau}^n)$. For all $t\in[0,1]$, and all $(\nu, \nu')\in\mathcal P_2(\mathcal S_{x,\tau}^{n+1})\times\mathcal P_2(\mathcal S_{x,\tau}^{n+1})$,
  \[
    \mathbb E_{x,\tau}^n((1-t)\nu + t\nu')=(1-t)\mathbb E_{x,\tau}^n(\nu)+t\mathbb E_{x,\tau}^n(\nu').
  \]
  Moreover, for all $n$, $\mathbb E_{x,\tau}^n$ acts as the identity on $\mathcal P_2(Y_M)$, and
  \[
    \mathbb E_{x,\tau}^n(h_x^k)=h_x^k, \quad\text{for all } k,
  \]
  and
  \[
    \mathbb E_{x,\tau}^n(\rho_x^{n+1})
    = (1-t_\tau)\,\rho^n_x + t_\tau\,h^n_x.
  \]
\end{lemma}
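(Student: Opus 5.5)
The plan is to compute the expectation of $\hat\nu(\{y_i\})=\nu(\mathcal Y_i)$ directly from the definition of the random partition, using linearity of expectation. First, write
\[
\nu(\mathcal Y_i)=\sum_{y\in\mathcal S^{n+1}_{x,\tau}}\nu(\{y\})\,\mathbf 1_{\{y\in\mathcal Y_i\}},
\]
so that
\[
\mathbb E[\hat\nu(\{y_i\})]=\sum_{y}\nu(\{y\})\,\mathbb P(y\in\mathcal Y_i).
\]
Then split the sum according to whether $y\in\mathcal S^n_{x,\tau}$ or not. For $y=y_j\in\mathcal S^n_{x,\tau}$, the first rule gives $\mathbb P(y_j\in\mathcal Y_i)=\mathbf 1_{\{i=j\}}$, since the $\mathcal Y_j$ are disjoint. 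This produces the term $\nu(\{y_i\})$. For new points $y$, I will rewrite the prescribed probability as a sum over $\zeta^n_{x,\tau}(y)$. By definition, $l\in\mathcal T_i(y)$ iff $(i,l)\in\zeta^n_{x,\tau}(y)$, and $k\in\mathcal O_i(y)$ iff $(k,i)\in\zeta^n_{x,\tau}(y)$. So the two sums combine into
\[
\sum_{(k,l)\in\zeta}\Big[(1-t_\tau)\mathbf 1_{\{y_k=y_i\}}+t_\tau\mathbf 1_{\{y_l=y_i\}}\Big]\frac{\pi^n_{kl}}{\rho^{n+1}_x(\{y\})}.
\]
This gives the displayed formula.

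Linearity and affinity follow immediately, because the formula is linear in $\nu$ with coefficients independent of $\nu$. Well-definedness as a map into probability measures needs a mass check. Summing the bracket over $i$ gives $1$ for each pair. I then need
\[
\sum_{(k,l)\in\zeta(y)}\pi^n_{kl}=\rho^{n+1}_x(\{y\}).
\]
This follows from Theorem~\ref{thm:closed_form}: $\rho^{n+1}_x=(\gamma_{\cdot\cdot}(t_\tau))_\#\pi^n_x$ is the McCann interpolation at $t_\tau$. Hence the mass at a new point $y$ is exactly the $\pi$-mass of the pairs whose geodesic passes through $y$ at time $t_\tau$. Lemma~\ref{lem:gen-couples} ensures that diagonal pairs never contribute to new points.

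For the identity properties, observe that any $\nu$ supported on $\mathcal S^n_{x,\tau}\supset Y_M$ has no mass at new points. The formula then reduces to $\nu(\{y_i\})$, so $\mathbb E^n_{x,\tau}$ is the identity on $\mathcal P_2(Y_M)$, and more generally on $\mathcal P_2(\mathcal S^n_{x,\tau})$. This covers each $h^k_x$, since $h^k_x$ is supported in $Y_M$ by the support assumption on $S$.

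The last identity is the main computation. Take $\nu=\rho^{n+1}_x=\sum_{k,l}\pi^n_{kl}\delta_{\gamma_{kl}(t_\tau)}$. Pairs with $\gamma_{kl}(t_\tau)$ a new point $y$ contribute, after the factors $\rho^{n+1}_x(\{y\})$ cancel, exactly $\pi^n_{kl}\big[(1-t_\tau)\delta_{y_k}+t_\tau\delta_{y_l}\big]$. Diagonal pairs contribute $\pi^n_{kk}\delta_{y_k}$, which equals the same expression with $k=l$.

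The main obstacle is off-diagonal pairs whose interpolation point $\gamma_{kl}(t_\tau)$ happens to land on an old point of $\mathcal S^n_{x,\tau}$. Their mass stays put instead of being split between $y_k$ and $y_l$. I would handle this by noting that the construction implicitly assumes the generic situation of Lemma~\ref{lem:gen-couples}, in which every off-diagonal pair generates a new point. Under that reading, the union in that lemma covers all pairs with $k\ne l$. Granting this, summing over all $(k,l)$ gives
\[
(1-t_\tau)\sum_{k,l}\pi^n_{kl}\delta_{y_k}+t_\tau\sum_{k,l}\pi^n_{kl}\delta_{y_l}.
\]
By the marginal constraints of $\pi^n_x$, this equals $(1-t_\tau)\rho^n_x+t_\tau h^n_x$.
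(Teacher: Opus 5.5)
Your argument follows the paper's proof. You expand $\mathbb E[\nu(\mathcal Y_i)]$ by linearity and observe that measures carried by $\mathcal S^n_{x,\tau}$ are left fixed. You then obtain the last identity by splitting each pair's mass $\pi^n_{kl}$ into $(1-t_\tau)\delta_{y_k}+t_\tau\delta_{y_l}$ and reading off the marginals of $\pi^n_x$. Your mass check $\sum_{(k,l)\in\zeta^n_{x,\tau}(y)}\pi^n_{kl}=\rho^{n+1}_x(\{y\})$ is a worthwhile addition: it is what makes the random partition, and hence $\mathbb E^n_{x,\tau}$, well defined, and the paper leaves it implicit. It requires, as you tacitly assume, that $\pi^n_x$ be the optimal plan whose McCann interpolant at $t_\tau$ is $\rho^{n+1}_x$, not an arbitrary optimal plan.

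The step you grant is exactly where the paper's proof invokes Lemma~\ref{lem:gen-couples}. There it asserts that $\gamma_{kl}(t_\tau)$ cannot equal $y_i$ unless $k=l=i$, so that $\rho^{n+1}_x(\{y_i\})=\pi^n_{ii}$. That assertion is false in general, as is the converse inclusion of Lemma~\ref{lem:gen-couples}, and so is the final identity. Take $Y=\mathbb R$, a single fibre, $Y_M=\{0,1,2\}$, $\alpha\tau=1$ (so $t_\tau=\tfrac12$), $\rho^0_x=\delta_0$ and the constant signal $h^0_x=\delta_2$. Then $\rho^1_x=\delta_1$ and $\mathcal S^1_{x,\tau}=\mathcal S^0_{x,\tau}$. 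Hence $\mathbb E^0_{x,\tau}$ is the identity, and $\mathbb E^0_{x,\tau}(\rho^1_x)=\delta_1\neq\tfrac12\delta_0+\tfrac12\delta_2$.

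So genericity is not something the construction implicitly assumes; everything is perfectly well defined in this example. It is a genuine extra hypothesis: $\gamma_{kl}(t_\tau)\notin\mathcal S^n_{x,\tau}$ for every $k\neq l$ with $\pi^n_{kl}>0$. This is also the correct form of Lemma~\ref{lem:gen-couples}, restricted to pairs of positive mass. State it explicitly: under it your proof of the last identity is complete, and without it no proof is possible.
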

\begin{proof}
  The first claim is a direct calculation of the expected value of $\hat\nu$, and the second is a direct consequence.

  For all $\eta\in\mathcal P_2(Y_M)$, $\eta$ has support in $Y_M\subset\mathcal S_{x,\tau}^n$, we have $\eta(\{y\})=0$ for $y\notin\mathcal S_{x,\tau}^n$, hence $\mathbb E_{x,\tau}^n(\eta)=\eta$ by direct application of the first claim. In particular, since $h^k_x\in\mathcal P_2(Y_M)$ for all $k$, $\mathbb E_{x,\tau}^n(h_x^k)=h_x^k$.

  For every $y_i\in\mathcal S_{x,\tau}^n$,
  \begin{align*}
    \mathbb E_{x,\tau}^n(\rho^{n+1}_x)(\{y_i\})
    &= \rho^{n+1}_x(\{y_i\})
    + \sum_{\mathcal S_{x,\tau}^{n+1}\setminus\mathcal S_{x,\tau}^n}
        \frac{\rho^{n+1}_x(\{y\})}{\rho^{n+1}_x(\{y\})}
        \sum_{\zeta_{x,\tau}^n(y)}
          \Big[
            (1-t_\tau)\mathbf 1_{\{y_k=y_i\}}
            + t_\tau\mathbf 1_{\{y_l=y_i\}}
          \Big]
          \pi_{kl}^n\\
    &= \rho^{n+1}_x(\{y_i\})
    + \sum_{y \in \mathcal S_{x,\tau}^{n+1}\setminus\mathcal S_{x,\tau}^n}
        \sum_{(k,l)\in \zeta_{x,\tau}^n(y)}
          \Big[
            (1-t_\tau)\mathbf 1_{\{y_k=y_i\}}
            + t_\tau\mathbf 1_{\{y_l=y_i\}}
          \Big]
          \pi_{kl}^n.
  \end{align*}

  Since Lemma~\ref{lem:gen-couples} yields
  \[
    \bigcup_{y\in \mathcal S_{x,\tau}^{n+1}\setminus\mathcal S_{x,\tau}^n}
      \zeta_{x,\tau}^n(y)
    =
    \lbrace(k,l)\in\llbracket 1, N_n\rrbracket^2 \mid k\neq l\rbrace,
  \]
  the previous equality reduces to
  \begin{align*}
    \mathbb E_{x,\tau}^n(\rho^{n+1}_x)(\{y_i\})
    &= \rho^{n+1}_x(\{y_i\})
     + \sum_{k \neq l}
        \Big[
          (1-t_\tau)\mathbf 1_{\{y_k=y_i\}}
          + t_\tau\mathbf 1_{\{y_l=y_i\}}
        \Big]
        \pi_{kl}^n\\
    &= \rho^{n+1}_x(\{y_i\})
    + \sum_{l \neq i} (1-t_\tau)\,\pi_{il}^n
    + \sum_{k \neq i} t_\tau\,\pi_{ki}^n.
  \end{align*}

  By construction of $\pi_x^n$ and $\gamma_{(\cdot, \cdot)}$, and by Theorem~\ref{thm:closed_form}:
  \[
  \rho^n_x = \sum_{k=1}^{N_n}\sum_{l=1}^{N_n} \pi_{kl}^n\,\delta_{y_k},
  \qquad
  h^n_x = \sum_{l=1}^{N_n}\sum_{k=1}^{N_n} \pi_{kl}^n\,\delta_{y_l},
  \qquad
  \rho_x^{n+1}=\sum_{k=1}^{N_n}\sum_{l=1}^{N_n} \pi_{kl}^n\,\delta_{\gamma_{kl}(t_\tau)}.
  \]
  In particular, evaluating at $y_i$ yields
  \[
  \rho^n_x(\{y_i\}) = \sum_{l=1}^{N_n} \pi_{il}^n,
  \qquad
  h^n_x(\{y_i\}) = \sum_{k=1}^{N_n} \pi_{ki}^n,
  \qquad
  \rho^{n+1}_x(\{y_i\}) = \pi^n_{ii},
  \]
  since $\delta_{\gamma_{kl}(t_\tau)}(y_i)=0$ whenever $k\neq i$ and $l\neq i$, because $\gamma_{kl}(t_\tau)$ cannot coincide with $y_i$ unless $k=l=i$.
  Thus
  \begin{align*}
    (1-t_\tau)\,\rho^n_x(\{y_i\}) + t_\tau\,h^n_x(\{y_i\})
    &=(1-t_\tau)\,\sum_{l=1}^{N_n} \pi_{il}^n + t_\tau\,\sum_{k=1}^{N_n} \pi_{ki}^n\\
    &=\pi^n_{ii}
      + \sum_{l \neq i} (1-t_\tau)\,\pi_{il}^n
      + \sum_{k \neq i} t_\tau\,\pi_{ki}^n.
  \end{align*}
  Comparing with the previous evaluation of $\mathbb E_{x,\tau}^n(\rho^{n+1}_x)(\{y_i\})$ yields the final claim.
\end{proof}

Note that the restriction of each expectation operator $\mathbb E_{x,\tau}^n$ on $\mathcal P_2(Y_M)$ is the identity.
This technical construction ensures the well-posedness of the following definition.

\begin{definition}[Stochastic Tan--HWG geodesic projector]
  Consider the setting described above. The projector $\mathrm{Proj}_{x,\tau}^n: \mathcal P_2(\mathcal S_{x,\tau}^{n}) \longrightarrow \mathcal P_2(Y_M)$ defined for all $(x,n)\in X\times \mathbb N$ by
  \[
    \mathrm{Proj}_{x,\tau}^{n+1}:=\mathrm{Proj}_{x,\tau}^n\circ\mathbb E_{x,\tau}^n,
    \qquad \mathrm{Proj}_{x,\tau}^0 = \mathrm{Id}_{\mathcal P_2(Y_M)},
  \]
  is called a \emph{stochastic Tan--HWG geodesic projector} for the purely quadratic Tan--HWG scheme. We also denote
  \[
  \mathrm{Proj}_\tau:=(\lbrace \mathrm{Proj}_{x,\tau}^n \rbrace_{x\in X})_{n\in\mathbb N}.
  \]
\end{definition}

\begin{remark}[Relativity of projector and non uniqueness]
  A stochastic Tan--HWG geodesic projector is a family of maps $\mathcal P_2(\mathcal S_{x,\tau}^n) \rightarrow \mathcal P_2(Y_M)$ defined subsequently to a purely quadratic Tan--HWG scheme.

  The construction above show that such projector always exists for a purely quadratic Tan--HWG scheme. The stochastic Tan--HWG projector is unique exactly when both the optimal transport plan $\pi_x^n$ and all induced geodesics $\gamma_{kl}$ are unique. This is automatic in Euclidean spaces with quadratic cost when $\rho_x^n$ is absolutely continuous (Brenier), but not in general CAT(0) spaces. For the purpose of this paper, we do not require uniqueness of the projector. Since uniqueness is not guaranteed, we may write $p \in \mathrm{Proj}_\tau$ to denote the choice of such a projector.
\end{remark}

We have
\[
  \mathrm{Proj}_{x,\tau}^{n+1}=\mathbb E_{x,\tau}^0\circ \mathbb E_{x,\tau}^1\dots\circ\mathbb E_{x,\tau}^n,
\]
and
\[
\mathrm{Proj}_{x,\tau}^{n+1}(\rho^{n+1}_x)=(1-t_\tau)\mathrm{Proj}_{x,\tau}^{n}(\rho^n_x)+t_\tau\,h^n_x,
\]
since $\mathrm{Proj}_{x,\tau}^{n}(h^n_x)=h^n_x$ for $h^n_x\in \mathcal P_2(Y_M)$ with support in $Y_M\subset \mathcal S_{x,\tau}^n$. This is the key identity showing that the projected dynamics follows an exponential moving average of the signal (see Section~\ref{sec:closed-form}).

\subsection{Closed-form EMA observable dynamics}
\label{sec:closed-form}

The projector mechanism of the previous section induces a closed-form observable dynamics.

\begin{theorem}[Observable dynamics induced by stochastic geodesic projection]
  \label{thm:observable-dynamics}
  Let $Y$ be a CAT(0) Polish space, and $Y_M\subset Y$ the support of a finite neural system with $M$ internal degrees of freedom on $Y$. Consider a purely quadratic Tan--HWG energy $E$ with parameter $\alpha$, and signal $S:\mathcal X_\mu\times\mathcal M(\Gamma)^X\to\mathcal X_\mu$.

  Let $\tau>0$.
  Let $(\phi^n_\tau)_{n\ge 0}$ be a countable set of contexts.

  Assume $S_x(\mathcal X_\mu,\phi(t))\subset \mathcal P_2(Y_M)$ for all $t\ge 0$, and consider an initial state $\rho^0$ such that for all $x\in X$, $\rho^0_x\in \mathcal P_2(Y_M)$ (observability of signal and initial state).

  Let $(\rho^n_\tau)_{n\in\mathbb N}$ follow the $\tau$-Tan--HWG scheme relative to $E$ with initial condition $\rho^0$, under context $\phi$:
  \[
  \rho^{n+1}_\tau \in T^E_\tau(\rho^n_\tau\,|\,\phi^n_\tau), \qquad n\in\mathbb N.
  \]
  We denote $(h^n_\tau)_{n\in\mathbb N}$ the associated frozen signal family, and $\mathrm{Proj}_\tau=(\lbrace \mathrm{Proj}_{x,\tau}^n \rbrace_{x\in X})_{n\in\mathbb N}$ an associated stochastic Tan--HWG geodesic projector.

  The observable sequence $(\hat\rho^n_\tau)_{n\in \mathbb N}$ defined by $\hat\rho^{n}_{\tau, x} := \mathrm{Proj}_{x,\tau}^{n}(\rho^{n}_\tau)$ satisfies the affine recurrence
  \begin{equation}
    \label{eq:observable-recurrence}
    \hat\rho^{n+1}_\tau
    = (1-t_\tau)\,\hat\rho^n_\tau + t_\tau\,h^n_\tau.
  \end{equation}

  Consequently, the observable sequence admits the explicit closed-form expression
  \begin{equation}
    \label{eq:observable-closed-form}
    \hat\rho^n_\tau
    = (1-t_\tau)^n \hat\rho^0
    + t_\tau \sum_{k=0}^{n-1} (1-t_\tau)^{n-1-k} h^k_\tau.
  \end{equation}

  Assume $\rho^n_\tau$ and $\phi^n_\tau$ admit limit curves $\rho:t\to\rho(t)$ and $\phi:t\to\phi(t)$ when $\tau\to 0$ with $n\tau\to t$. In the continuous-time limit $\tau\to0$ with $n\tau\to t$, the observable converges to the solution of the ODE
  \begin{equation}
    \label{eq:observable-ODE}
    \frac{\mathrm d}{\mathrm dt}\,\hat\rho(t)
    = -\alpha\,\hat\rho(t) + \alpha\,h(t),
  \end{equation}
  namely
  \begin{equation}
    \label{eq:observable-ODE-solution}
    \hat\rho(t)
    = e^{-\alpha t}\hat\rho(0)
    + \alpha\int_0^t e^{-\alpha(t-s)} h(s)\,\mathrm ds,
  \end{equation}
  where $h(t)=S(\rho(t),\phi(t))$.
\end{theorem}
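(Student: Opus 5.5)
The proof has three parts, taken in order: the affine recurrence, the closed form, and the continuous-time limit. Throughout we work within a single cluster in the sense of Remark~\ref{rmk:cluster}, so that $\alpha$ and $t_\tau=\alpha\tau/(1+\alpha\tau)$ are constant; the general case follows fiberwise.

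\emph{Recurrence.} By Theorem~\ref{thm:closed_form}, for $\mu$-a.e.\ $x$ the update $\rho^{n+1}_{\tau,x}$ is the point at parameter $t_\tau$ on the $W_2$-geodesic from $\rho^n_{\tau,x}$ to $h^n_{\tau,x}$. This is exactly the situation of Section~\ref{sec:projector}: the observability assumptions on $\rho^0$ and on $S$ give $h^n_{\tau,x}\in\mathcal P_2(Y_M)$ for every $n$. We then apply $\mathrm{Proj}^{n+1}_{x,\tau}=\mathrm{Proj}^n_{x,\tau}\circ\mathbb E^n_{x,\tau}$ to $\rho^{n+1}_{\tau,x}$. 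By Lemma~\ref{lem:expectation-op}, $\mathbb E^n_{x,\tau}(\rho^{n+1}_{\tau,x})=(1-t_\tau)\rho^n_{\tau,x}+t_\tau h^n_{\tau,x}$. We then use two properties of $\mathrm{Proj}^n_{x,\tau}$: it preserves convex combinations, since it is a composition of the affine operators $\mathbb E^k_{x,\tau}$, and it fixes $\mathcal P_2(Y_M)$, since each $\mathbb E^k_{x,\tau}$ does. Together these give \eqref{eq:observable-recurrence} fiberwise, and hence at the field level.

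\emph{Closed form.} Formula \eqref{eq:observable-closed-form} follows from \eqref{eq:observable-recurrence} by a straightforward induction on $n$ (variation of constants for a linear recurrence), using $\hat\rho^0=\rho^0$ because $\mathrm{Proj}^0_{x,\tau}$ is the identity.

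\emph{Continuous limit.} This is the delicate step. Fix $t>0$ and take $n=n(\tau)$ with $n\tau\to t$. Since $t_\tau=\alpha\tau+O(\tau^2)$, we have $(1-t_\tau)^n=\exp\bigl(n\log(1-t_\tau)\bigr)\to e^{-\alpha t}$, which handles the first term. For the second, write the sum as a Riemann sum:
\[
t_\tau\sum_{k=0}^{n-1}(1-t_\tau)^{n-1-k}h^k_\tau=\sum_{k=0}^{n-1}\tau\,\frac{t_\tau}{\tau}\,(1-t_\tau)^{n-1-k}\,h^k_\tau .
\]
Here $t_\tau/\tau\to\alpha$, and $(1-t_\tau)^{n-1-k}$ converges to $e^{-\alpha(t-s)}$ uniformly in $s=k\tau\in[0,t]$. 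It remains to justify $h^k_\tau=S(\rho^k_\tau,\phi^k_\tau)\to h(s)$. This is the main obstacle, because it needs continuity of $S$ along the assumed limit curves, which the statement does not spell out. I would interpret the assumed existence of limit curves as giving $h^k_\tau\to h(s)$ pointwise in $s$, with the $h^k_\tau$ uniformly bounded as probability measures on the finite set $Y_M$, hence bounded in the simplex. These measures live in the Euclidean simplex $\mathcal P(Y_M)$, where the expressions are finite-dimensional affine combinations, so dominated convergence applies coordinatewise. This yields \eqref{eq:observable-ODE-solution}. Finally, \eqref{eq:observable-ODE-solution} is the Duhamel formula for \eqref{eq:observable-ODE}: differentiating it (for $h$ measurable and bounded, in the Carathéodory sense) gives the ODE.
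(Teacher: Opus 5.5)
Your proposal is correct and follows the paper's route. The recurrence comes from Lemma~\ref{lem:expectation-op} combined with the facts that $\mathrm{Proj}^n_{x,\tau}$ is affine and acts as the identity on $\mathcal P_2(Y_M)$, the closed form comes from iterating the recurrence, and the limit is the standard convergence of the discrete exponential filter. Your treatment of the limit is more careful than the paper's one-line appeal. The convergence $h^k_\tau\to h(s)$ that you flag, which needs continuity of $S$ along the limit curves, is not guaranteed by the statement, and the paper's argument relies on it without saying so.
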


\begin{proof}
  Applying recursively Lemma~\ref{lem:expectation-op} yields \eqref{eq:observable-recurrence}. Iterating the recurrence gives \eqref{eq:observable-closed-form}.

  The continuous-time limit follows from the standard convergence of the discrete exponential filter to the ODE \eqref{eq:observable-ODE}, whose solution is \eqref{eq:observable-ODE-solution}.
\end{proof}

Thus, in the purely quadratic case, the observable dynamics is an exponential filtering of the signal. In the general setting, this result holds by homogeneous cluster with corresponding parameters $\alpha(x)$ and contraction factors $t_\tau(x)$ (see Remark~\ref{rmk:cluster}).

\begin{remark}[Bridge OT-emerging stochastic process-exponential filtering]
  The observable dynamics \eqref{eq:observable-recurrence}--\eqref{eq:observable-ODE} can thus be interpreted as the macroscopic effect of a particle-wise stochastic geodesic projection along the optimal transport plans generated by the Tan--HWG update.

  At the observable level, this induces an exponential moving average (EMA) of the past signal values, with decay factor $(1-t_\tau)$.

  Conversely, any EMA dynamics on a simplex arises as the observable dynamics of the stochastic Tan--HWG geodesic projection applied to a purely deterministic Tan--HWG dynamics (convexity of the purely quadratic energy yields uniqueness of the minimizer at each update\footnote{This determinism is specific to the purely quadratic case: the strict convexity of the quadratic energy ensures uniqueness of the minimizer at each JKO step. For general $W_2$--isotropic energies, only weak or local convexity is available, and the JKO update may admit multiple minimizers, preventing a deterministic underlying dynamics.}). Since exponential moving averages are ubiquitous in signal processing, machine learning, and computational neuroscience, this geometric derivation reveals a surprisingly universal mechanism underlying a widely used update rule.

  To our knowledge, this is the first framework in which an OT-induced stochastic process yields an exact exponential moving average at the observable level.
\end{remark}

\begin{remark}[On the structural role of the contraction factor]
  A notable feature of the Tan--HWG update is that the contraction factor
  \[
  t_\tau=\frac{\alpha\tau}{1+\alpha\tau}
  \]
  is not an approximation of a continuous-time dynamics, but the \emph{exact} minimizer of the discrete variational problem associated with the quadratic Wasserstein energy
  \[
  \mathcal E(\rho_x,h_x)=\frac{\alpha}{2}W_2^2(\rho_x,h_x).
  \]
  As a consequence, the observable update
  \[
  \hat\rho^{n+1}_x=(1-t_\tau)\hat\rho^n_x+t_\tau h^n_x
  \]
  is \emph{exactly linear} in $\hat\rho_x$, with a contraction factor that is constant and independent of the state. This linearity is the direct reflection of the state-independent geodesic contraction established in Theorem~\ref{thm:closed_form}. Such a fixed-ratio interpolation along Wasserstein geodesics can only arise from a \emph{purely quadratic} isotropic energy, as shown in Theorem~\ref{thm:closed_form}.

  In the continuous-time limit, this yields the exponential filtering equation
  \[
  \frac{\mathrm d}{\mathrm dt}\,\tilde\rho_x=-\alpha(\tilde\rho_x-h_x),
  \]
  which therefore appears not as an external modeling assumption, but as the \emph{unique} macroscopic law compatible with the geometric structure of the purely quadratic Tan--HWG dynamics.

  Moreover, the exact expression of $t_\tau$ contrasts with the usual Euler-type approximations, where one replaces $t_\tau$ by $\alpha\tau$. Such approximations may lose stability (e.g.\ when $\alpha\tau>1$), whereas the Tan--HWG factor always satisfies $0<t_\tau<1$ for all $\tau>0$, ensuring \emph{unconditional stability} of the discrete dynamics. The Tan--HWG scheme is therefore not a numerical surrogate of a continuous equation, but a geometrically exact discrete mechanism, from which the continuous-time dynamics emerges only as a limit.

  Taken together, these observations provide an indirect but compelling confirmation of the relevance of the Wasserstein geometric framework: the most natural energy in this geometry yields the most natural observable plasticity law, and the discrete update is exact, stable, and structurally constrained by the geometry rather than by numerical choices.
\end{remark}

\begin{remark}[Markovian observable dynamics]
  Interestingly, the affine recurrence $\hat\rho^{n+1}_\tau = (1-t_\tau)\,\hat\rho^n_\tau + t_\tau\,h^n_\tau$ shows that, in the purely quadratic case, the observable dynamics is Markovian. This property is tied to the quadratic Wasserstein geometry and its step-independent contraction factor. For general isotropic energies, the observable update may depend on the full internal structure.
\end{remark}

\begin{remark}[Memory field interaction]
  This theorem allows for an interesting interaction mechanism: if we consider another memory field $\rho'$ following its own dynamics, we may take $h=\rho'\in\mathcal X_\mu$ as the signal for $\rho_x$. In this case, the observable expectation satisfies
  \[
    \frac{\mathrm d}{\mathrm dt}\,\mathbb E[\hat\rho_x(t)]
    = -\alpha\,\mathbb E[\hat\rho_x(t)]
      + \alpha\,\mathbb E[\hat\rho'_x(t)].
  \]
  Thus, $\rho_x$ tends to align with $\rho'_x$, regardless of the specific dynamics followed by $\rho'_x$.

  If, moreover, $\rho'_x$ follows the same purely quadratic Tan--HWG dynamics, the system becomes symmetric and we obtain
  \[
    \frac{\mathrm d}{\mathrm dt}\,
      \bigl(\mathbb E[\hat\rho_x(t)]-\mathbb E[\hat\rho'_x(t)]\bigr)
    = -2\alpha\,
      \bigl(\mathbb E[\hat\rho_x(t)]-\mathbb E[\hat\rho'_x(t)]\bigr),
  \]
  that is,
  \[
  \bigl(\mathbb E[\hat\rho_x(t)]-\mathbb E[\hat\rho'_x(t)]\bigr)
  = e^{-2\alpha\,t}\bigl(\mathbb E[\hat\rho_x(0)]-\mathbb E[\hat\rho'_x(0)]\bigr).
  \]
  The two observable fields therefore follow a consensus-type dynamics, exponentially aligning their distributions. Such alignment mechanisms are reminiscent of classical models of multi-agent consensus and synchronization \parencite{degroot1974reaching,olfati2004consensus,olfati2007consensus, jadbabaie2003coordination,ren2005consensus}.
\end{remark}

In Section~\ref{sec:continuous-limit}, we will see that, under additional regularity assumptions on the global signal $S$, and on the contexts, in the continuous-time limit $\tau\to 0$ with $n\tau\to t$, the internal Tan--HWG scheme admits a limiting curve (Theorem~\ref{thm:tan-hwg-continuous}), which can be interpreted as a perturbed Wasserstein gradient flow due to the frozen signal at each step. This will complete the previous theorem.

\subsection{Mirror descent on the simplex as a special case of Tan--HWG dynamics}
\label{sec:mirror-descent}

As an application example, we now show that mirror descent on the simplex is a special case of Tan--HWG observable dynamics.

\begin{proposition}[Mirror descent as a special case of Tan--HWG observable dynamics]
  Consider a neural network with input layer $Y_M=\{y_1,\dots y_M\}$, output layer $X_N=\{x_1, \dots x_N\}$, and weights $w=\{w_{i,j}\}_{(i,j)\in\llbracket 1, M\rrbracket\times\llbracket 1,N\rrbracket}$ such that, for all $i,j$:
  \[
  \sum_{k=1}^M w_{k,j} =1,
  \qquad\text{and}\qquad
  w_{i,j}\ge 0.
  \]
  Consider a differentiable loss function
  \[
    \mathrm{Loss} : \mathbb R^{M\times N} \to \mathbb R.
  \]
  Assume that there exists $C_{\text{max}}>0$ such that
  \[
  \left|\frac{\partial\mathrm{Loss}}{\partial w_{i,j}}\right|<C_{\text{max}},
  \qquad\text{for all } (i,j)\in\llbracket 1, M\rrbracket\times\llbracket 1,N\rrbracket.
  \]

  Then, the mirror descent update scheme on $w$ relative to $\mathrm{Loss}$
  \[
  w^{n+1}_{i,j}:=\frac{\tilde w^{n+1}_{i,j}}{\sum_k \tilde w^{n+1}_{k,j}},
  \qquad\text{with}\quad
  \tilde w^{n+1}_{i,j} = w^n_{i,j}\exp{\left( -\eta\,\frac{\partial\mathrm{Loss}(w^n)}{\partial w_{i,j}}\right)},
  \]
  is equivalent to a purely quadratic Tan--HWG observable scheme on the observable state $w$ relative to $Y_M$.
\end{proposition}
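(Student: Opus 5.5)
The plan is to realize the multiplicative-weights update as one step of the affine recurrence \eqref{eq:observable-recurrence} of Theorem~\ref{thm:observable-dynamics}. That requires choosing, for each column $j$, a frozen signal $h^n_{x_j}\in\mathcal P_2(Y_M)$ and a contraction factor $t_\tau\in(0,1)$ such that
\[
w^{n+1}_{\cdot,j}=(1-t_\tau)\,w^n_{\cdot,j}+t_\tau\,h^n_{x_j}.
\]
Solving this for $h$ forces
\[
h^n_{x_j}:=w^n_{\cdot,j}+\tfrac{1}{t_\tau}\bigl(w^{n+1}_{\cdot,j}-w^n_{\cdot,j}\bigr).
\]
The setting is $X=X_N$ with $\mu$ uniform, $Y$ the star-shaped CAT(0) tree of Section~\ref{ex:example0} with leaves $Y_M$, and $\Gamma=Y\times Y$. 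The context $\phi^n$ is the (real) measure that encodes the gradient $g^n_{i,j}=\partial\mathrm{Loss}(w^n)/\partial w_{i,j}$, for instance $\phi^n_{x_j}:=\sum_i g^n_{i,j}\,\delta_{(y_i,y_i)}$. The global signal $S_{x_j}(\rho,\phi)$ is then defined by the formula above, where $w^n$ is read off the leaf masses of $\rho_{x_j}$ and $w^{n+1}$ is computed from $\rho_{x_j}$ and $\phi_{x_j}$ by the exponential reweighting.

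The key step, and the main obstacle, is showing that $h^n_{x_j}$ is a genuine probability measure on $Y_M$. Its entries sum to one automatically, since both $w^n$ and $w^{n+1}$ are normalized. Nonnegativity is the real issue: one needs
\[
w^n_{i,j}\Bigl(1+\tfrac{1}{t_\tau}\bigl(\tfrac{e^{-\eta g_{i,j}}}{Z_j}-1\bigr)\Bigr)\ge 0,
\qquad Z_j=\sum_k w^n_{k,j}e^{-\eta g_{k,j}}.
\]
Here the bound $C_{\max}$ enters. Since $|g|<C_{\max}$, we have $Z_j\in(e^{-\eta C_{\max}},e^{\eta C_{\max}})$, so the ratio satisfies $e^{-\eta g_{i,j}}/Z_j\ge e^{-2\eta C_{\max}}$. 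It therefore suffices to pick
\[
t_\tau\ge 1-e^{-2\eta C_{\max}}\in(0,1),
\]
and then to set $\alpha=t_\tau/(\tau(1-t_\tau))$, which Theorem~\ref{thm:closed_form} permits for any $\tau>0$. This is a uniform, state-independent choice, which is what the purely quadratic structure requires. I would first check that the constant works with strict inequalities, so that $t_\tau<1$ strictly.

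Next I would verify the hypotheses of Theorem~\ref{thm:observable-dynamics}. The signal must take values in $\mathcal P_2(Y_M)$, which was just shown. The initial state $\rho^0_{x_j}=\sum_i w^0_{i,j}\delta_{y_i}$ is observable. Measurability of $S$ in $x$ is trivial because $X$ is finite. The energy $E=\int\tfrac{\alpha}{2}W_2^2(\rho_x,S_x)\,d\mu$ is a Tan--HWG energy by Proposition~\ref{prop:compo}.

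I then conclude by induction on $n$ that $\hat\rho^n_{x_j}=w^n_{\cdot,j}$. The recurrence \eqref{eq:observable-recurrence} gives
\[
\hat\rho^{n+1}=(1-t_\tau)\hat\rho^n+t_\tau h^n=w^{n+1},
\]
provided the signal is evaluated on the observable weights. This is a subtlety I would handle by defining $S$ through the projector, that is, as a function of $\mathrm{Proj}^n(\rho^n)$ rather than of the raw internal state, which may carry mass on edges. If this dependence on the projector is deemed illegitimate for a signal $S(\rho,\phi)$, the fallback is to put the current observable $w^n$ itself into the context $\phi^n$, again as atoms on $\Gamma=Y\times Y$, so that $S$ depends only on $\phi$.

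For the converse direction of the equivalence, note that the construction is invertible. Given the Tan--HWG observable sequence with this signal, the update $w^{n+1}=(1-t_\tau)w^n+t_\tau h^n$ reproduces exactly the normalized exponential update.
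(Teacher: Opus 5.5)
Your proposal is correct and follows the paper's proof essentially step for step: the same target $h^n=\frac{1}{t_\tau}\bigl(w^{n+1}-(1-t_\tau)w^n\bigr)$, the same bound $w^{n+1}_{i,j}\ge w^n_{i,j}e^{-2\eta C_{\max}}$ that forces $t_\tau$ to lie above $1-e^{-2\eta C_{\max}}$, the same choice $\alpha=t_\tau/(\tau(1-t_\tau))$, and the same induction giving $\hat\rho^n=w^n$ from the affine recurrence of Theorem~\ref{thm:observable-dynamics}. Of your two options for the signal, keep the fallback, which is what the paper does (its context $\varphi^{n+1}_j\otimes\varphi^n_j$ on $Y_M\times Y_M$ carries the successive observable states, so $S$ depends only on $\phi$); reading leaf masses off $\rho^n$, or routing through the history-dependent projector, does not give a map $S(\rho,\phi)$ valued in $\mathcal P_2(Y_M)$ for every $\rho\in\mathcal X_\mu$, which that theorem requires.
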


\begin{proof}
  \emph{Step 1: identification of weight matrices with memory fields.}
  For a vector $w_j=(w_{k,j})_{k=1\dots M}$ on the $M-1$ dimensional simplex $\Delta^{M-1}$, we identify such vector with the probability distribution
  \[
  \rho(w_j):=\sum_{k=1}^M w_{k,j} \delta_{y_k} \in\mathcal P_2(Y_M).
  \]
  This is the explicit equivalence $\Delta^{M-1}\simeq\mathcal P_2(Y_M)$.
  Then we identify $w=\{w_{i,j}\}_{(i,j)\in\llbracket 1, M\rrbracket\times\llbracket 1,N\rrbracket}$ with the memory field $\lbrace\rho(w_j)\rbrace_{j=1\dots N}$ (i.e.\ $(\Delta^{M-1})^N\simeq\mathcal X_\mu$ with $X=X_N$ finite, hence $\mathcal X=\mathcal X_\mu$ for finite reference measure $\mu$).

  \emph{Step 2: signal on the simplex.}
  Fix $\tau>0$ and set $\rho^0 = w^0$. For a given $n\in\mathbb N$, define
  \[
  h^n:=\frac{1}{t_\tau}\,\left( w^{n+1}-(1-t_\tau)w^n \right),
  \qquad t_\tau\in(0,1).
  \]
  Since $w^n_j$ and $w^{n+1}_j$ lie in the simplex, we immediately have
  \[
  \sum_{i=1}^M h^n_{i,j}
  =\frac{1}{t_\tau}\,\left( \sum_{i=1}^M w^{n+1}_{i,j}-(1-t_\tau) \sum_{i=1}^M w^n_{i,j} \right)
  =1.
  \]
  To ensure $h^n_{i,j}\ge 0$, we control the variation of $w^n$ under mirror descent. Using the update
  \[
  w^{n+1}_{i,j}
  =\frac{w^n_{i,j}\exp(-\eta\,\partial_{w_{i,j}}\mathrm{Loss}(w^n))}
    {\sum_k w^n_{k,j}\exp(-\eta\,\partial_{w_{k,j}}\mathrm{Loss}(w^n))},
  \]
  and the bound $|\partial_{w_{i,j}}\mathrm{Loss}|\le C_{\max}$, we obtain
  \[
  w^{n+1}_{i,j} \ge w^n_{i,j}\,\exp(-2\eta C_{\max}).
  \]
  Hence
  \[
  h^n_{i,j} = \frac{w^{n+1}_{i,j} - (1-t_\tau) w^n_{i,j}}{t_\tau}
  \ge \frac{w^n_{i,j}}{t_\tau}\Bigl(\exp(-2\eta C_{\max}) - 1 + t_\tau\Bigr).
  \]
  Therefore $h^n_{i,j}\ge 0$ for all $i,j$ provided
  \[
  t_\tau > 1 - \exp(-2\eta C_{\max}).
  \]
  Under this condition, each $h^n_j = (h^n_{i,j})_{i=1\dots M}$ lies in the simplex $\Delta^{M-1}\simeq\mathcal P_2(Y_M)$.~\footnote{For $\eta<1/(2C_\text{max})$, we can even take $t_\tau\in\left(2 \eta\,C_\text{max},1\right)$.}

  \emph{Step 3: observable dynamics.}
  Hence, Theorem~\ref{thm:observable-dynamics} applies for the purely quadratic Tan--HWG observable scheme with affine recurrence
  \[
  \hat \rho_j^{n+1}=(1-t_\tau)\hat \rho_j^n + t_\tau\,h^n_j,
  \]
  with parameter $\alpha:=\frac{t_\tau}{\tau\,(1-t_\tau)}>0$. Thus
  \[
  \hat \rho_j^{n+1}-(1-t_\tau)\hat \rho_j^n = w^{n+1}_j-(1-t_\tau)w^n_j,
  \]
  and with initial condition $\hat\rho^0=\rho^0=w^0$, this yields
  \[
  \hat \rho^n = w^n \qquad\text{for all } n\in\mathbb N.
  \]
  This proves the proposition.
\end{proof}

Note that, in this setting, the context at time $t$ can be defined as follows. For each discrete step $n$, define the empirical distribution on $Y_M$
\[
\varphi_j^n:=\sum_{i=1}^M w_{i,j}^n\,\delta_{y_i},
\qquad (j=1\dots N).
\]
We extend this piecewise-constantly in time by setting
\[
\varphi_j(t) := \varphi_j^n
\qquad\text{for } t\in[n\tau,(n+1)\tau).
\]
The context $\phi(t)=\{\phi_j(t)\}_{j=1\dots N}\in\mathcal M(Y_M\times Y_M)^{X_N}$ is then defined by
\[
\phi_j(t):=\varphi_j(t+\tau)\otimes\varphi_j(t)\in\mathcal M(Y_M\times Y_M),
\]
which encodes the pair of successive observable states on the input layer. With this choice, the Tan--HWG signal takes the explicit form
\[
S(\rho,\phi(t)):=\frac{1}{t_\tau}\,\left( \varphi(t+\tau)-(1-t_\tau)\varphi(t) \right),
\qquad t_\tau\in(0,1),
\]
so that the observable update
\[
\hat\rho^{n+1} = (1-t_\tau)\hat\rho^n + t_\tau\,h^n
\]
coincides exactly with the mirror-descent update on the simplex.

\begin{remark}[The role of the signal as a weight target]
  In the Tan--HWG observable dynamics, the signal $h^n$ acts as an instantaneous \emph{target} towards which the observable state is attracted through an exponential moving average. The mirror descent update shows that the gradient of the loss function plays exactly the same role: it determines the next iterate $w^{n+1}$, and therefore determines the target $h^n$ through the relation
  \[
    w^{n+1} = (1-t_\tau) w^n + t_\tau h^n.
  \]
  Thus, in this interpretation, gradient descent does not appear as a learning rule per se, but rather as a mechanism that generates a sequence of targets $(h^n)_n$ for the Tan--HWG dynamics.
\end{remark}

\begin{remark}[Generality of the Tan--HWG framework]
  Although in this proposition the signal $h^n$ is obtained from a mirror descent step, the purpose of the construction is not to suggest that one must compute a gradient in order to define a valid signal. Instead, the result shows that any mirror descent scheme on the simplex can be realized as a Tan--HWG observable dynamics. In particular, the Tan--HWG framework strictly contains gradient-based updates as a special case, while allowing for much more general classes of signals, including local, activation-dependent, or non-variational mechanisms.

  The identification above highlights that Tan--HWG dynamics provides a richer modeling space than classical gradient descent. The signal $h^n$ may be generated by biological or local plasticity rules, by external feedback, or by dynamics that do not derive from a global loss. Moreover, the Tan--HWG formalism naturally accommodates observable states represented by general probability measures rather than Dirac masses, allowing for distributed or uncertain synaptic amplitudes. In this sense, gradient descent corresponds to a highly constrained choice of signal within a broader and more flexible geometric framework.
\end{remark}

In the next section, we give examples of how our framework can extend classic neural network models.

\section{Generalized neural networks}\label{sec:networks}

In this section, we introduce a neural network architecture that extends classic feedforward neural networks. It illustrates how classic artificial neural networks can be seen as special cases of our framework, while introducing new features, in particular:
\begin{itemize}
  \item a distinction between structural weights (attention weights) and embedding weights (semantic or informational weights) (Section~\ref{sec:attention-weights}), which emerges from the framework applied at the synaptic connection level, recovering a form of attention mechanism;
  \item a synaptic embedding memory as a distribution, which naturally encodes multi-semanticity and stochasticity, combined with a time-dependent contextualisation (Remark~\ref{rmk:stochastic-embedding-time-dependent-context});
  \item a spectral memory, derived from the synaptic embedding memory, which allows phase synchronization and phase alignment dynamics, reminiscent of Kuramoto interactions~\parencite{Kuramoto1975} and Hebbian population alignments (Section~\ref{sec:spectral-memory});
  \item a multi-timescale mechanism ensuring controlled geodesic convexity of the energy, yielding a geometric interpretation of neuronal assemblies and their functional role in consolidation dynamics (Section~\ref{sec:multi-scale});
  \item a selection dynamics, with sparsity and structural pruning dynamics (Section~\ref{sec:parametric});
  \item a geometrically stable and robust dynamics, living in the Wasserstein space.
\end{itemize}
In the following, the finite set $Y_M$ is implicitly lifted to a CAT(0) Polish space as previously seen in Section~\ref{sec:projection}.

\subsection{General setting and external mapping}
So far, the framework models local memory states as positive, normalized weights. This is natural for probabilistic representations, but classical machine-learning architectures allow negative parameters, and biological neural systems critically rely on inhibitory synapses and phase synchronisation to regulate activity and shape their computational repertoire. Capturing such signed or oscillatory effects requires enriching the geometric structure of the model.

This motivates the introduction of \emph{geometric representation maps}, of the form
\[
\mathcal G: \mathcal X_\mu \to Z^X,
\]
which lift probability-valued memory fields into the product external state space $Z^X$, where inhibition, interference, and linear or oscillatory interactions can be represented. The cases $Z=\mathrm L^2(Y,\mathbb K)$, with $\mathbb K = \mathbb R$ or $\mathbb C$ are particularly of interest since they endow $Z$ with an inner product.

\paragraph{Product of internal state spaces.}
The countable product of internal state spaces
\[
Y=\prod_k Y^k, \qquad Y^k \text{ admissible internal state space},
\]
endowed with distance
\[
d_p(y, y')=\left(\sum_k d_{Y^k}(y^k,y'^k)^p\right)^{1/p}, \qquad p\in\mathbb N^\ast
\]
is an admissible internal state space (i.e.\ a geodesic Polish space). However, only the case $p=2$ yields a CAT(0) space (provided each $Y^k$ is CAT(0)). Hence in the following, we restrict to this case
\[
d_Y(y, y')=\left(\sum_k d_{Y^k}(y^k,y'^k)^2\right)^{1/2}.
\]
In this paper, we will consider example cases where $Y=Y_M\times\mathbb R$ and $Y=Y_M\times\mathbb C$, where $Y_M$ is implicitly lifted to a CAT(0) Polish space as previously seen in Section~\ref{sec:projection}.

\paragraph{External observables.}
We now give examples of generic forms of geometric representation maps, interacting naturally with the underlying Wasserstein geometry.

\begin{definition}[External observable map]
  We say that the geometric representation map $\mathcal G: \mathcal X_\mu \to Z^X$ is an \emph{external observable map} if it decomposes fiberwise as $\mathcal G(\rho)=\lbrace \mathcal G_x(\rho_x) \rbrace_{x\in X}$ with
  \[
    \mathcal G_x(\rho_x) = \int_Y V_x(y)\,\mathrm d\rho_x(y),
    \qquad\rho_x\in\mathcal P_2(Y),
    \qquad V_x:Y\to Z;
  \]
  with $x\mapsto V_x$ and $x\mapsto\mathcal G_x$ assumed $\mu$-measurable.

  The image of a memory state by such a representation map is called an \emph{external observable}.
\end{definition}

\begin{remark}[Affine projections vs geodesically affine projections]
  External observables of the form
  \[
    \mathcal G_x(\rho_x)=\int_Y V_x(y)\,d\rho_x(y)
  \]
  are affine in $\rho_x$, but they are not, in general, \emph{geodesically} affine in the Wasserstein space. This distinction is crucial: geodesic affinity (or at least controlled geodesic deviation) is key to build geodesically $\lambda$–convex energies by composing $\mathcal G_x$ with $\lambda$–convex Euclidean nonlinearities such as standard activation functions.

  When $V_x$ is $\lambda$-convex, the map $\mathcal G_x$ is geodesically $\lambda$-convex (it has a McCann potential energy form~\parencite{McCann1997}). However, $\lambda$–convexity alone does not guarantee that the composition with $\lambda$-convex Euclidean functions remains geodesically $\lambda$–convex. For this, one typically needs the stronger property that $\mathcal G_x$ be geodesically affine (or effectively affine along the relevant geodesics), in addition to being Lipschitz - a condition that generally fails.

  This issue becomes particularly acute for contextualized geometric maps introduced later (Section~\ref{sec:mapped-energy}), such as
  \[
    u(\rho)=\int z\,\psi(y')\,d\rho(y',z),
  \]
  which are never geodesically affine when transport occurs in the $y'$ component. As shown in Section~\ref{sec:multi-scale}, this obstruction can be overcome through a natural multi–timescale mechanism that restores effective geodesic affinity by separating fast and slow variables.
\end{remark}

\subsection{Generalized Neural Networks with real value weights}

\subsubsection{Internal synaptic space and attention weights}\label{sec:attention-weights}
Let $Y_M=\{y_1,\dots,y_M\}$ denote the finite input set, and $X_N=\{x_1, \dots x_N\}$ the finite output set.
We define the internal synaptic space as the product
\[
  Y := Y_M \times \mathbb R,
\]
endowed with the reference measure
\[
  \mathfrak m_Y := \frac{1}{M}\sum_{i=1}^M \delta_{y_i}\otimes \nu,
\]
where $\nu$ is a finite positive measure on $\mathbb R$.

Each output neuron $x_j$ is represented by a probability measure
\[
  \rho_{x_j} \in \mathcal P_2(Y),
  \qquad
  \rho_{x_j} = \sum_{i=1}^M p_{i,j}\,\delta_{y_i}\otimes\mu_{i,j},
\]
where:
\begin{itemize}
    \item $\sum_{i=1}^M p_{i,j}\,\delta_{y_i}$ is a probability distribution on $Y_M$, i.e.\ $p_{i,j}\ge 0$ and $\sum_{i=1}^M p_{i,j}=1$,
    \item $\mu_{i,j}\in\mathcal P_2(\mathbb R)$ is a \emph{synaptic amplitude distribution} associated with the synapse $(y_i,x_j)$.
\end{itemize}

\begin{definition}[Structural weights and synaptic embedding memory]
  The coefficients $p_{i,j}$, which describe the relative influence of the synapse from $y_i$ to $x_j$ within the network architecture, are called the \emph{structural weights} or \emph{attention weights}.

  The probability distribution $\mu_{i,j}$, encoding the internal synaptic state in the Wasserstein space, is called the \emph{synaptic embedding memory}.
\end{definition}

\begin{remark}[Interpretation]
  Each synapse $(i,j)$ stores two complementary forms of information:
  \begin{itemize}
    \item \emph{structural information}, given by $p_{i,j}$, which specifies the relative strength of the connection in the network topology;
    \item \emph{embedding information}, encoded in the synaptic embedding memory $\mu_{i,j}$, which represents the internal synaptic state, including its preferred amplitude, variability, and multi-semantic structure.
  \end{itemize}
\end{remark}

The synaptic embedding memory $\mu_{i,j}$ is a latent geometric representation of the synapse, from which observable weights arise as projections (see next section). This representation generalizes the classical setting, recovered when each $\mu_{i,j}$ is a Dirac mass.

\subsubsection{Synaptic amplitude samples and observable synaptic weights}
For each synapse $(i,j)$, we introduce the \emph{synaptic amplitude sample} in $\mathrm L^0(\Omega, \mathbb R)$ as the random variable following $\mu_{i,j}$:
\[
  Z_{i,j} \sim \mu_{i,j}.
\]
The \emph{observable synaptic weight} is the random variable defined by the product of the synaptic amplitude sample with the structural weight of the synapse
\[
  W_{i,j} := Z_{i,j}\,p_{i,j}\in\mathrm L^0(\Omega, \mathbb R),
\]
defining the \emph{observable weight}
\[
  W_j := \sum_{i=1}^M W_{i,j}\mathbf 1_{y_i}=\sum_{i=1}^M p_{i,j}\,Z_{i,j}\,\mathbf 1_{y_i}.
\]
The \emph{external observable weight} is given by the external observable
\[
  w_j(\rho_{x_j})
  := \int_{Y_M\times\mathbb R} \left(z\,\mathbf 1_{y'}\right)
  \,\mathrm d\rho_{x_j}(y',z)
  = \sum_{i=1}^M p_{i,j}\,\int_{\mathbb R} z \,\mathrm d\mu_{i,j} \,\mathbf 1_{y_i}
  = \sum_{i=1}^M p_{i,j}\,\mathbb E[Z_{i,j}] \, \mathbf 1_{y_i} \,\in \mathrm L^2(Y_M),
\]
which corresponds to the expectation of the observable weight (relative to the synaptic embedding memory). We denote the weighted first moment of $Z_{i,j}$ by
\[
  w_{i,j}(\rho_{x_j}):=p_{i,j}\mathbb E[Z_{i,j}].
\]

\begin{remark}[Excitatory/inhibitory representation]
  When the amplitude $Z_{i,j}$ is restricted to $Z_{i,j}\in\lbrace -1, +1\rbrace$, we have $\sum_{i=1}^M |W_{i,j}|=\sum_{i=1}^M p_{i,j}=1$: $Z_{i,j}$ gives the direction (the sign), and $p_{i,j}$ gives the relative weight. It generalizes the normalized weight representation $\lbrace p_{i,j}\rbrace_{i=1\dots M}$ by allowing positive and negative weights, representing excitatory or inhibitory synapses.

  More generally, with the equivalence $\mathbb R \simeq \mathbb R_+\times\lbrace -1,+1\rbrace$, the information stored in the synapse memory can be richer, encoding amplification/reduction on top of the nature of the synapse (excitatory/inhibitory). Using $\mathbb R$ also conveniently gives a Euclidean internal space, which is a geodesic Polish space.
\end{remark}

\begin{remark}[Classical neural networks as a special case]
  If each synaptic distribution is a Dirac mass,
  \[
    \mu_{i,j} = \delta_{z_{i,j}},
  \]
  then the weight
  \[
    w_{i,j}:= W_{i,j}
    = z_{i,j}\,p_{i,j} \in\mathbb R,
  \]
  is deterministic, and the representation reduces exactly to a standard neural network with deterministic weights.

  The stochastic formulation is biologically coherent: synaptic transmission is inherently stochastic, with probabilistic vesicle release and variable postsynaptic response amplitudes.
\end{remark}

\subsubsection{Forward computation with random synaptic transmission}
Given an input activation state
\[
  \psi := \sum_{i=1}^M \psi_i\,\mathbf 1_{y_i} \in\mathrm L^2(Y_M,\mathbb R)\simeq \mathbb R^M,
\]
the synaptic transmission from $y_i$ to $x_j$ uses the \emph{random} weight $W_{i,j}$.
The postsynaptic input to neuron $x_j$ is therefore the random variable
\[
  \mathrm{Ps}_j(\psi\,|\,\rho_{x_j})
  := \sum_{i=1}^M \psi_i\, W_{i,j}
  \in\mathrm L^0(\Omega, \mathbb R).
\]
This quantity corresponds to the pre-activation state of neuron $x_j$.
We can define the global postsynaptic input state by
\[
  \mathrm{Ps}(\psi\,|\,\rho)
  := \sum_{j=1}^N \mathrm{Ps}_j(\psi\,|\,\rho_{x_j})\,\mathbf 1_{x_j}
  = \psi\,W,
\]
where the product is understood as matrix-vector multiplication in $\mathbb R^{M\times N}$, with
\[
  W:=\sum_{j=1}^N W_j\,\mathbf 1_{x_j}=\sum_{i,j} W_{i,j}\,\mathbf 1_{(y_i,x_j)}.
\]

We consider the expectation
\[
  \mathrm{ps}_j(\rho_{x_j},\psi)
  := \mathbb E\left[ \mathrm{Ps}_j(\psi\,|\,\rho_{x_j}) \right]
  = \sum_{i=1}^M \psi_i\, \mathbb E\left[W_{i,j}\right],
\]
which corresponds to the deterministic external observable pre-activation state of neuron $x_j$.
Note that this pre-activation can be seen as a ``contextualized'' geometric map in the following form:
\[
  \mathrm{ps}_j(\rho_{x_j},\psi)=\int_{Y_M\times \mathbb R} \psi(y')z\,\mathrm d\rho_{x_j}(y',z).
\]
This form will prove convenient in later discussions.
At the global level, we may write compactly
\[
  \mathrm{ps}(\rho,\psi)
  := \psi\,w(\rho)
  \qquad\text{where }
  w(\rho)=\sum_{j=1}^N w_j(\rho_{x_j})\,\mathbf 1_{x_j}.
\]

\paragraph{Activation functions.}
Consider the forward output activation function, or \emph{prediction function}, $\mathrm{F}:\mathbb R^N\to \mathrm L^2(X_N,\mathbb R)\simeq\mathbb R^N$ defined by
\[
  \mathrm{F}:u\mapsto\mathrm{F}(u)
  := \sum_{j=1}^N \mathrm{F}_j(u_j)\,\mathbf 1_{x_j},
\]
where $\mathrm{F}_j$ is the activation function of neuron $x_j$. This gives the random forward prediction
\[
  \mathrm{Pred}(\psi\,|\,\rho):=\mathrm F(\mathrm{Ps}(\psi\,|\,\rho))
  =\mathrm F(\psi\,W),
\]
and the \emph{effective prediction}, the prediction computed from the external observable weights:
\[
  \mathrm{pred}(\rho,\psi)
  :=\mathrm F(\mathrm{ps}(\rho,\psi))
  =\mathrm F(\psi\,w(\rho)).
\]
Note that, unless $\mathrm F_j$ affine, in the general case, we have $\mathrm{pred}\not\equiv\mathbb E\left[\mathrm{Pred}\right]$.

\begin{remark}[Induced stochastic embeddings and time-dependent contextualisation]
  \label{rmk:stochastic-embedding-time-dependent-context}
  Considering
  \[
  W_{i,\cdot}:=\sum_{j=1}^N W_{i,j}\,\mathbf 1_{x_j}
  \]
  as the embedding vector of $y_i$, the stochastic nature of this embedding can be interpreted as encoding the multi-semanticity of a given activation pattern.

  We also note that the activation vector $\psi$ actually embeds the residual activation of each neuron. For instance, consider an activation filtered by an exponential kernel memory
  \[
  \psi(t)=\int_0^t \psi(s)\exp(-\frac{t-s}{\tau_m})\,\mathrm ds,
  \]
  with forgetting parameter $\tau_m$ (whose modeling is compatible with a Tan--HWG observable dynamics as seen in Section~\ref{sec:closed_form})\footnote{The forgetting parameter can also be fiberwise dependent as $\tau_{m, x_j}$.}. The memory dynamics encodes time-dependency and multi-semanticity.

  This stochastic embedding interpretation highlights that synaptic distributions encode multi semantic structure, and that the geometry of $\mathcal P_2(Y)$ naturally induces context-dependent representations, in contrast with fixed Euclidean embeddings used in classical neural networks.

  Note that the observable synaptic weight being the product of the synaptic amplitude sample and the attention weight, we may consider the \emph{raw} embedding vector of $y_i$:
  \[
  Z_{i,\cdot}:=\sum_{j=1}^N Z_{i,j}\,\mathbf 1_{x_j},
  \]
  which is not weighted by the attention weights $p_{i,j}$.
\end{remark}

\begin{remark}[Synaptic activation vector]\label{rmk:synaptic-activation}
  Note that we may consider $\psi$ to be dependent on the neuron $x_j$
  \[
  \psi:=\sum_{i,j}\psi_{i,j}\mathbf 1_{(y_i, x_j)},
  \]
  so that the activation vector embeds the residual activation of each synapse, and not just the presynaptic neurons. With
  \[
  \psi_j:=\sum_{i=1}^M\psi_{i,j}\mathbf 1_{y_i},
  \]
  the postsynaptic input would be given by
  \[
  \mathrm{Ps}_j(\psi_j\,|\,\rho_{x_j})
  := \sum_{i=1}^M \psi_{i,j}\, W_{i,j}
  \in\mathrm L^0(\Omega, \mathbb R),
  \]
  and the global postsynaptic input state by
  \[
    \mathrm{Ps}(\psi\,|\,\rho)
    :=\operatorname{Diag}(\psi\,W).
  \]
  This synaptic activation modeling yields a finer representation, biologically more plausible. For each neuron $x_j$, we consider its set of synapses, $M$ being the maximum number of such synapses per neuron. In such representation, each postsynaptic neuron $x_j$ can be linked to its own specific set of presynaptic neurons $\lbrace x'_i\rbrace_{i=1\dots M}$ which could differ from one $x_j$ to another. This opens the representation beyond simple feedforward network topology. This illustrates the versatility of the framework.
\end{remark}

\subsubsection{Feedback activation and alignment energy}

Given a feedback output activation
\[
\phi:=\sum_{j=1}^N \phi_j \mathbf 1_{x_j}\in\mathrm L^2(X_N,\mathbb R)\simeq\mathbb R^N,
\]
we can define the feedback error energy, or \emph{stochastic alignment energy}, by
\begin{align*}
  \hat E(\psi, \phi\,|\,\rho)
    :\!&=\frac12\big\| \mathrm{Pred}(\psi\,|\,\rho) -  \phi \big\|^2
    =\frac12\big\| \mathrm F(\psi\,W) -  \phi \big\|^2
\end{align*}
and the \emph{alignment energy} by
\begin{align*}
  E(\rho, \psi, \phi)
    :\!&=\frac12\big\| \mathrm{pred}(\rho,\psi) -  \phi \big\|^2
    =\frac12\big\| \mathrm{F}(\psi\cdot w(\rho)) -  \phi \big\|^2\\
    &=\frac12\sum_{j=1}^N\left( \mathrm F_j\left(\sum_{i=1}^M \psi_i\,p_{i,j}\,\mathbb E\left[Z_{i,j}\right]\right)-\phi_j\right)^2.\\
\end{align*}
We can rewrite the energy in the following form
\begin{align*}
  E(\rho, \psi, \phi)
    &=\frac12\sum_{j=1}^N\left( \mathrm F_j\left(\psi\,w_j(\rho_{x_j})\right)-\phi_j\right)^2\\
    &=\frac12\sum_{j=1}^N\left( \mathrm F_j\left(
      \int \psi(y')z\,\mathrm d\rho_{x_j}(y',z)
    \right)-\phi_j\right)^2,
\end{align*}
explicitly showing the fiberwise dependence on $\rho_{x_j}$.

\begin{remark}[Interpretation: classic ML correlation and Hebbian alignment]\label{rmk:correlation}
  This expression of the alignment energy is reminiscent of classic machine-learning loss computations. It is as well reminiscent of Hebbian alignment in the sense that stimulating the output layer via $\phi$ will adjust the synaptic weights (i.e.\ $\rho$) so that the forward activation aligns accordingly. Maximazing a scalar product (a correlation between vectors) is actually equivalent to minimizing the $\ell^2$ distance, provided the vectors are normalized. Hence, here, we could also define a \emph{correlation energy} by
  \[
  \tilde E(\rho, \psi, \phi):=-\big\langle\,\frac{\phi}{\|\phi\|}\,,\,\frac{\mathrm{Pred}(\psi\,|\,\rho)}{\|\mathrm{Pred}(\psi\,|\,\rho)\|} \,\big\rangle.
  \]
  This expression is similar to the alignment energy, modulo normalization, where the products $\phi_j\,\mathrm F_j(\mathrm{Ps}_j(\psi))$ appear explicitly, recovering the fact that neurons that ``fire together, wire together''. It provides an intuitive interpretation of the alignment energy as a normalized correlation objective, making explicit the Hebbian structure of the update.
\end{remark}

\begin{remark}[Generalized energy combination]\label{rmk:mccann-energy}
  The alignment energy is a special case of a combination of more general types of energies (potential, interaction, and internal).
  Indeed, expanding the $\ell^2$ product in the alignment energy gives a local energy of the general form
  \[
    \mathcal E_x(\rho_x|(\psi,\phi))
    =\mathcal F_x\left(
      \mathcal E_V(\rho_x|\psi),\,
      \frac12\mathcal E_V^2(\rho_x|\psi)
      \,\big|\,\phi\right),
  \]
  where
  \[
    \mathcal E_V(\rho_x|\psi)
    =\int_{Y_M\times\mathbb R} V(y,z|\psi)\,\mathrm d\rho_x,
    \quad\text{and}\quad
    V(y,z|\psi)
    =\psi(y)z.
  \]

  This is a special case of a local energy of the form
  \[
    \mathcal E_x(\rho_x|(\psi,\phi))
    =\mathcal F_x\left(
      \mathcal E_V(\rho_x|\psi),\,
      \mathcal E_K(\rho_x|\psi)
      \,\big|\,\phi\right),
  \]
  where the term
  \[
    \mathcal E_K(\rho_x|\psi)
    =\iint_{(Y_M\times\mathbb R)^2} K((y,z), (y',z')|\psi) \,\mathrm d\rho_x(y,z)\,\mathrm d\rho_x(y',z'),
  \]
  generalizes the quadratic term.

  This energy can be seen as a combination of energies, with potential energy $\mathcal E_V$ and interaction energy $\mathcal E_K$. Adding a McCann internal energy~\parencite{McCann1997}
  \[
  \mathcal E_U(\rho_x|\psi)=\int_{Y_M\times\mathbb R^d} A(\rho_x(y,z)|\psi)\,\mathrm d\rho_x,
  \]
  where
  \[
  \lambda^dA(\lambda^{-d}) \text{ convex non-increasing on } \lambda\in (0,+\infty) \text{ with } A(0)=0, d=1,
  \]
  provides a general form of energy
  \[
    \mathcal E_x(\rho_x|(\psi,\phi))
    =\mathcal F_x\left(
      \mathcal E_V(\rho_x|\psi),\,
      \mathcal E_K(\rho_x|\psi),
      \mathcal E_U(\rho_x|\psi)
      \,\big|\,\phi\right).
  \]
  This naturally generalizes to the case $Y=Y_M\times\mathbb R^d$, with $d\ge 1$.
\end{remark}

\subsubsection{Plasticity dynamics and $\lambda$-geodesic-convexity of the energy}
\label{sec:multi-scale}

We assume that the memory plasticity dynamics follows a variational dynamics in the Wasserstein space.
For the alignment energy
\[
E(\rho, \psi, \phi)=\frac12\sum_{j=1}^N\left( \mathrm F_j\left(
    \int \psi(y')z\,\mathrm d\rho_{x_j}(y',z)
  \right)-\phi_j\right)^2,
\]
this means minimizing the energy for $\rho_{x_j}$ evolving in the Wasserstein space.
Note that the subsequent dynamics gives both:
\begin{itemize}
  \item the \emph{structural weight dynamics} of the $p_{i,j}$,
  \item and the \emph{synaptic embedding memory dynamics} of the $\mu_{i,j}$.
\end{itemize}
$E$ has a Hebbian form, but in order to have a well-posed dynamics, the challenge is to have a Tan--HWG energy, in particular, it means having $E$ $\lambda$--geodesically-convex (with $\lambda>-\infty$, possibly negative).

Standard activation functions used in machine-learning (e.g.\ ReLU, Leaky ReLU, Softplus, ELU, sigmoid, tanh), are at least weakly convex. We can show that standard activation functions lead to having the fiberwise map
\[
x\mapsto \frac12(\mathrm F_j(x)-\phi_j)^2
\]
at least weakly convex. However, this does not guarantee $\lambda$-geodesic-convexity of the alignment energy. Such $\lambda$-geodesic-convexity can be obtained when the map
\[
  \rho_{x_j}\mapsto\mathrm{ps}_j(\rho_{x_j},\psi)=\int \psi(y')z\,\mathrm d\rho_{x_j}(y',z),
\]
is geodesically affine and Lipschitz. We check that $\mathrm{ps}_j(\cdot,\psi)$ is Lipschitz when $\psi$ is bounded, but geodesic affinity is the main obstruction. This is due to the rigidity of geodesic affinity in Wasserstein spaces.

\paragraph{Intuition.}
Intuitively, $\psi$ ``breaks'' geodesic affinity unless it is constant or when the structural weights $p_{i,j}$ are constant (i.e.\ no mass transport between synapses). As a consequence, the associated energy fails to be geodesically $\lambda$–convex, and a Tan--HWG variational dynamics cannot be defined directly. However, geodesic affinity can be controlled through a natural multi-timescale mechanism, consisting of:
\begin{itemize}
  \item \emph{a fast ``active'' regime}, where activations $\psi$ can evolve, but the structural weights are stable,
  \item and \emph{a slow ``consolidation'' regime}, where, conversely, activations are stable (i.e.\ ``quiet'' contexts), but structural weights can evolve.
\end{itemize}

\begin{remark}
  The map $\mathrm{ps}_j(\cdot,\psi)$ is actually a \emph{contextualized geometric representation map} (see Section~\ref{sec:mapped-energy}). We see that studying geodesic affinity and Lipschitz continuity of such a map is key to ensure sequential stability of induced Hebbian energies (when externally composed with a $\lambda$-convex function). A systematic analysis is beyond the scope of this article. However we can elaborate on the case of $\mathrm{ps}_j(\cdot,\psi)$.
\end{remark}

\begin{proposition}[Rigidity of the pre-activation observable geodesic affinity]
  Let $(Y_1,d_Y)$ be a geodesic CAT(0) space, let $Y_2=\mathbb{R}^d$ with its Euclidean metric, and consider the product space $Y=Y_1\times Y_2$ endowed with the product metric and the $2$-Wasserstein distance $W_2$ on $\mathcal{P}_2(Y)$.

  Let $\psi:Y_1\to\mathbb{R}$ be a bounded Borel function and define
  \[
    u:\mathcal{P}_2(Y)\to\mathbb{R}^d,\qquad
    u(\rho):=\int_{Y_1\times Y_2} z\,\psi(y')\,\mathrm d\rho(y',z).
  \]
  Assume that $u$ is geodesically affine on a subset $\mathcal{A}\subset\mathcal{P}_2(Y)$ in the following sense: for every $W_2$--geodesic $(\rho_t)_{t\in[0,1]}$ contained in $\mathcal{A}$, the map $t\mapsto u(\rho_t)$ is affine.

  If $\mathcal{A}$ contains two measures whose $Y_1$--marginals have different supports connected by a nontrivial geodesic in $Y_1$, then $\psi$ must be constant on the union of these supports. In particular, if $\mathcal{A}$ is stable under $W_2$--geodesics and contains measures with arbitrary $Y_1$--marginals, then $\psi$ is constant on $Y_1$.
\end{proposition}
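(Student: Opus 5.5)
We test affinity of $u$ along explicit geodesics between Dirac masses, where the computation reduces to a scalar function of $t$. For $a,b\in Y_1$ and $z_0,z_1\in\mathbb R^d$, the unique coupling between $\delta_{(a,z_0)}$ and $\delta_{(b,z_1)}$ is optimal. In the product space $Y_1\times\mathbb R^d$ with the $\ell^2$ product metric, geodesics are products of constant-speed geodesics in each factor. Hence a $W_2$-geodesic between the two Diracs is $\rho_t=\delta_{(\gamma(t),(1-t)z_0+tz_1)}$, where $\gamma$ is a $Y_1$-geodesic from $a$ to $b$. Along it,
\[
u(\rho_t)=\big((1-t)z_0+tz_1\big)\,\psi(\gamma(t)).
\]
The first step is to note that the hypothesis concerns measures with prescribed $Y_1$-marginals. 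We therefore interpret ``$\mathcal A$ contains two measures whose marginals are connected by a nontrivial geodesic'' as allowing us to use such Dirac pairs, or more generally geodesics along which mass travels from a point $a$ of one support to a point $b$ of the other. Under the stability assumption in the last sentence this is automatic. For general measures we would localize: pick an optimal plan and restrict to a single pair $(a,b)$ in its support, using that restrictions of optimal plans are optimal.

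The core computation comes next. Choose $z_0=z_1=z\neq 0$. Then $u(\rho_t)=z\,\psi(\gamma(t))$, so affinity forces $g(t):=\psi(\gamma(t))$ to be affine on $[0,1]$. Now vary the $\mathbb R^d$ endpoints. Taking $z_0=0$ and $z_1=z$ gives $t\,g(t)\,z$ affine. Writing $g(t)=\alpha+\beta t$, the product $t(\alpha+\beta t)=\alpha t+\beta t^2$ is affine only if $\beta=0$. The geodesic $\gamma$ in $Y_1$ is the same for every choice of $z_0,z_1$, because the $Y_1$ and $\mathbb R^d$ components decouple in the product metric. Hence $g$ is constant, i.e.\ $\psi(a)=\psi(\gamma(t))=\psi(b)$ for all $t$. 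Since $a$ and $b$ range over the two supports, and any two points within one support can be compared through a point of the other, $\psi$ is constant on the union of the supports.

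For the final claim, assume $\mathcal A$ is geodesically stable and contains measures with arbitrary $Y_1$-marginals. Then for any $a,b\in Y_1$ we can take $\delta_a\otimes\delta_{z_0}$ and $\delta_b\otimes\delta_{z_1}$ in $\mathcal A$, and the argument gives $\psi(a)=\psi(b)$. So $\psi$ is constant on $Y_1$, using that $Y_1$ is geodesic and hence every pair of points is joined by a geodesic.

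The main obstacle is the gap between the hypothesis as stated, which fixes two measures in $\mathcal A$, and the need to vary the $\mathbb R^d$ components freely to separate the $t$ and $t^2$ terms. If only one geodesic is available, a single affine $t\mapsto(\ldots)\psi(\gamma(t))$ does not force $\psi$ to be constant; for instance $z_0=z_1$ only yields that $\psi\circ\gamma$ is affine. I would handle this in two ways. First, I would read ``$\mathcal A$ contains'' as including translates in the $z$-variable, which are harmless since they change $u$ only by $\psi$-weighted constants. Second, I would argue that $\psi\circ\gamma$ affine at every point of a bounded Borel function along all geodesics between the supports, together with endpoints lying in distinct supports, already gives constancy once a second geodesic with a different $z$-profile is admitted. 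I expect to state explicitly the mild richness assumption on $\mathcal A$ that is actually used.
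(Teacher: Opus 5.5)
Your core computation is the paper's. You test $u$ on the Dirac geodesics $\rho_t=\delta_{(\gamma(t),(1-t)z_0+tz_1)}$ and vary the $\mathbb R^d$ endpoints to force $\psi\circ\gamma$ to be constant. Your two explicit choices are a cleaner version of the paper's ``one checks'': $(z_0,z_1)=(z,z)$ makes $g=\psi\circ\gamma$ affine, and then $(0,z)$ kills the $t^2$ coefficient. They also correctly pin down the endpoint values $g(0)=\psi(a)$ and $g(1)=\psi(b)$, whereas the choice $(0,z)$ alone would leave $g(0)$ undetermined.

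The gap you flag is genuine, and the paper's proof has it too, silently: nothing in the hypotheses places these Dirac geodesics, with freely chosen $z_0,z_1$, inside $\mathcal A$. Two of your patches do not close it.

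\emph{It is not automatic under the last hypothesis.} Take $\mathcal A=\{\nu\otimes\delta_0:\nu\in\mathcal P_2(Y_1)\}$. It has arbitrary $Y_1$-marginals and is stable under $W_2$-geodesics: along a geodesic between two such measures the $z$-marginal must stay $\delta_0$, since any $z$-displacement adds cost without shortening the $Y_1$-part. Yet $u\equiv 0$ on $\mathcal A$ for every $\psi$. Arbitrary $Y_1$-marginals do not give arbitrary joint laws. So your step ``take $\delta_a\otimes\delta_{z_0}$ and $\delta_b\otimes\delta_{z_1}$ in $\mathcal A$'' is unjustified, and the ``in particular'' claim is false as written.

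\emph{Localization fails.} $u(\rho_t)$ is an average over transport rays, and affinity of the average does not pass to single rays. For example, the unique geodesic from $\delta_{(a,0)}$ to $\tfrac12\delta_{(b,z)}+\tfrac12\delta_{(b,-z)}$ has $u(\rho_t)\equiv 0$ for every $\psi$. Restricting the plan to one ray produces a different curve, which need not lie in $\mathcal A$.

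Your own remark about $z_0=z_1$ likewise gives a counterexample to the first claim as literally stated. Take $\mathcal A=\{\delta_{(s,1)}:s\in[0,1]\}\subset\mathcal P_2(\mathbb R\times\mathbb R)$ and $\psi(y)=\min(1,\max(0,y))$: $u$ is affine along every geodesic in $\mathcal A$, but $\psi(0)\neq\psi(1)$.

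So the explicit richness assumption you mention at the end is indispensable, not a convenience. Either of the following suffices:
\begin{itemize}
  \item For all $a,b$ in the two supports, $\mathcal A$ contains the Dirac geodesics from $(a,z_0)$ to $(b,z_1)$ for $(z_0,z_1)=(z,z)$ and $(0,z)$, with some $z\neq 0$.
  \item One such Dirac pair with $z_0\neq z_1$, together with all its $z$-translates (closure under translates must itself be assumed). Since $(z_t+c)\,g(t)$ affine for all $c$ gives $g$ affine, and then $\beta\,(z_1-z_0)=0$.
\end{itemize}
Under such a hypothesis, which is what the paper's proof tacitly uses, your argument is complete, including the union-of-supports and global conclusions.
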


\begin{proof}
  Fix two points $(y_0,z_0),(y_1,z_1)\in Y_1\times Y_2$ with $y_0\neq y_1$ and $z_0\neq z_1$, and consider the measures
  \[
    \rho_0:=\delta_{(y_0,z_0)},\qquad
    \rho_1:=\delta_{(y_1,z_1)}.
  \]
  The unique $W_2$--geodesic $(\rho_t)_{t\in[0,1]}$ between $\rho_0$ and $\rho_1$ is given by
  \[
    \rho_t = \delta_{(y_t,z_t)},\qquad
    y_t:=\gamma_{y_0\to y_1}(t),\quad
    z_t:=(1-t)z_0+t z_1,
  \]
  where $\gamma_{y_0\to y_1}$ is the constant-speed geodesic in $Y_1$ from $y_0$ to $y_1$.

  Along this geodesic,
  \[
    u(\rho_t)
    = z_t\,\psi(y_t)
    = \big((1-t)z_0+t z_1\big)\,\psi\big(\gamma_{y_0\to y_1}(t)\big).
  \]
  By assumption, $t\mapsto u(\rho_t)$ is affine. In particular, for each fixed pair $(y_0,z_0),(y_1,z_1)$, the map
  \[
    t\mapsto \big((1-t)z_0+t z_1\big)\,\psi\big(\gamma_{y_0\to y_1}(t)\big)
  \]
  must be affine on $[0,1]$.

  Now fix $y_0\neq y_1$ and vary $z_0,z_1\in\mathbb{R}^d$. By choosing $z_0,z_1$ linearly independent (or simply non-collinear in $d\geq 2$, or distinct scalars in $d=1$), one checks that the only way for the product of an affine function in $t$ (namely $z_t$) with the scalar function $t\mapsto\psi(\gamma_{y_0\to y_1}(t))$ to remain affine for all choices of $z_0,z_1$ is that $t\mapsto\psi(\gamma_{y_0\to y_1}(t))$ is constant on $[0,1]$. Hence
  \[
    \psi\big(\gamma_{y_0\to y_1}(t)\big) = \psi(y_0) = \psi(y_1)
    \quad\text{for all }t\in[0,1].
  \]
  Therefore, whenever two points $y_0,y_1\in Y_1$ are connected by a geodesic appearing as the $Y_1$--component of a $W_2$--geodesic in $\mathcal{A}$, the values of $\psi$ at $y_0$ and $y_1$ must coincide. If $\mathcal{A}$ contains measures whose $Y_1$--marginals have supports connected by nontrivial geodesics in $Y_1$, this implies that $\psi$ is constant on the union of these supports.
  If, moreover, $\mathcal{A}$ is geodesically rich enough to contain measures with arbitrary $Y_1$--marginals, we conclude that $\psi$ is constant on $Y_1$.
\end{proof}

\begin{corollary}[Multi-scale coherence]
  In the setting above, suppose one considers an energy functional of the form
  \[
    E(\rho) := F\big(u(\rho)\big),
  \]
  with $F:\mathbb{R}^d\to\mathbb{R}$ $\lambda$--convex. If $E$ is required to be geodesically $\lambda$--convex on a class $\mathcal{A}\subset\mathcal{P}_2(Y)$ that is stable under $W_2$--geodesics and contains measures with nontrivial variation in their $Y_1$--marginals, then necessarily $\psi$ must be constant on the relevant region of $Y_1$.

  Consequently, any nontrivial spatial modulation of synaptic activation (i.e., any non-constant $\psi$) is compatible with a geodesically convex gradient-flow structure only on subsets where the $Y_1$--marginal is effectively fixed (i.e.\ no transport occurs in $Y_1$).

  This yields a natural separation of scales: a fast dynamics acting on the $\mathbb R^d$--component at fixed $Y_1$--marginals, and a slow structural dynamics (consolidation regime) that can only be made variationally coherent after suitable averaging or coarse-graining of $\psi$.
\end{corollary}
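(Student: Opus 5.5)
The plan is to reduce the corollary to the preceding rigidity proposition. The reduction goes through the statement that geodesic $\lambda$--convexity of $E=F\circ u$ along all geodesics in $\mathcal A$, for every admissible choice of $\lambda$--convex $F$, forces $t\mapsto u(\rho_t)$ to be affine along those geodesics.

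\emph{Step 1: reduction to Dirac geodesics.} I will use the Dirac geodesics from the proof of the proposition. Take $\rho_t=\delta_{(y_t,z_t)}$ with $y_t=\gamma_{y_0\to y_1}(t)$ and $z_t=(1-t)z_0+tz_1$; these lie in $\mathcal A$ whenever $\mathcal A$ contains measures with nontrivial $Y_1$--marginal variation and is stable under geodesics. Along such a geodesic, $g(t):=u(\rho_t)=z_t\,\psi(y_t)$. The constraint is then
\[
F(g(t))\le(1-t)F(g(0))+tF(g(1))-\tfrac{\lambda}{2}t(1-t)W_2^2(\rho_0,\rho_1),
\]
with $W_2(\rho_0,\rho_1)^2=d_Y(y_0,y_1)^2+|z_0-z_1|^2$.

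\emph{Step 2: extracting affinity.} Since $F$ ranges over a class rich enough to include affine functions $F(v)=\pm\langle a,v\rangle$, which are $0$--convex hence $\lambda$--convex for $\lambda\le0$, applying the inequality to both signs gives
\[
\big|\langle a,g(t)-(1-t)g(0)-tg(1)\rangle\big|\le \tfrac{|\lambda|}{2}t(1-t)W_2^2(\rho_0,\rho_1).
\]
To upgrade this near-affinity to exact affinity, I will scale $z_0,z_1$ by $s\to\infty$ with $y_0,y_1$ fixed. The left side is linear in $s$. The right side, however, grows like $s^2$, so scaling alone does not close the argument. I will therefore instead use $F(v)=\pm\langle a,v\rangle$ with $\lambda=0$, or note that the statement requires convexity for the given $F$ with the fixed $\lambda$. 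The rigorous choice is to read the corollary as requiring $E$ geodesically $\lambda$--convex for all $\lambda$--convex $F$, including $F$ affine with the same $\lambda\ge0$ (when $\lambda<0$, shift $F$ by $-\tfrac{\lambda}{2}|v|^2$ to keep it $\lambda$--convex). Then, by the fiberwise identity of the proposition's proof, $g$ must be affine. This is the main obstacle: the corollary is phrased loosely, and making ``necessarily'' precise requires fixing which family of $F$ is quantified. I would state this interpretation explicitly and then apply the proposition.

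\emph{Step 3: conclusion and scale separation.} Once $t\mapsto u(\rho_t)$ is affine on all geodesics of $\mathcal A$, the preceding proposition gives that $\psi$ is constant on the union of the relevant supports in $Y_1$. Conversely, if the $Y_1$--marginal is fixed, so that no transport occurs in $Y_1$ and geodesics move only the $\mathbb R^d$ component, then $y_t=y_0$ pointwise along the optimal plan and $u(\rho_t)=\int z_t\psi(y)\,\mathrm d\pi$ is affine in $t$. Hence $F\circ u$ is $\lambda$--convex, since $u$ is additionally Lipschitz for bounded $\psi$. These two facts justify the stated fast/slow separation, which is interpretive rather than a further mathematical claim.
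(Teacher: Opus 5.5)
Your reduction matches what the paper relies on. The corollary is stated without proof, as a consequence of the rigidity proposition, and the premise that geodesic $\lambda$--convexity of $F\circ u$ forces $t\mapsto u(\rho_t)$ to be affine is left implicit (the earlier remark only says one ``typically needs'' geodesic affinity). You correctly isolate this premise as the crux. You also rightly note that it requires quantifying over a family of $F$: for a single $F$, e.g.\ a constant, the claim is false.

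\textbf{The gap.} Your Step~2 does not establish the premise. The proposed fix is sign-confused:
\begin{itemize}
\item For $\lambda>0$, an affine $F$ is not $\lambda$--convex.
\item For $\lambda<0$, an affine $F$ already is $\lambda$--convex; the obstruction is the slack $\tfrac{|\lambda|}{2}t(1-t)W_2^2(\rho_0,\rho_1)$, and adding $\tfrac{|\lambda|}{2}|v|^2$ does not remove it.
\item The closing appeal to ``the fiberwise identity of the proposition's proof'' is circular, since that proof assumes affinity.
\end{itemize}
As written, the argument only covers $\lambda=0$.

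\textbf{The fix.} Scale the linear functional, not the $z$--variables. Your displayed inequality holds for every $a\in\mathbb R^d$, and its right-hand side does not depend on $a$. Replacing $a$ by $ca$ and letting $c\to\infty$ therefore already gives exact affinity for $\lambda\le 0$.

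For arbitrary $\lambda$, use $F_c(v):=\tfrac{\lambda}{2}|v|^2+c\langle a,v\rangle$. It is $\lambda$--convex for every $c\in\mathbb R$, because adding a linear functional preserves $\lambda$--convexity. Set $D(t):=u(\rho_t)-(1-t)u(\rho_0)-t\,u(\rho_1)$. The requirement then reads $c\langle a,D(t)\rangle\le K(t)$, where $K(t)$ collects the quadratic terms and $-\tfrac{\lambda}{2}t(1-t)W_2^2(\rho_0,\rho_1)$ and does not depend on $c$. Letting $c\to\pm\infty$ yields $D(t)=0$ on every geodesic of $\mathcal A$, and the proposition applies.

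This makes Step~1 unnecessary, which is fortunate. Geodesic stability of $\mathcal A$ plus nontrivial marginal variation does not place Dirac masses at arbitrary points in $\mathcal A$. That is an additional assumption, inherited from the proposition's proof.

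\textbf{Step~3 converse.} The converse you sketch is also incorrect as stated, although the corollary is a necessity claim and does not need it. Equal $Y_1$--marginals at the endpoints do not force the optimal plan to keep $y$ fixed. Take
\[
\rho_0=\tfrac12\delta_{(y_a,0)}+\tfrac12\delta_{(y_b,s)},\qquad
\rho_1=\tfrac12\delta_{(y_a,s)}+\tfrac12\delta_{(y_b,0)},
\]
with $s\in\mathbb R^d$ and $|s|>d_Y(y_a,y_b)$.
\begin{itemize}
\item The fibre-preserving coupling costs $|s|^2$.
\item The crossing coupling costs $d_Y(y_a,y_b)^2$, which is smaller.
\item So the $W_2$--geodesic transports mass in $Y_1$, and $u(\rho_t)=\tfrac12 s\,\psi(\gamma_{y_b\to y_a}(t))$ is not affine unless $\psi$ is affine along that geodesic.
\end{itemize}
Sufficiency requires restricting to fibre-preserving interpolations (the geometry of the $\mu_i$ at fixed weights $p_i$), not merely fixing marginals. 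Even there, for $\lambda<0$ you only get $\lambda\|\psi\|_\infty^2$--convexity rather than $\lambda$--convexity.
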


\begin{remark}[Functional role of neuronal assemblies in consolidation regime]
 This gives a geometric interpretation of the functional role of neuronal assemblies in consolidation regime: groups of neurons with similar activation patterns are clustered in order to obtain an ``effectively constant'' activation state $\psi$, allowing proper Tan--HWG dynamics of structural weights.
\end{remark}

\subsubsection{Discussion}

\paragraph{Activation functions and non-linearity.}
In classical artificial neural networks, activation functions are introduced to provide non-linear computation capacities to otherwise ``flat'' Euclidean representational networks. In our framework, we can rethink their functional role: the underlying geometry already encodes rich non-linear features, and activation functions can be seen as representational maps that have to respect such geometry (i.e.\ preserving geodesicity via $\lambda$-convexity). In other words, they act as compatible projections of the underlying latent space. They do not shape an absolute latent space: they provide observable projections, from which we approximate a fundamentally curved geometry, which, in turn, is the agent's representation of the world. In contrast with classical neural networks, non-linearity does not come from the activation function alone, but from the underlying Wasserstein geometry.

\paragraph{Spiking neural networks and sequentiality of context capture.}
In biological neural networks, activation functions are spiking functions. Again, this is compatible with our framework.
For instance, using sigmoid or tanh activation functions of the form
\[
  x\mapsto\sigma(\beta x), \quad\text{or}\quad x\mapsto\tanh(\beta x), \quad \beta \gg 1,
\]
would give ``quasi-spiking'' neural networks with non-convex, but still, Tan--HWG energies.

Moreover, it also provides a biological justification for the sequential capture of external context. Since spikes have refractory periods, this can be seen as an active freezing of the signal.

\paragraph{A multi-scale coherence mechanism.}
Our model naturally separates synaptic plasticity into two distinct dynamical regimes: a fast functional dynamics acting on the synaptic weights ($\mu_{i,j}$), and a slow structural dynamics acting on the distribution of synaptic connections ($p_{i,j}$). This separation is not imposed \emph{a priori}; rather, it emerges from the geometric constraints of the Wasserstein space in which the dynamics evolves.

At the fast time scale, the synaptic activation function $\psi$ may vary across synapses, leading to a heterogeneous and locally irregular evolution of the synaptic weights. In this regime, the energy functional remains geodesically convex when the structural weights $(p_{i,j})$ are held fixed, allowing a well-posed Tan--HWG dynamics.

At the slow time scale, however, the same geometric constraints imply that a coherent variational formulation is only possible when the influence of $\psi$ is effectively averaged or homogenized (to obtain $\psi$ effectively constant). In other words, the slow structural dynamics can only be geometrically consistent if it ``sees'' a coarse-grained version of the fast synaptic activity.

Biologically, this mechanism resonates with well-documented observations. Fast synaptic plasticity (e.g., LTP/LTD, STDP) depends on local, heterogeneous activity, whereas slow structural plasticity (e.g., spine growth and elimination, long-term consolidation) is driven by global or averaged signals, such as those emerging during sleep or during the coordinated reactivation of neuronal assemblies. Thus, the mathematical structure of the model mirrors the known separation between rapid functional plasticity and slow structural remodeling in neural circuits.

\paragraph{Non-uniqueness of dynamics.}
Note that $\lambda$-convexity with $\lambda<0$ does not guarantee uniqueness of the JKO step, but the Tan--HWG framework naturally accomodates multivalued updates. In other words, the dynamics can be non-deterministic: agents facing exactly the same contexts and initial conditions may show different memory dynamics. This is not to be confused with the stochastic nature of random variables like the amplitude $Z_{i,j}$, which stochasticity comes from the distribution $\mu_{i,j}$. Instead, the non-deterministicity here is relative to the evolution of $\rho$, thus $\mu_{i,j}$ itself. This evolution can be deterministic (e.g.\ when $\lambda\ge 0$) while $Z_{i,j}$ remains a random value.

\paragraph{Global signal.}
Note that in the Tan--HWG alignment energy, we implicitly identified the context as
\[
\Phi:=\sum_{i=1}^M \psi_i \,\delta_{y_i}\otimes\sum_{j=1}^N\phi_j\,\delta_{x_j}
\in\mathcal M(Y_M\times X_N)\subset\mathcal M((Y_M\times\mathbb R)\times X_N)^{X_N},
\]
or, in the more general synaptic activation case (see Remark~\ref{rmk:synaptic-activation}),
\[
\Phi:=\lbrace \Phi_j\rbrace_{j=1\dots N},
\quad
\Phi_j:=\sum_{i=1}^M \psi_{i,j} \,\delta_{y_i}\otimes\sum_{k=1}^N\phi_k\,\delta_{x_k},
\quad
\Phi\in\mathcal M(Y_M\times X_N)^{X_N}\subset\mathcal M((Y_M\times\mathbb R)\times X_N)^{X_N},
\]
and the global signal as
\[
S(\rho,\Phi):=(P_{in}(\Phi), P_{out}(\Phi)),
\quad
P_{in}(\Phi)=\left\lbrace \sum_{i=1}^M \psi_{i,j} \mathbf 1_{y_i}\right\rbrace_{j=1\dots N},
\text{and } P_{out}(\Phi)=\left\lbrace \sum_{k=1}^N\phi_k\mathbf 1_{x_k}\right\rbrace_{j=1\dots N}.
\]
Hence the external space is actually
\[
Z=\mathrm L^2(Y)\times\mathrm L^2(X_N),
\]
and we implicitly used the canonical embedding
\[
\mathrm L^2(Y_M)
  \hookrightarrow
\mathrm L^2(Y_M \times \mathbb R)
  \hookrightarrow
Z = \mathrm L^2(Y) \times \mathrm L^2(X_N).
\]
for the geometric representation maps.

\paragraph{Extension.}
The previous construction, based on the internal space $Y_M \times \mathbb R$, admits a natural geometric reinterpretation.
Indeed, using the polar decomposition of real numbers, $\mathbb R \simeq \mathbb R_+ \times \{-1,1\}$, we may rewrite
\[
Y_M \times \mathbb R
\simeq Y_M \times \mathbb R_+ \times \{-1,1\},
\]
where the discrete set $\{-1,1\}$ can be viewed as a degenerate phase space restricted to angles $0$ and $\pi$.
This observation suggests a natural extension: replace the discrete phase set by the continuous unit circle.
Introducing the internal space
\[
Y := Y_M \times \mathbb R_+ \times \mathbb S^1
\simeq Y_M \times \mathbb C,
\]
where $\mathbb S^1$ denotes the unit circle, allows us to encode both amplitudes and continuous phases.
Although $\mathbb S^1$ is not a CAT(0) space, the identification
\[
(r,\theta) \longmapsto r e^{i\theta} \in \mathbb C
\]
embeds amplitude–phase pairs into the Euclidean plane, which \emph{is} CAT(0).
Hence the extended internal synaptic space $Y$ remains compatible with the Tan--HWG geometric framework.

This extension is the object of the next section.

\subsection{Extension to complex valued weights: spectral memory and phase locking}\label{sec:spectral-memory}

Biological and artificial neural systems often rely on oscillatory interactions to coordinate activity across populations of units. Phase synchronisation plays a central role in attention, memory consolidation, and large-scale communication between brain areas. Oscillatory couplings also appear in machine-learning architectures such as Hopfield networks~\parencite{hopfield}, reservoir systems, and oscillator-based computing.

\paragraph{Internal oscillatory state space.}
To incorporate such effects, the internal synaptic space must be enriched so as to encode phases or complex amplitudes. The representation
\[
  Y := Y_M \times \mathbb C
\]
provides a natural geometric setting for this purpose, enabling the introduction of \emph{spectral memory} and phase locking mechanism within the Tan--HWG framework. For each $(y,z)\in Y_M \times \mathbb C$, the complex coordinate $z\in\mathbb C$ encodes an oscillatory state with amplitude and phase.

Each output neuron $x_j$ is now represented by a probability measure
\[
  \rho_{x_j} \in \mathcal P_2(Y_M \times \mathbb C),
  \qquad
  \rho_{x_j} = \sum_{i=1}^M p_{i,j}\,\delta_{y_i}\otimes\mu_{i,j},
\]
where each synapse $(y_i,x_j)$ has a \emph{complex synaptic embedding memory}
\[
  \mu_{i,j} \in \mathcal P_2(\mathbb C).
\]
This distribution is an internal structural state: it encodes a preferred phase profile, possibly multimodal, with uncertainty.

\subsubsection{Spectral memory}
The complex synaptic amplitude sample is now a complex random variable $Z_{i,j}\sim \mu_{i,j}$, with polar decomposition
\[
Z_{i,j}=r_{i,j}\,e^{\mathrm i\theta_{i,j}},
\qquad\text{where}\quad
r_{i,j}\in\mathrm L^0(\Omega, \mathbb R_+),
\quad\text{and}\quad
\theta_{i,j}\in\mathrm L^0(\Omega, \mathbb R/(2\pi\mathbb Z)).
\]
We also consider the polar decomposition of the first moment
\[
\mathbb E\left[Z_{i,j}\right]=\hat r_{i,j}\,e^{\mathrm i\hat\theta_{i,j}}.
\]
We define:

\begin{definition}[Spectral memory]
  For a given memory field $\rho$, the \emph{spectral memory} of synapse $(i,j)$ is the probability distribution $\xi_{i,j}$ of the random phase $\theta_{i,j}$.
\end{definition}

This spectral memory is a \emph{structural} phase memory: it is an internal property of the synapse, distinct from the instantaneous activation phases of the units. Activation states now become complex-valued,
\[
\psi_i=|\psi_i|\,e^{\mathrm i\theta_i^\psi},
\qquad
\phi_j=|\phi_j|\,e^{\mathrm i\theta_j^\phi},
\]
and interact with the internal spectral memory through the observable synaptic weight
\[
W_{i,j} := p_{i,j}\,r_{i,j}\,e^{\mathrm i\theta_{i,j}}.
\]

\paragraph{Postsynaptic input and spectral filtration.}
The postsynaptic input becomes the complex random variable
\[
  \mathrm{Ps}_j(\psi)
  := \sum_{i=1}^M |\psi_i|\,p_{i,j}\,r_{i,j}\,
  \exp\!\left(\mathrm i\left(\theta_{i,j}+\theta_i^\psi\right)\right).
\]
The real part (or any projection) of this signal is modulated by the spectral memory profile. Depending on the shape of $\mu_{i,j}$, the synapse implements:
\begin{itemize}
  \item a deterministic phase shift (Dirac spectral memory);
  \item multimodal phase selectivity (e.g.\ finite or countable support);
  \item robust spectral filtering (e.g.\ Gaussian or mixture distributions).
\end{itemize}
Thus, the synapse acts as a \emph{functional spectral filter} whose selectivity is encoded in its internal memory distribution.

\subsubsection{Alignment energy and explicit Hebbian structure}

The alignment energy associated with a memory field $\rho$ and activation states $(\psi,\phi)$ is
\[
  E(\rho,\psi,\phi)
  := \frac12 \big\| \mathrm F(\psi \cdot w(\rho)) - \phi \big\|^2.
\]
Expanding the Hermitian product yields
\begin{align*}
  E(\rho,\psi,\phi)
    &= \frac12 \big\|\mathrm F(\psi \cdot w(\rho))\big\|^2
       + \frac12 \|\phi\|^2
       - \Re\!\left\langle \mathrm F(\psi \cdot w(\rho)),\, \phi \right\rangle \\
    &= \frac12 \big\|\mathrm F(\psi \cdot w(\rho))\big\|^2
       + \frac12 \|\phi\|^2
       - \sum_{j=1}^N
         \Re\!\left[
           \overline{\phi_j}\,
           \mathrm F_j\!\left(
             \sum_{i=1}^M \psi_i\, p_{i,j}\,
             \mathbb E[Z_{i,j}]
           \right)
         \right] \\
    &= \frac12 \big\|\mathrm F(\psi \cdot w(\rho))\big\|^2
       + \frac12 \|\phi\|^2
       - \sum_{j=1}^N
         \Re\!\left[
           \overline{\phi_j}\,
           \mathrm F_j\!\left(
             \sum_{i=1}^M \psi_i\, p_{i,j}\,
             \mathbb E[r_{i,j} e^{\mathrm i \theta_{i,j}}]
           \right)
         \right].
\end{align*}

\paragraph{Locally affine activation functions.}
Assume that each activation function $\mathrm F_j$ is locally affine on a non-negligible open neighbourhood $\mathcal U \subset \mathbb C$ containing the values
\[
  \sum_{i=1}^M \psi_i\, p_{i,j}\,
  \mathbb E[r_{i,j} e^{\mathrm i \theta_{i,j}}],
\]
and takes the form
\[
  \mathrm F_j(g) = a_j g + b_j,
  \qquad a_j,b_j \in \mathbb C,
\]
for instance when each $\mathrm F_j$ is affine by parts.

In this region, the alignment energy becomes
\[
  E(\rho,\psi,\phi)
  = \frac12 \big\| \psi \cdot w(\rho) \cdot A - (\phi - B) \big\|^2,
\]
where
\[
  A := \sum_{j=1}^N a_j\, \mathbf 1_{(x_j,x_j)},
  \qquad
  B := \sum_{j=1}^N b_j\, \mathbf 1_{x_j}.
\]

\begin{remark}[Interpretation]
  The vector $B$ plays the role of a \emph{bias}, while $A$ acts as an
  \emph{inertia operator}, in analogy with the coefficients $\sqrt{\alpha_j}$
  in the purely quadratic Tan--HWG case and the role of~$\alpha_j$ in the
  associated EMA observable dynamics.
\end{remark}

Expanding the Hermitian product on~$\mathcal U$ gives
\begin{align*}
  E(\rho,\psi,\phi)
    &= \frac12 \big\|\psi \cdot w(\rho) \cdot A\big\|^2
       + \frac12 \|\phi - B\|^2
       - \Re\!\left\langle
           \psi \cdot w(\rho) \cdot A,\,
           \phi - B
         \right\rangle.
\end{align*}

The quadratic term expands as
\begin{align*}
  \frac12 \big\|\psi \cdot w(\rho) \cdot A\big\|^2
  &= \frac12 \sum_{j=1}^N |a_j|^2
     \left|\sum_{i=1}^M \psi_i\, w_{i,j}(\rho_{x_j})\right|^2 \\
  &= \frac12 \sum_{j=1}^N |a_j|^2
     \sum_{i, i'}
       |\psi_i||\psi_{i'}|\, p_{i,j} p_{i',j}\,
       \hat r_{i,j} \hat r_{i',j}\,
       \cos\!\left(
         (\hat\theta_{i,j} + \theta_i^\psi)
         - (\hat\theta_{i',j} + \theta_{i'}^\psi)
       \right).
\end{align*}

The linear term becomes
\begin{align*}
  -\Re\!\left\langle
    \psi \cdot w(\rho) \cdot A,\,
    \phi - B
  \right\rangle
  &= - \sum_{i,j}
     |\phi_j - b_j|\, |a_j|\, |\psi_i|\, p_{i,j}\, \hat r_{i,j}\,
     \cos\!\left(
       \hat\theta_{i,j}
       + \theta_i^\psi
       + \arg(a_j)
       - \arg(\phi_j - b_j)
     \right).
\end{align*}

Thus, for each fixed $j$, the fiberwise energy reads
\begin{equation}
\begin{aligned}
  \mathcal E_j(\rho_{x_j},\psi,\phi)
    :=\,&
    \frac12 |a_j|^2
    \sum_{i,i'} |\psi_i||\psi_{i'}|\, p_{i,j} p_{i',j}\,
      \hat r_{i,j} \hat r_{i',j}\,
      \cos\!\left(
        (\hat\theta_{i,j} + \theta_i^\psi)
        - (\hat\theta_{i',j} + \theta_{i'}^\psi)
      \right) \\
    &\quad
    - |a_j|\,|\phi_j - b_j|
      \sum_i |\psi_i|\, p_{i,j}\, \hat r_{i,j}\,
      \cos\!\left(
        \hat\theta_{i,j}
        + \theta_i^\psi
        + \arg(a_j)
        - \arg(\phi_j - b_j)
      \right)\\
    &\quad
    + |\phi_j - b_j|^2. \label{eq:local-energy}
\end{aligned}
\end{equation}

\begin{remark}[Explicit Hebbian structure of the energy]
  By the energy equation~\eqref{eq:local-energy}, we see that the synaptic memory weights $p_{i,j}$, $\hat r_{i,j}$ and $\hat\theta_{i,j}$ are directly influenced by co-activations $\psi_i\,\psi_{i'}$ and $\psi_i\,(\phi_j-b_j)$: this is an explicit manifestation of the Hebbian nature of the energy.
  In particular, the output residual activation is only present in the linear term
  \[
  - |a_j|\,|\phi_j - b_j|
    \sum_i |\psi_i|\, p_{i,j}\, \hat r_{i,j}\,
    \cos\!\left(
      \hat\theta_{i,j}
      + \theta_i^\psi
      + \arg(a_j)
      - \arg(\phi_j - b_j)
    \right)
  \]
  which has a direct Hebbian interpretation. More precisely, when both the residual presynaptic activity $|\psi_i|$ and the residual postsynaptic activity $|\phi_j-b_j|$ (corrected by bias) are simultaneously large, it decreases the energy proportionally to
  \[
  p_{i,j}\, \hat r_{i,j}\,
  \cos\!\left(
    \hat\theta_{i,j}
    + \theta_i^\psi
    + \arg(a_j)
    - \arg(\phi_j - b_j)
  \right).
  \]
  Hence it will favor simultaneously (i) larger weights $p_{i,j}\, \hat r_{i,j}$ as well as (ii) external field alignment i.e.\ when $\hat\theta_{i,j}$ leads to an alignment of input phase $\theta_i^\psi$ with the output phase $\arg(\phi_j - b_j)-\arg(a_j)$ (including correction by bias and inertia).
  In other words, the synapse adjusts its internal phase so as to compensate the phase mismatch between presynaptic and postsynaptic oscillatory activity. This is a phase–Hebbian mechanism: \emph{structural phase alignment occurs when the two units fire together}.
\end{remark}

\begin{remark}[Phase-locking by large output stimulation and spectral memory as a repository of stable phase offsets.]\label{rmk:offsets-repo}
  When the output residual activation ($\phi_j$) is large, the alignment mechanism above contributes to shaping the spectral memory. The spectral memory acts as a repository of phase offsets that repeatedly and consistently occur with large input/output coactivations. As learning progresses and predictions become more accurate, the effective phase differences stabilise, and the structural phases $\theta_{i,j}$ converge accordingly. The plasticity dynamics therefore implements a structural phase-locking mechanism, learning and storing \emph{stable phase relationships} between input and output activity patterns.
\end{remark}

The explicit form of the energy provides a convenient basis for a local parametric analysis, which is the purpose of the following section.

\subsubsection{Local parametric analysis: pruning, selectivity, pressure equalisation, synchronization, alignment}
\label{sec:parametric}

We now analyse the fiberwise structure of the energy to reveal mechanisms governing pruning, selectivity, synchronization and phase alignment. The following analysis is qualitative and aims at revealing the mechanisms encoded in the energy landscape, rather than providing a full classification of equilibria.

A reasonable assumption is to assume each $\mathrm F_j$ sufficiently regular to locally admit an affine approximation in an open neighbourhood $\mathcal U\in\mathbb C$ of the form
\[
\mathrm F_j(g)=\mathrm F_j(g_0)+\mathrm F'_j(g_0)\,(g-g_0)+o(g-g_0),
\quad\text{for fixed } g_0\in\mathcal U \quad\text{and all }g\in\mathcal U.
\]
In the following analysis, we neglect the $o(g)$ term (which actually vanishes in the case of $\mathrm F_j$ affine by parts). In this setting, the structure of the energy reveals three fundamental mechanisms:

\paragraph{i. Amplitude equilibrium and pruning.}
The partial derivative with respect to $\hat r_{i,j}$ is
\begin{align*}
  \frac{\partial E}{\partial \hat r_{i,j}}
    &= |a_j|^2 |\psi_i|^2 p_{i,j}^2 \hat r_{i,j}
     + |a_j|^2 |\psi_i| p_{i,j}
       \sum_{i'\neq i}
         |\psi_{i'}| p_{i',j} \hat r_{i',j}
         \cos((\hat\theta_{i,j} + \theta_i^\psi)
             - (\hat\theta_{i',j} + \theta_{i'}^\psi)) \\
    &\qquad
     - |a_j|\,|\phi_j - b_j|\,|\psi_i|\,p_{i,j}
       \cos(\hat\theta_{i,j}
           + \theta_i^\psi
           + \arg(a_j)
           - \arg(\phi_j - b_j)).
\end{align*}
Either $a_j= 0$ (the synapse is ignored in $\mathcal U$), or $p_{i,j}=0$ (structural pruning), or $y_i$ is not activated ($\psi_i=0$, hence no Hebbian phenomenon involved), or the equilibrium magnitude is theoretically
\begin{align*}
  \hat r_{i,j}^{\mathrm{eq}}
    &= \frac{|\phi_j - b_j|}
            {|a_j|\,|\psi_i|\,p_{i,j}}
       \cos(\hat\theta_{i,j}
             + \theta_i^\psi
             + \arg(a_j)
             - \arg(\phi_j - b_j)) \\
    &\qquad
       - \sum_{i'\neq i}
           \frac{|\psi_{i'}| p_{i',j}}
                {|\psi_i| p_{i,j}}
           \hat r_{i',j}
           \cos((\hat\theta_{i,j} + \theta_i^\psi)
               - (\hat\theta_{i',j} + \theta_{i'}^\psi)).
\end{align*}
Since
\[
  \frac{\partial^2 E}{\partial \hat r_{i,j}^2}
    = |a_j|^2 |\psi_i|^2 p_{i,j}^2 \ge 0,
\]

$\hat r_{i,j}^{\mathrm{eq}}$ is a local minimum and
if $\hat r_{i,j}^{\mathrm{eq}}<0$, and unless the joint evolution of the other variables leads to lift $\hat r_{i,j}^{\mathrm{eq}}$ above $0$, the constraint $\hat r_{i,j}\ge 0$ enforces pruning ($\hat r_{i,j}\to \hat r_{i,j}^{\mathrm{eq}}$ until reaching $0$). Note that a stricter constraint could be given by considering the frontier of $\mathcal U$: we do not detail such consideration in this paper.

\paragraph{ii. Pressure equalization, pruning and selectivity.}
For fixed amplitudes and phases, the optimization of the structural weights $(p_{i,j})_i$ is constrained by the simplex condition $\sum_i p_{i,j}=1$, $p_{i,j}\ge 0$. Introducing a Lagrange multiplier $\lambda_j$ for the equality constraint, the stationarity condition reads
\[
  \frac{\partial E}{\partial p_{i,j}} = - \lambda_j
  \qquad\text{for all } i \text{ with } p_{i,j}>0,
\]
together with the complementary slackness condition
\[
  p_{i,j}=0 \;\Rightarrow\;
  \frac{\partial E}{\partial p_{i,j}} + \lambda_j \ge 0.
\]
Since
\[
  \frac{\partial^2 E}{\partial p_{i,j}^2}
    = |a_j|^2 |\psi_i|^2 \hat r_{i,j}^2 \ge 0,
\]
the energy is convex in each $p_{i,j}$, and interior equilibria satisfy
\[
  \frac{\partial E}{\partial p_{i,j}}
  = \frac{\partial E}{\partial p_{i',j}}
  \qquad\text{for all active indices } i,i'.
\]
Thus, active synapses equalize their ``energy pressure'' $\partial E/\partial p_{i,j}$, while inactive synapses lie on the boundary of the simplex.

In particular, if the unconstrained equilibrium value $p_{i,j}^{\mathrm{eq}}$ obtained from $\partial E/\partial p_{i,j}=0$, i.e.\
\begin{align*}
  p_{i,j}^{\mathrm{eq}}
    &= \frac{|\phi_j - b_j|}
            {|a_j|\,|\psi_i|\,\hat r_{i,j}}
       \cos(\hat\theta_{i,j}
             + \theta_i^\psi
             + \arg(a_j)
             - \arg(\phi_j - b_j)) \\
    &\qquad
       - \sum_{i'\neq i}
           \frac{|\psi_{i'}| \hat r_{i',j}}
                {|\psi_i| \hat r_{i,j}}
           p_{i',j}
           \cos((\hat\theta_{i,j} + \theta_i^\psi)
               - (\hat\theta_{i',j} + \theta_{i'}^\psi)),
\end{align*}
satisfies $p_{i,j}^{\mathrm{eq}}<0$, the Karush-Kuhn-Tucker condition forces $p_{i,j}$ towards $0$, corresponding to \emph{structural pruning}. Conversely, if $p_{i,j}^{\mathrm{eq}}>1$, the simplex constraint forces all other weights towards vanishing, yielding \emph{selectivity}. Interior equilibria occur only when all active weights share the same value of $\partial E/\partial p_{i,j}$.

\paragraph{iii. Synchronization and alignment.}
$p_{i,j}^{\mathrm{eq}}$ and $\hat r_{i,j}^{\mathrm{eq}}$ actually follow the same equation, which gives the following \emph{Hebbian equilibrium equation}:
\begin{align*}
  |\psi_i|\,p_{i,j}^{\mathrm{eq}}\,\hat r_{i,j}^{\mathrm{eq}}
    &= \frac{|\phi_j - b_j|}
            {|a_j|}
       \cos(\hat\theta_{i,j}^{\mathrm{eq}}
             + \theta_i^\psi
             + \arg(a_j)
             - \arg(\phi_j - b_j)) \\
    &\qquad
       - \sum_{i'\neq i}
           |\psi_{i'}| p_{i',j}^{\mathrm{eq}}
           \hat r_{i',j}^{\mathrm{eq}}
           \cos((\hat\theta_{i,j}^{\mathrm{eq}} + \theta_i^\psi)
               - (\hat\theta_{i',j}^{\mathrm{eq}} + \theta_{i'}^\psi)),
\end{align*}
satisfied by the theoretical equilibrium point $(p_{i,j}^{\mathrm{eq}},\hat r_{i,j}^{\mathrm{eq}},\hat\theta_{i,j}^{\mathrm{eq}})$ (``theoretical'' in the sense that it may never be attained if $p_{i,j}^{\mathrm{eq}}\notin [0,1]$ or $\hat r_{i,j}^{\mathrm{eq}}<0$). This equation can be rewritten as
\begin{equation}
  \label{eq:hebb-eq-eq}
  \sum_{i'=1}^M
      |\psi_{i'}| p_{i',j}^{\mathrm{eq}}
      \hat r_{i',j}^{\mathrm{eq}}
      \cos((\hat\theta_{i,j}^{\mathrm{eq}} \!+\! \theta_i^\psi)
          - (\hat\theta_{i',j}^{\mathrm{eq}} \!+\! \theta_{i'}^\psi))
  = \frac{|\phi_j \!-\! b_j|}{|a_j|}
      \cos(\hat\theta_{i,j}^{\mathrm{eq}}
           \!+\! \theta_i^\psi
           \!+\! \arg(a_j)
           \!-\! \arg(\phi_j \!-\! b_j)).
\end{equation}

(iii.a.) If the internal phases satisfy
\[
  \cos((\hat\theta_{i,j} + \theta_i^\psi)
      - (\hat\theta_{i',j} + \theta_{i'}^\psi)) > 0
  \quad\text{for all } i,i',
\]
the synapses are said to be globally \emph{synchronized} relative to a fixed activation input $\psi$. By the Hebbian equilibrium equation~\eqref{eq:hebb-eq-eq}, the equilibrium phase $\theta_{i,j}^{\mathrm{eq}}$ satisfies
\[
\cos(\hat\theta_{i,j}^{\mathrm{eq}}
     \!+\! \theta_i^\psi
     \!+\! \arg(a_j)
     \!-\! \arg(\phi_j \!-\! b_j)) \ge 0.
\]
Hence the dynamics pushes the spectral memory to compensate the input phase towards \emph{alignment} with the output field (corrected by bias and inertia).
In other words, synchronized groups pushes towards phase alignment, which recovers a \emph{phase locking} phenomenon.

Note that, if
\[
  \cos(\hat\theta_{i,j}
        + \theta_i^\psi
        + \arg(a_j)
        - \arg(\phi_j - b_j)) < 0,
\]
then both
\[
\frac{\partial E}{\partial \hat r_{i,j}} > 0,
\qquad\text{and}\qquad
\frac{\partial E}{\partial p_{i,j}} >0,
\]
and since we have the constraints $\hat r_{i,j}\ge 0$ and $p_{i,j}\ge 0$, this forces both structural weights and amplitude towards vanishing i.e.\ pruning: the regime is unstable as expected.

(iii.b) Conversely, having
\[
  \cos((\hat\theta_{i,j} + \theta_i^\psi)
      - (\hat\theta_{i',j} + \theta_{i'}^\psi)) < 0
  \quad\text{for all } i,i',
\]
is only possible in a group of at most 3 synapses, since on the circle, a set of points with all pairwise negative cosine differences cannot exceed three elements. In such a limited group of ``anti''-synchronized neurons, we have
\[
\cos(\hat\theta_{i,j}^{\mathrm{eq}}
     \!+\! \theta_i^\psi
     \!+\! \arg(a_j)
     \!-\! \arg(\phi_j \!-\! b_j)) \le 0,
\]
and the group can be said to be ``anti''-aligned. We can check that group alignment, i.e.\
\[
  \cos(\hat\theta_{i,j}
        + \theta_i^\psi
        + \arg(a_j)
        - \arg(\phi_j - b_j)) > 0,
\]
may push some $p_{i,j}^{\mathrm{eq}}>1$, pushing towards selectivity, pruning the other connections.

In large groups of active synapses ($\gg\! 3$), stable non-degenerate equilibria would therefore generally correspond to \emph{synchronization + alignment}.

\begin{remark}[Spectral memory interpretation]
  This generalizes the phase-locking and stable phase offsets memory mechanisms discussed in Remark~\ref{rmk:offsets-repo}, adding synchronization as a joint phenomenon.
\end{remark}

\begin{remark}[From frequency mismatch to frequency locking in dynamic spectral memory: a Kuramoto dynamics]
  Spectral memory only stores phase distribution and not frequencies. Consider now the case in which the input and output phases evolve as
  \[
  \theta_i^\psi(t) = \omega_{\mathrm{in}} t + \theta_i^0,
  \qquad
  \theta_j^\phi(t) = \omega_{\mathrm{out}} t + \theta_j^0,
  \]
  with possibly distinct frequencies $\omega_{\mathrm{in}} \neq \omega_{\mathrm{out}}$. The effective phase difference that the Tan--HWG plasticity attempts to compensate is then
  \[
  \Delta\theta_{i,j}^{\mathrm{target}}(t)
  = \theta_j^\phi(t) - \theta_i^\psi(t)
  = (\omega_{\mathrm{out}} - \omega_{\mathrm{in}})\,t + C_{i,j}.
  \]
  When $\omega_{\mathrm{out}} = \omega_{\mathrm{in}}$, this target offset is constant and the structural phase $\theta_{i,j}$ can converge to a stable value, yielding a genuine phase memory. In contrast, when $\omega_{\mathrm{out}} \neq \omega_{\mathrm{in}}$, the target offset drifts linearly in time. A static spectral memory cannot encode such a drift: the phase that the plasticity attempts to learn is no longer a fixed point but a continuously rotating quantity. In this regime, the synapse would need to ``rotate'' its structural phase at a rate
  \[
  \omega_{i,j} = \omega_{\mathrm{out}} - \omega_{\mathrm{in}},
  \]
  which effectively defines a \emph{learned intrinsic frequency}. However, such a frequency cannot be stored as a static synaptic parameter: it must be \emph{maintained dynamically} by an internal oscillatory mechanism.

  A biologically plausible solution is the presence of a recurrent loop or microcircuit generating a self-sustained oscillation whose phase evolves as $\theta_{i,j}(t) = \omega_{i,j} t + \theta_{i,j}^0$. This internal oscillator provides the synapse with a continuously rotating reference phase, allowing the spectral memory to encode stable offsets relative to this internally generated drift. Such self-sustained oscillations are well documented in cortical, thalamic, hippocampal and cerebellar circuits \parencite{buzsaki2006rhythms,wang2010neurophysiological,lisman2013theta,lampl1999synchrony}, where recurrent excitation--inhibition loops maintain intrinsic frequencies that interact with external inputs through phase-locking mechanisms.

  In summary, a frequency mismatch $\omega_{\mathrm{out}} \neq \omega_{\mathrm{in}}$ cannot be stored as a static spectral memory. Instead, it requires an internally generated oscillation that continuously maintains the appropriate drift. The Tan--HWG plasticity then performs structural phase-locking relative to this internal oscillator, rather than memorising a frequency as a fixed synaptic parameter.

  This ``internal'' reference frequency, together with synchronization mechanisms, is reminiscent of a Kuramoto-like dynamics. However, we did not need to introduce an additional oscillatory interaction equation. In inference mode (e.g.\ when no feedback output activation signal is given), the spectral memory acts as a spectral filter favoring compatible input phases $\theta^\psi_i$. Hence, in this perspective, the synchronization of the $\theta^\psi_i$ comes from phase filtration rather than from an explicit synchronization force (although such an additional force would not be incompatible with the framework).
\end{remark}

\begin{remark}[Functional analysis.]
  At the level of the memory field $\rho$, the plasticity dynamics could be rigorously described as a Tan–HWG dynamics.
  In the simple affine case, the local alignment energy takes the form
  \begin{align*}
    \mathcal E_x(\rho_{x}| (\psi, \phi))
      :=\,& \frac12|A(x)|^2 \iint_{(Y_M\times\mathbb C)^2} K((y,z),(y',z')|\psi)\,\mathrm d\rho_x(y,z)\mathrm d\rho_x(y',z')\\
      &-\Re\left[\overline{(\phi(x)\!-\!B(x))}A(x)\int_{Y_M\times\mathbb C} V(y,z|\psi)\,\mathrm d\rho_x\right]\\
      &+|\phi(x)-B(x)|^2,
  \end{align*}
  where
  \[
  K((y,z),(y',z')|\psi):=\psi(y)\overline\psi(y')z\,\overline{z'},
  \qquad\text{and}\qquad
  V(y,z|\psi):=\psi(y)z.
  \]
  The qualitative analysis in terms of amplitudes, phases, and structural weights can be seen as a parametric reading of this underlying variational structure. An explicit Wasserstein gradient flow formulation of $\rho_x$ may be given, leading to a \emph{Hebbian plasticity equation}
  \begin{equation}\label{eq:hebb-plasticity}
    \partial_t\rho_x = \nabla\cdot\left(\rho_x\nabla\frac{\delta \mathcal E_x}{\delta\rho_x}\right),
  \end{equation}
  where, in the affine case, we have
  \[
  \frac{\delta \mathcal E_x}{\delta\rho_x}(y,z)
    =|A(x)|^2 \int_{Y_M\times\mathbb C} K((y,z),(y',z')|\psi)\,\mathrm d\rho_x(y',z')
      -\Re\left[\overline{(\phi(x)\!-\!B(x))}A(x) V(y,z|\psi)\right].
  \]
  A full analytical characterization of the dynamics would require additional structural assumptions. In this work, we therefore focus on the qualitative mechanisms revealed by the energy landscape (pruning, selectivity, synchronization and alignment), rather than on a complete classification of trajectories.
\end{remark}

\subsubsection{Discussion}

\paragraph{On the functional role of the structural weights.} The structural weights $p_{i,j}$ have distinguished features, which are otherwise not covered when solely considering amplitudes $Z_{i,j}$:
\begin{itemize}
  \item \emph{Synaptic competition.}
  The simplex constraint $\sum_i p_{i,j}=1$ gives a representation of synaptic competition over resources: the postsynaptic neuron can only provide limited attention to its synapses. Amplitudes and structural weights both show pruning capacities, but solely structural weights are subject to selectivity, i.e.\ competition pushed to its degenerate limit when only one synapse dominates, pushing the others to vanish. Note that selectivity here refers to functional dominance, not anatomical elimination.

  \item \emph{Semantic bumper for stability and robustness of informational embedding.}
  Since the energy is a function of the product $p_{i,j}\cdot\hat r_{i,j}e^{\mathrm i\hat\theta_{i,j}}$, small perturbations of the input or of the synaptic amplitudes can be absorbed either by adjusting the fibre laws $\mu_{i,j}$ (via $\hat r_{i,j},\hat\theta_{i,j}$) or by redistributing mass across inputs (via $p_{i,j}$), which enhances robustness to noise. If we interpret amplitudes $(\hat r_{i,j}e^{\mathrm i\hat\theta_{i,j}})$ as semantic or informational embeddings, the layer of structural weights $p_{i,j}$ can act as a stabilizer of such embeddings.
  When facing repeated activations $(\psi,\phi)$ that would otherwise tend to modify the embedding (e.g.\ presenting incoherences or small perturbations with previous activation profiles), the discrepancy can be partially absorbed by the structural weights. It allows robust embeddings, maintaining coherence across inputs/outputs activations.
  This absorption capacity has limits, in particular since $p_{i,j}\in [0,1]$, and since active synapses are pushed towards respecting the stabilized energy pressure attractor $\partial E/\partial p_{i,j}=-\lambda_j$ at postsynaptic neuronal level. These limits can be seen as gating transitions from a semantic value to another, which can be linked to the phase transition feature (see next item).

  \item \emph{Phase transitions.}
  In this setting, a phase transition refers to a qualitative change in the structure of the minimizer of the local energy $\mathcal E_x$. More precisely, it occurs when the optimal structural weights $(p_{i,j})_{i=1\dots M}$ move from an interior point of the simplex (all synapses active) to a boundary point (some $p_{i,j}=0$), or conversely.
  Such transitions correspond to changes in the active support of $\rho_x$ and are induced by the geometry of the energy landscape. They are invisible when optimising only over amplitudes, since the simplex constraint is essential for the emergence of these structural bifurcations.
\end{itemize}

\paragraph{Comparison with existing architectures.}
The separation between structural weights $p_{i,j}$ and semantic complex amplitudes $z_{i,j}=\hat r_{i,j}e^{\mathrm i\hat\theta_{i,j}}$ may evoke mechanisms found in several existing models, but the proposed construction differs from all of them in essential ways.
\begin{itemize}
  \item \emph{Mixture models and EM-type architectures.}
  Gaussian mixture models and related EM algorithms also separate mixture weights from component parameters. However, these models lack any notion of synaptic synchronization or alignment.
  \item \emph{Attention mechanisms and mixture-of-experts.}
  Transformer attention and MoE architectures use routing weights (softmax scores or gating coefficients), which is reminiscent of the structural weights $p_{i,j}$ forming a probability distribution over input locations and modulating the contribution of semantic synaptic states $z_{i,j}=\hat r_{i,j}e^{\mathrm i\hat\theta_{i,j}}$. Yet these weights do not induce pruning or selectivity through simplex boundary equilibria, nor do they account for synchronization and alignment phenomena.
  \item \emph{Sparse coding and dictionary learning.}
  Sparse coding separates activation coefficients from dictionary atoms, but the activations are continuous and do not represent structural mass allocation. There is no phase dynamics, no synchronization mechanism, and no structural pruning analogous to $p_{i,j}\to 0$.
  \item \emph{Bayesian nonparametrics.}
  Dirichlet-process mixtures also feature structural weights, but these arise from stochastic priors rather than from a deterministic energy landscape.
\end{itemize}

In summary, while several families of models exhibit partial analogies (routing vs.\ content, mixture weights vs.\ component parameters), none of them combine: (i) structural mass allocation on a simplex, (ii) semantic complex amplitudes with phase interactions, (iii) a Wasserstein gradient flow on a mixed discrete--continuous space, and (iv) emergent pruning and selectivity through KKT boundary equilibria.

\paragraph{Practical advantages}
The geometric and dynamical structure of the proposed model leads to several practical advantages:
\begin{itemize}
  \item \emph{Intrinsic stability and convergence.}
  The present architecture is governed by a well posed minimizing scheme relative to an explicit energy, yielding intrinsic stability, convergence to equilibrium states, and robustness to noise through mass redistribution and phase synchronization.

  \item \emph{Emergent sparsity and potential computational benefits.}
  Attention weights in Transformers are dense and rarely reach exact zeros, unless sparsity is imposed externally. Here, the simplex constraint on $(p_{i,j})_i$ and the associated KKT conditions naturally produce structural pruning and selectivity: entire synaptic branches may vanish at equilibrium. This leads to adaptive sparsity, reduced computational cost, and improved interpretability. This opens the door to adaptive, data-driven sparsification without additional regularization or architectural heuristics.

  \item \emph{Geometric coherence and phase interactions.}
  Classic neural networks operate in a flat Euclidean space and lacks an intrinsic geometric structure. The proposed model evolves in a rich, intrinsically curved, CAT(0) Polish space, $\mathcal P_2(Y_M\times\mathbb C)$ ($Y_M$ being implicitly lifted into a CAT(0) Polish space, see Section~\ref{sec:projection}), combining mass transport with complex-valued phase interactions. This ensures uniqueness of geodesics and convexity properties that are absent from classical Euclidean embeddings. This induces geometric coherence, synchronization phenomena, and alignment with external fields, which can improve robustness and temporal consistency in applications involving oscillatory or structured signals.
\end{itemize}

\section{Mapped energies}\label{sec:mapped-energy}

In Section~\ref{sec:networks} we introduced geometric representation maps, and we used a \emph{contextualized} geometric representation map of the form $\rho\mapsto \psi\,w(\rho)$ to define the alignment energy. In this section, we generalize this construction.

\begin{definition}[Mapped energy]
  A Hebbian energy is said to be a \emph{mapped energy} if its local density has the form
  \[
    \mathcal E_x(\rho_x,z)=\widetilde{\mathcal E}_x(\mathcal H_x(\rho_x,z),z),
    \qquad \rho_x\in\mathcal P_2(Y), z\in Z,
  \]
  where $\mathcal H$ is said to be a \emph{contextualized geometric representation map} or \emph{Hebbian map} $\mathcal H=\lbrace \mathcal H_x\rbrace_{x\in X}$ where
  \[
    \mathcal H_x:\mathcal P_2(Y)\times Z\to Z,
    \quad x\mapsto\mathcal H_x \text{ measurable}.
  \]
  When $Z$ is a Hilbert space, we say that the mapped energy is a \emph{Hilbertian mapped energy}.
\end{definition}

The term ``Hebbian'' in the name Hebbian map, reflects the fact that $\rho_x\mapsto\mathcal H_x(\rho_x,z)$ is a geometric representation map contextualized by a Hebbian global signal $z$.

Mapped energies provide a unifying framework for expressing Hebbian-type energies as compositions of geometric maps with classical Euclidean costs. This makes it possible to analyze the variational structure of complex neural architectures by studying the regularity of the underlying geometric maps.

\subsection{Examples: energies induced by activation functions}

Hilbertian mapped energies conveniently map probability distributions $\rho_x$ to vectors in $Z$. Then, this vectorial representation allows us to bridge the underlying Wasserstein geometry with classical alignment or correlation ``cost'' functions.

\paragraph{Alignment energy.}
For instance, the alignment energy, defined in Section~\ref{sec:networks}, is a special case of Hilbertian mapped energy with
\[
  E(\rho,S(\rho,\phi))=\frac12 \sum_{j=1}^N \left(\mathcal H_{x_j}(\rho_{x_j}, (\psi_{\cdot,j}, \phi_j)) - S_{x_j}(\rho, (\psi, \phi)) \right)^2,
  \qquad S_{x_j}(\rho, (\psi, \phi))=(\psi_{\cdot,j}, \phi_j),
\]
and
\[
  \mathcal H_{x_j}(\rho_{x_j}, (\psi_{\cdot,j}, \phi_j))
    =\left(\psi_{\cdot,j},\,\mathrm F_j(\psi_{\cdot,j}\cdot w_j(\rho_{x_j}))\right)
    =\left(\psi_{\cdot,j},\,\mathrm F_j\left(\sum_{i=1}^M\psi_{i,j}\cdot w_{i,j}(\rho_{x_j})\right)\right).
\]
where we used the general synaptic activation vector $\psi_{\cdot,j}=\sum_{i=1}^M \psi_{i,j}\mathbf 1_{y_i}$ (which depends on $x_j$; see Remark~\ref{rmk:synaptic-activation}).

\begin{remark}[Elementary contextualized map]
  The Hebbian map $\mathcal H$ is not unique: the alignment energy can also be written
  \[
    E(\rho, \psi, \phi)
    =\frac12\sum_{j=1}^N\left( \mathrm F_j\left(
        \int \psi(y')z\,\mathrm d\rho_{x_j}(y',z)
      \right)-\phi_j\right)^2
    =\widetilde{\mathcal E}_x(\mathrm{ps}_j(\rho_{x_j},\psi),z),
  \]
  using
  \[
    \mathrm{ps}_j(\rho_{x_j},\psi)=\int \psi(y')z\,\mathrm d\rho_{x_j}(y',z),
  \]
  as the contextualized geometric representation map. More generally, we may factor ``elementary'' contextualized maps within mapped energies. Elementary features of such building blocks, like geodesic affinity and Lipschitz continuity, inform us on the behavior of the memory dynamics (e.g.\ see Section~\ref{sec:multi-scale}).

  Mapped energies allow us to decompose complex neural architectures into elementary geometric maps, whose regularity properties directly control the behavior of the memory dynamics.
\end{remark}

\subsection{$d_Z$--isotropic, $d_Z$-anisotropic and $d_Z$-quadratic energies}\label{sec:d_Z-isotropic}

The present section can be seen as a generalization of Section~\ref{sec:quadratic}, where we had $Z=\mathcal P_2(Y)$ and $\mathcal H_x(\cdot, z)\equiv\mathrm{Id}_{\mathcal P_2(Y)}$. In particular:
\begin{itemize}
  \item \emph{$d_Z$-isotropic energy:} We can define a class of $d_Z$-isotropic energies, with densities of the form
  \[
  \mathcal E_x(\cdot, z):=\mathcal F_x(d_Z(\mathcal H_x(\cdot,z), z)),
  \quad z\in Z.
  \]
  \item \emph{$d_Z$-anisotropic energy:} Use a bi-Lipschitz equivalent distance on $Z$, to define a $d_Z$-anisotropic energy.
  \item \emph{$d_Z$-quadratic energy:} When $Z$ is CAT(0) and when $\mathcal H_x(\cdot,z)$ is geodesically convex for $\mu$-a.e.\ $x$, then we can define $d_Z$-quadratic Tan--HWG energies with densities of the form
  \[
    \mathcal E_x(\cdot,z):=f_x(d_Z^2(\mathcal H_x(\cdot,z),z)),
  \]
  where $f_x$ is proper, l.s.c. and convex nondecreasing, with $x\mapsto f_x$ measurable, and
  \[
    f_x(0)<+\infty \text{ $\mu$-a.e.\ and } (x\mapsto f_x(0)\in \mathrm L^1).
  \]
  \item \emph{State-independent contraction factor:} Assuming $\mathcal H_x(\cdot, z)$ is geodesically affine and $1$-Lipschitz, we can obtain a generalized state-independent contraction factor theorem for generalized quadratic energy (see Definition~\ref{def:general-quadratic}), with a coefficient $t_{\tau,x}\in(0,1)$ \emph{independent of $\rho_x^n$} such that
  \[
    \mathcal H_x(\rho_x^{n+1},h^n_x)=h_{x,t_{\tau,x}}^n,
    \qquad
    d_Z(\mathcal H_x(\rho_x^n,h^n_x),h_{x,t_{\tau,x}}^n)=t_{\tau,x}\,d_Z(\mathcal H_x(\rho_x^n,h^n_x),h^n_x),
  \]
  for $\mu$-a.e.\ $x\in X$, where $(h_{x,t}^n)_{t\in[0,1]}$ is the geodesic from $\mathcal H_x(\rho_x^n,h^n_x)$ to $h^n_x$. This also assumes $\mathcal H_x(\cdot,h^n_x)^{-1}(h_x^n)\neq\emptyset$. Note that, having $\mathcal H_x(\cdot, z)$ geodesically affine and $1$-Lipschitz means $\mathcal H_x(\cdot, z)$ is geodesically an isometry, which is a strong assumption: $\mathcal H_x(\cdot, z)$ strictly respects the underlying Wasserstein geometry.
  Such assumptions are rarely satisfied in practice, but they illustrate the structural behavior of mapped energies.
\end{itemize}

In particular, if we require convexity of the energy, $d_Z$-quadratic Tan--HWG energies provide a wide class of compatible energies.
However this construction excludes the use of weakly convex mapped energies.

\subsubsection{Generalized quadratic energy (GQE)}

We now present the class of energies that we will consider to establish a continuous-time limit curve in Section~\ref{sec:continuous-limit}. It is a generalization of the purely quadratic Tan--HWG energy class, using geometric representation maps.

\begin{definition}[Generalized quadratic energy]
  \label{def:general-quadratic}
  A \emph{generalized quadratic energy} (GQE), with parameter $\alpha>0$, and mapping $\mathcal H$, is a mapped energy of the form
  \[
  E(\rho,\phi)
  := \frac{\alpha}{2} \int_X d_Z^2\big(\mathcal H_x(\rho_x,S_x(\rho,\phi)), S_x(\rho,\phi)\big)\, \mathrm{d}\mu(x).
  \]
  When the energy is Tan--HWG, we say that it is a \emph{Tan--HWG GQE}.
\end{definition}

Note that, in the general case, we cannot guarantee the $\lambda$-convexity of a GQE.

\section{Vanishing time step limit curve}
\label{sec:continuous-limit}

Up to this point, the global signal---and in particular the context---has been treated as an arbitrary external input, since the Tan--HWG minimizing-movement scheme freezes it at each step. In particular, with a fixed timestep $\tau$, the context could be considered as a given fully external process, taken as an input for the system. In this section we will show that, under suitable structural assumptions on the signal and a coupling constraint between memory fields and contexts, the system may enter an autonomous, or quasi-autonomous regime, with a ``free evolution'' of the signal (i.e.\ under stationary or quasi-stationary external context). Remarkably, in this regime, Tan--HWG schemes admit continuous-time limit curves (Theorem~\ref{thm:tan-hwg-continuous}). This suggests that such limit curves are key features of memory consolidation, which is more likely to occur in quasi-stationary external context (e.g.\ ``sleep'').

\subsection{Regularity assumptions}\label{sec:assumptions}

We now give three regularity assumptions ---two structural and one contextual, under which, Tan--HWG minimizing movement schemes can admit continuous-time limits. Lipschitz continuity is the key structural assumption, while ``sleep-mode'' is the key contextual assumption: both will allow us to control the freezing error in the Tan--HWG scheme.

\subsubsection{First structural assumption: Lipschitz signal}

Consider a Hebbian energy
\[
E(\rho,\phi)
:= \int_X \mathcal E_x\big(\rho_x,S_x(\rho,\phi)\big)\, \mathrm{d}\mu(x).
\]

\begin{assumption}[Lipschitz signal]\label{A1}
  The global signal is Lipschitz, in the sense that there exists $L_S>0$ such that for $\mu$-a.e.\ $x\in X$,
  \[
    d_Z\big(S_x(\rho,\phi),S_x(\rho',\phi'))
    \le L_S \left( \mathcal W^2(\rho, \rho') + d^2_C(\phi, \phi') \right)^{1/2},
  \]
  for all $\rho, \rho'\in\mathcal X_\mu$, and all $\phi, \phi'\in\mathcal C_\mu$ i.e.\ for $\mu$-compatible contexts.
\end{assumption}

\subsubsection{Second structural assumption: Lipschitz map}

Consider a mapped energy
\[
E(\rho,\phi)
:= \int_X \widetilde{\mathcal E}_x\big(\mathcal H_x(\rho_x,S_x(\rho,\phi)), S_x(\rho,\phi)\big)\, \mathrm{d}\mu(x).
\]

\begin{assumption}[Lipschitz map]\label{A2}
  The contextualized geometric representation map is Lipschitz, in the sense that there exists $L_H>0$ such that for $\mu$-a.e.\ $x\in X$,
  \[
    d_Z\big(\mathcal H_x(\rho_x,z),\mathcal H_x(\rho'_x,z'))
    \le L_H \left( W^2_2(\rho_x, \rho'_x) + d_Z^2(z,z') \right)^{1/2},
  \]
  for all $\rho_x, \rho'_x\in\mathcal P_2(Y)$, and all $z, z'\in Z$.
\end{assumption}

\subsubsection{Contextual assumption: the sleep-mode assumption}
Consider a Tan--HWG energy
\[
E(\rho,\phi)
:= \int_X \mathcal E_x\big(\rho_x,S_x(\rho,\phi)\big)\, \mathrm{d}\mu(x).
\]
We consider that when the system is left in free evolution, with no external stimulation, it presents an autonomous regime, where activations (contexts) depend solely on internal parameters, mainly the memory field. The discrete evolution of the context can be written
\[
  \phi^{n+1}_\tau:= \Phi_\tau^E(\rho^n_\tau | \phi^n_\tau) + \epsilon^{n+1}_\tau,
\]
where the operator $\Phi_\tau^E$ describes the purely internal evolution, and $\epsilon^{n+1}_\tau$ represents externally driven activations (e.g.\ sensory driven activations).

The sleep-mode assumption formalizes a regime in which internally driven, stable neural dynamics dominate external perturbations ($\epsilon^{n+1}_\tau$).

\begin{assumption}[The sleep-mode assumption]\label{A3}
  There exists $L_C>0$ such that, for every timestep $\tau>0$ and every discrete Tan--HWG sequence $(\rho^n_\tau)_{n\ge0}$ associated with $E$, and with a $\mu$-compatible context sequence $(\phi^n_\tau)_{n\ge0}\in\mathcal C_\mu^{\mathbb N}$, we have
  \[
    d_C(\phi^n_\tau, \phi^{n+1}_\tau) \le L_C\mathcal W(\rho^n_\tau, \rho^{n+1}_\tau),
    \qquad \rho^{n+1}_\tau\in T^E_\tau(\rho^n_\tau\,|\,\phi^n).
  \]
\end{assumption}
We also say that $\phi$ presents \emph{quasi-stationary external context}. This is a coupling constraint between $(\rho^n_\tau)_n$ and $(\phi^n_\tau)_n$.
This assumption formalizes the idea that consolidation occurs when external perturbations are weak relative to intrinsic neural dynamics.
In this regime, the external context perceived by the network is quasi-stationary, so that sensory-driven fluctuations are minimal. As a consequence, neuronal activations evolve in a calm, weakly perturbed dynamical state. The network operates in a nearly unconstrained ``free evolution'' mode, where the intrinsic flow of neural activity dominates over externally imposed variations.

\begin{remark}
  If the context depends solely on the memory field, i.e.\ $\phi=\phi(\rho)$, then the sleep-mode assumption reduces to a Lipschitz regularity condition on $\phi$.
  This corresponds to a fully \emph{memory driven free evolution} regime: the system is isolated from external stimuli (e.g.\ sleep), and internal contextual states evolve smoothly with the memory.

  In the general case, $\phi$ may include external influences, and other internal influences, such that the expression using the operator $\Phi_\tau^E$ is more general. In any case, the assumption enforces a \emph{quasi-free evolution}, independently of the dependencies of $\phi$: external variations must be sufficiently regular so as not to disrupt the internal Tan--HWG dynamics.
\end{remark}

\subsection{Continuous-time limit of generalized quadratic Tan--HWG dynamics}

\subsubsection{Control of the freezing error propagation}
\paragraph{Intuition.}
When the energy is Tan--HWG, the frozen energy is AGS compatible, which ensures that it is well behaved for a continuous-time limit, but only locally, relative to a fixed frozen signal. Structural Lipschitz assumptions and the sleep-mode assumption will ensure that the global signal's evolution is sufficiently smooth. From there, the key is to control the freezing error, and the propagation of this error. Indeed, when $\tau\to 0$, the number of steps increases relative to a given time horizon $T$ ($n\tau\le T$).

Therefore, we start by establishing a result allowing us to control the freezing error.

\begin{lemma}[Control of the freezing error]\label{lem:control-freezing}
  Assume the generalized quadratic energy
  \[
  E(\rho,\phi)
  := \frac{\alpha}{2} \int_X d_Z^2\big(\mathcal H_x(\rho_x,S_x(\rho,\phi)), S_x(\rho,\phi)\big)\, \mathrm{d}\mu(x)
  \]
  is a Tan--HWG energy. Under the regularity assumptions:
  \begin{itemize}
    \item \emph{Lipschitz signal} (Assumption~\ref{A1}):
    \[
      d_Z\big(S_x(\rho,\phi),S_x(\rho',\phi'))
      \le L_S \left( \mathcal W^2(\rho, \rho') + d^2_C(\phi, \phi') \right)^{1/2},
      \quad\text{$\mu$-a.e.},
    \]
    \item \emph{Lipschitz map} (Assumption~\ref{A2}):
    \[
      d_Z\big(\mathcal H_x(\rho_x,z),\mathcal H_x(\rho'_x,z'))
      \le L_H \left( W^2_2(\rho_x, \rho'_x) + d_Z^2(z,z') \right)^{1/2},
      \quad\text{$\mu$-a.e.},
    \]
  \end{itemize}
  there exists $L>0$ such that, for every timestep $\tau>0$ and every discrete Tan--HWG sequence $(\rho^n_\tau)_{n\ge0}$ associated with $E$, and with a $\mu$-compatible context sequence $(\phi^n_\tau)_{n\ge0}\in\mathcal C_\mu^{\mathbb N}$, under sleep-mode assumption
  \begin{itemize}
    \item \emph{sleep-mode} (Assumption~\ref{A3}):
    \[
      d_C(\phi^n_\tau, \phi^{n+1}_\tau) \le L_C\mathcal W(\rho^n_\tau, \rho^{n+1}_\tau),
    \]
  \end{itemize}
  we have the \emph{freezing error control inequality}
  \[
    \big|E(\rho_\tau^{n+1},\phi_\tau^{n+1})-E_{\phi_\tau^n,\rho_\tau^n}(\rho_\tau^{n+1})\big|
    \le \frac1{4\tau}\mathcal W^2(\rho_\tau^{n+1},\rho_\tau^n)
       + \alpha\,L^2\tau\,(E(\rho_\tau^{n+1},\phi_\tau^{n+1})+E(\rho_\tau^{n},\phi_\tau^{n})).
  \]
  Moreover, we have the \emph{pre-telescopic inequality}
  \[
  E(\rho_\tau^{n+1},\phi_\tau^{n+1}) + \frac1{4\tau}\mathcal W^2(\rho_\tau^{n+1},\rho_\tau^n)
  \le E(\rho_\tau^{n},\phi_\tau^{n}) + \alpha\,L^2\tau\,(E(\rho_\tau^{n+1},\phi_\tau^{n+1})+E(\rho_\tau^{n},\phi_\tau^{n})).
  \]
  and, for $\tau$ small enough, we have the \emph{Grönwall inequality}
  \[
    E(\rho_\tau^{n+1},\phi_\tau^{n+1})\le (1+4\alpha L^2\tau) E(\rho_\tau^n,\phi_\tau^n).
  \]
\end{lemma}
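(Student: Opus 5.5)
The plan is to work fiberwise and write both energies as integrals of squared distances. Set $h^n_x:=S_x(\rho^n_\tau,\phi^n_\tau)$ and $h^{n+1}_x:=S_x(\rho^{n+1}_\tau,\phi^{n+1}_\tau)$. Define
\[
a_x:=d_Z\big(\mathcal H_x(\rho^{n+1}_{\tau,x},h^{n+1}_x),h^{n+1}_x\big),\qquad
b_x:=d_Z\big(\mathcal H_x(\rho^{n+1}_{\tau,x},h^n_x),h^n_x\big).
\]
The two energies being compared are then $\tfrac{\alpha}{2}\int a_x^2\,\mathrm d\mu$ and $\tfrac{\alpha}{2}\int b_x^2\,\mathrm d\mu$.

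The first step bounds the distance drift. By the triangle inequality and Assumption~\ref{A2},
\[
|a_x-b_x|\le d_Z\big(\mathcal H_x(\rho^{n+1}_{\tau,x},h^{n+1}_x),\mathcal H_x(\rho^{n+1}_{\tau,x},h^n_x)\big)+d_Z(h^{n+1}_x,h^n_x)\le (L_H+1)\,d_Z(h^{n+1}_x,h^n_x).
\]
Assumption~\ref{A1} combined with the sleep-mode Assumption~\ref{A3} gives
\[
d_Z(h^{n+1}_x,h^n_x)\le L_S(1+L_C^2)^{1/2}\,\mathcal W(\rho^{n+1}_\tau,\rho^n_\tau).
\]
Together these yield $|a_x-b_x|\le L_0\,\mathcal W(\rho^{n+1}_\tau,\rho^n_\tau)$ uniformly in $x$, with $L_0:=(L_H+1)L_S(1+L_C^2)^{1/2}$.

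The second step writes $a_x^2-b_x^2=(a_x-b_x)(a_x+b_x)$ and applies Young's inequality with weights tuned to the target. For any $\varepsilon>0$,
\[
\tfrac{\alpha}{2}|a_x^2-b_x^2|\le \tfrac{\alpha}{2}\Big(\tfrac{1}{2\varepsilon}L_0^2\mathcal W^2+\tfrac{\varepsilon}{2}(a_x+b_x)^2\Big).
\]
Choosing $\varepsilon$ proportional to $\alpha\tau$ makes the first term exactly $\tfrac{1}{4\tau}\mathcal W^2$ after integration over $\mu$ (a finite measure). The second term becomes of order $\alpha\tau\,\alpha L_0^2\int(a_x^2+b_x^2)$. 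The $a_x^2$ part integrates to $E(\rho^{n+1}_\tau,\phi^{n+1}_\tau)$. The $b_x^2$ part is the frozen energy $E_{\phi^n,\rho^n}(\rho^{n+1}_\tau)$, and by the EDI of Theorem~\ref{thm:tan-hwg} it is bounded by $E_{\phi^n,\rho^n}(\rho^n_\tau)=E(\rho^n_\tau,\phi^n_\tau)$, since freezing at the current state recovers the true energy. Collecting the constants into $L$ (absorbing $\alpha$, $L_0$ and $\mu(X)$) gives the freezing error control inequality.

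The main obstacle is the bookkeeping of constants. With this balance, $L^2$ carries an extra factor of $\alpha$, and I would simply define $L$ to absorb it, since only its existence is claimed.

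The pre-telescopic inequality then follows by combining the EDI,
\[
E_{\phi^n,\rho^n}(\rho^{n+1}_\tau)+\tfrac{1}{2\tau}\mathcal W^2\le E(\rho^n_\tau,\phi^n_\tau),
\]
with the lower bound $E(\rho^{n+1}_\tau,\phi^{n+1}_\tau)\le E_{\phi^n,\rho^n}(\rho^{n+1}_\tau)+\tfrac{1}{4\tau}\mathcal W^2+\alpha L^2\tau(\cdots)$. This leaves $\tfrac{1}{4\tau}\mathcal W^2$ on the left-hand side.

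For the Grönwall inequality, drop the nonnegative $\mathcal W^2$ term to get
\[
(1-\alpha L^2\tau)\,E^{n+1}\le(1+\alpha L^2\tau)\,E^n.
\]
For $\alpha L^2\tau\le 1/2$, the elementary bound $(1+u)/(1-u)\le 1+4u$ on $[0,1/2]$ gives the claimed estimate, using $E\ge0$.
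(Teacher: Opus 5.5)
Your proposal is correct and follows the paper's proof essentially step by step. You use the same bound $|a_x-b_x|\le (1+L_H)L_S\sqrt{1+L_C^2}\,\mathcal W(\rho^{n+1}_\tau,\rho^n_\tau)$, the same factorization $a_x^2-b_x^2=(a_x-b_x)(a_x+b_x)$, the EDI to replace the frozen energy by $E(\rho^n_\tau,\phi^n_\tau)$, and the same $(1+u)/(1-u)\le 1+4u$ Grönwall step. The only difference is that you apply Young's inequality pointwise in $x$, whereas the paper applies Cauchy--Schwarz and then Young on the integrated quantities; the two are equivalent.

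Your worry about an extra factor of $\alpha$ is unfounded. With $\varepsilon=\alpha L_0^2\mu(X)\tau$, the second term equals $\tfrac{\alpha\varepsilon}{2}\int(a_x^2+b_x^2)\,\mathrm d\mu=\varepsilon\,\bigl(E(\rho^{n+1}_\tau,\phi^{n+1}_\tau)+E_{\phi^n_\tau,\rho^n_\tau}(\rho^{n+1}_\tau)\bigr)$. So $L^2=L_0^2\mu(X)$, which is exactly the paper's constant.
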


\begin{proof}
  Fix $\tau>0$. To simplify notations, we write $\rho^n:=\rho^n_\tau$, $\phi^n:=\phi^n_\tau$, $E^n:=E(\rho^n, \phi^n)$ and $S^n:=S(\rho^n,\phi^n)$.
  The frozen energy at step $n$ is
  \[
  E_{\phi^n,\rho^n}(\rho)
  := \frac\alpha2 \int_X d_Z^2\big(\mathcal H_x(\rho_x,S^n_x),S^n_x\big)\,\mathrm d\mu(x).
  \]

  \emph{Step 1: Error control.}
  We estimate the difference between the true and frozen energies at $\rho^{n+1}$:
  \begin{align*}
    \Delta_{n,n+1}
    :\!&\!= E^{n+1} - E_{\phi^n,\rho^n}(\rho^{n+1}).
  \end{align*}
  We have
  \begin{align*}
    |\Delta_{n,n+1}|
    &\!= \!\left|\frac\alpha2 \!\int_X \!\Big(
        d_Z^2\big(\mathcal H_x(\rho_x^{n+1},S_x^{n+1}),S_x^{n+1}\big)
        - d_Z^2\big(\mathcal H_x(\rho_x^{n+1},S_x^n),S_x^n\big)
      \Big)\mathrm d\mu(x)\right|\\
    &\!\le \frac\alpha2 \!\int_X\! \big(
        d_Z\big(\mathcal H_x(\rho_x^{n+1},S_x^{n+1}),S_x^{n+1}\big)
        \!+\!d_Z\big(\mathcal H_x(\rho_x^{n+1},S_x^n),S_x^n\big)
      \big)\delta_{n, n+1}
    \mathrm d\mu(x),
  \end{align*}
  with $\delta_{n, n+1}:=\big|
    d_Z\big(\mathcal H_x(\rho_x^{n+1},S_x^{n+1}),S_x^{n+1}\big)
    \!-\!d_Z\big(\mathcal H_x(\rho_x^{n+1},S_x^n),S_x^n\big)
  \big|$. By triangle inequalities we have
  \begin{align*}
    \delta_{n, n+1}
    &\le\! \big|
      d_Z\big(\mathcal H_x(\rho_x^{n+1},S_x^{n+1}),S_x^{n+1}\big)
      \!-\!d_Z\big(\mathcal H_x(\rho_x^{n+1},S_x^n),S_x^{n+1}\big)
    \big|\\
    &\qquad\qquad\!+\! \big|
      d_Z\big(\mathcal H_x(\rho_x^{n+1},S_x^n),S_x^{n+1}\big)
      \!-\!d_Z\big(\mathcal H_x(\rho_x^{n+1},S_x^n),S_x^n\big)
    \big|\\
    &\le\! d_Z\big(\mathcal H_x(\rho_x^{n+1},S_x^{n+1}),\mathcal H_x(\rho_x^{n+1},S_x^n)\big)
    \!+\!
      d_Z\big(S_x^{n+1},S_x^n\big)
  \end{align*}
  Then, by Assumptions~\ref{A1} and \ref{A2}
  \begin{align*}
    \delta_{n, n+1}
    &\le\! L_H\,d_Z(S_x^{n+1},S_x^n)
      \!+\!d_Z\big(S_x^{n+1},S_x^n\big)
      &\quad\text{(by Assumption~\ref{A2})}\\
    &\le (1+L_H)L_S\left(\mathcal W^2(\rho^{n+1},\rho^n)+d_C^2(\phi^{n+1},\phi^n)\right)^{1/2}
      &\quad\text{(by Assumption~\ref{A1})}\\
    &\le (1+L_H)L_S\left(\mathcal W^2(\rho^{n+1},\rho^n)
        +L_C^2\mathcal W^2(\rho^{n+1},\rho^n)\right)^{1/2}
      &\quad\text{(by Assumption~\ref{A3})}\\
    &\le (1+L_H)L_S\sqrt{1+L_C^2}\,\mathcal W(\rho^{n+1},\rho^n)\\
    &\le \tilde L\,\mathcal W(\rho^{n+1},\rho^n),
  \end{align*}
  with $\tilde L:=L_S(1+L_H)\sqrt{1+L_C^2}$. Thus, we obtain
  \begin{align*}
    |\Delta_{n,n+1}|
    &\!\le \frac\alpha2 \tilde L\,\mathcal W(\rho^{n+1},\rho^n)
      \!\int_X\! \big(
        d_Z\big(\mathcal H_x(\rho_x^{n+1},S_x^{n+1}),S_x^{n+1}\big)
        \!+\!d_Z\big(\mathcal H_x(\rho_x^{n+1},S_x^n),S_x^n\big)
      \big)
    \mathrm d\mu(x),
  \end{align*}
  By Cauchy--Schwarz,
  \begin{align*}
    |\Delta_{n,n+1}|
    &\!\le\! \frac\alpha2 \tilde L\mathcal W(\rho^{n+1}\!,\rho^n)
    \!\left[ \!\int_X \!\big(
        d_Z\big(\mathcal H_x(\rho_x^{n+1}\!,S_x^{n+1}),S_x^{n+1}\big)
        \!+\!d_Z\big(\mathcal H_x(\rho_x^{n+1}\!,S_x^n),S_x^n\big)
      \big)^2 \mathrm d\mu(x) \!\right]^{\!\frac12\!}
    \!\mu(X)^{\!\frac12\!}\\
    &\!\le\! \frac\alpha2 L\mathcal W(\rho^{n+1}\!,\rho^n)
    \!\left[ \!\int_X \!\big(
        d_Z\big(\mathcal H_x(\rho_x^{n+1}\!,S_x^{n+1}),S_x^{n+1}\big)
        \!+\!d_Z\big(\mathcal H_x(\rho_x^{n+1}\!,S_x^n),S_x^n\big)
      \big)^2 \mathrm d\mu(x) \!\right]^{\!\frac12\!},
  \end{align*}
  with $L:=\tilde L\,\mu(X)^{\!\frac12\!}$.
  Using $(a+b)^2\le 2(a^2+b^2)$, we obtain
  \begin{align*}
    |\Delta_{n,n+1}|
    &\le \sqrt{\alpha}\,L\,
      \sqrt{E^{n+1}+E_{\phi^n,\rho^n}(\rho^{n+1})}
      \,\mathcal W(\rho^{n+1}\!,\rho^n).
  \end{align*}

  Moreover, by the discrete energy inequality, $E_{\phi^n,\rho^n}(\rho^{n+1})\le E^n$.
  Hence
  \[
  |\Delta_{n,n+1}|
    \le \sqrt{\alpha}\,
    L\,\sqrt{E^{n+1}+E^n}\,\mathcal W(\rho^{n+1},\rho^n).
  \]
  Using Young's inequality $ab\le \frac1{4\tau}a^2 + \tau b^2$ with
  \(
  a=\mathcal W(\rho^{n+1},\rho^n),\
  b=\sqrt{\alpha}\,L\,\sqrt{E^{n+1}+E^n},
  \)
  we get
  \[
  \sqrt{\alpha}\,L\,\sqrt{E^{n+1}+E^n}\,\mathcal W(\rho^{n+1},\rho^n)
  \le \frac1{4\tau}\mathcal W^2(\rho^{n+1},\rho^n)
     + \alpha\,L^2\tau\,(E^{n+1}+E^n).
  \]
  Thus
  \[
  \big|E^{n+1}-E_{\phi^n,\rho^n}(\rho^{n+1})\big|
    \le \frac1{4\tau}\mathcal W^2(\rho^{n+1},\rho^n)
       + \alpha\,L^2\tau\,(E^{n+1}+E^n),
  \]
  which proves the first claim.

  \emph{Step 2: Pre-telescopic inequality.}
  Adding with the discrete energy inequality
  \[
  E_{\phi^n,\rho^n}(\rho^{n+1}) + \frac1{2\tau}\mathcal W^2(\rho^{n+1},\rho^n)
  \le E_{\phi^n,\rho^n}(\rho^n) = E(\rho^n, \phi^n)=E^n,
  \]
  gives
  \[
  E^{n+1} + \frac1{4\tau}\mathcal W^2(\rho^{n+1},\rho^n)
  \le E^n + \alpha\,L^2\tau\,(E^{n+1}+E^n).
  \]
  which proves the second claim.

  \emph{Step 3: Grönwall inequality.}
  We rewrite the previous inequality
  \[
  (1-\alpha\,L^2\tau)E^{n+1} + \frac1{4\tau}\mathcal W^2(\rho^{n+1},\rho^n)
  \le (1+\alpha\,L^2\tau)E^n.
  \]
  For $\tau$ small enough, $\alpha\,L^2\tau\le \frac12$, and we can write
  \[
  E^{n+1} + \frac1{4\tau(1-\alpha L^2\tau)}\mathcal W^2(\rho^{n+1},\rho^n)
  \le \frac{1+\alpha L^2\tau}{1-\alpha L^2\tau} E^n.
  \]
  Since $\frac{1+\alpha L^2\tau}{1-\alpha L^2\tau}\le 1+4\alpha L^2\tau$ whenever $\alpha L^2\tau\le \frac12$, we obtain
  \[
  E^{n+1} + \frac1{4\tau(1-\alpha L^2\tau)}\mathcal W^2(\rho^{n+1},\rho^n)
  \le (1+4\alpha L^2\tau) E^n.
  \]
  Then,
  \[
    E^{n+1} \le (1+4\alpha L^2\tau) E^n,
  \]
  which proves the final claim.
\end{proof}

This result allows us to control the propagation of the second moments:

\begin{lemma}[Stability of the dynamics]
  \label{lem:stability}
  Let $E$ be a Tan--HWG GQE, and assume Assumptions~\ref{A1} and \ref{A2} are true.
  Let $\tau>0$, and $(\rho^n_\tau)_{n\ge0}$ be the discrete Tan--HWG sequence associated with $E$,
  and with a $\mu$-compatible context sequence $(\phi^n_\tau)_{n\ge0}\in\mathcal C_\mu^{\mathbb N}$, under sleep-mode assumption~\ref{A3}, with initial conditions $\rho^0_\tau=\rho^0\in\mathcal X_\mu$ and $\phi^0_\tau=\phi^0\in\mathcal C_\mu$ independent of $\tau$.

  Then, for every $T>0$, there exists a constant $C_T>0$, independent of $\tau$, such that for all $n\in\mathbb N$ with $n\tau\le T$,
  \begin{align}
    \label{eq:stab-energy}
    E(\rho^n_\tau,\phi^n_\tau) &\le C_T
    \quad\text{(Energy stability)},\\[1mm]
    \label{eq:stab-action}
    \sum_{k=0}^{n-1}\frac1{\tau}\mathcal W^2(\rho^{k+1}_\tau,\rho^k_\tau) &\le C_T
    \quad\text{(Action stability)},\\[1mm]
    \label{eq:stab-moment}
    \int_X\int_Y d_Y(y,y_0)^2\,\mathrm d\rho^n_{\tau,x}(y)\,\mathrm d\mu(x) &\le C_T
    \quad\text{(Moment stability)},
  \end{align}
  for some (hence any) $y_0\in Y$, and for $\tau$ small enough.
\end{lemma}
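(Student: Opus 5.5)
The plan is to derive all three bounds from Lemma~\ref{lem:control-freezing}, applied with the constant $L$ given there and with $\tau$ small enough that $\alpha L^2\tau\le \tfrac12$.

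\emph{Energy stability.} We iterate the Grönwall inequality $E^{n+1}\le(1+4\alpha L^2\tau)E^n$. Since $1+x\le e^x$, this gives
\[
E(\rho^n_\tau,\phi^n_\tau)\le (1+4\alpha L^2\tau)^n E(\rho^0,\phi^0)\le e^{4\alpha L^2 n\tau}E^0\le e^{4\alpha L^2T}E^0 .
\]
Here $E^0:=E(\rho^0,\phi^0)$ is independent of $\tau$, because the initial data are. It is finite when $\rho^0$ has finite energy, which is implicitly required; if needed we would add this as a standing hypothesis. The energy is nonnegative since it is a GQE, so the bound is a genuine two-sided control.

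\emph{Action stability.} We use the pre-telescopic inequality in its rescaled form
\[
E^{k+1}+\frac{1}{4\tau}\mathcal W^2(\rho^{k+1},\rho^k)\le E^k+\alpha L^2\tau\,(E^{k+1}+E^k).
\]
Summing over $k=0,\dots,n-1$, the energies telescope, and using $E^n\ge0$ we get
\[
\frac14\sum_{k=0}^{n-1}\frac1\tau\mathcal W^2(\rho^{k+1},\rho^k)\le E^0+2\alpha L^2\,n\tau\,\sup_{k\le n}E^k .
\]
The supremum is controlled by the energy bound already obtained, so the right-hand side is at most $E^0+2\alpha L^2T e^{4\alpha L^2T}E^0$, which is independent of $\tau$.

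\emph{Moment stability.} Let $\delta_0$ be the field $x\mapsto\delta_{y_0}$. By the triangle inequality for $\mathcal W$ we have $\mathcal W(\rho^n,\delta_0)\le \mathcal W(\rho^0,\delta_0)+\sum_{k<n}\mathcal W(\rho^{k+1},\rho^k)$. Cauchy--Schwarz on the sum gives
\[
\sum_{k<n}\mathcal W(\rho^{k+1},\rho^k)\le\Big(n\tau\sum_{k<n}\tfrac1\tau\mathcal W^2(\rho^{k+1},\rho^k)\Big)^{1/2}\le \sqrt{T\,C_T}.
\]
Squaring, and noting that $\mathcal W^2(\rho,\delta_0)$ is exactly the $\mu$-averaged second moment about $y_0$, yields moment stability. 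This argument is the standard one for minimizing movements: $\tfrac12$-Hölder control of the discrete curve.

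\emph{Main obstacle.} The delicate point is justifying that Lemma~\ref{lem:control-freezing} applies uniformly, with one constant $L$ for all $\tau$ and all steps. This uses Assumptions~\ref{A1}--\ref{A3} with constants independent of $\tau$, together with $\mu(X)<\infty$. A further subtlety is that the absorption step requires $\alpha L^2\tau\le\tfrac12$, which fixes the threshold $\tau_0$ behind the phrase ``$\tau$ small enough''. Finally, the constant $C_T$ should be taken as the maximum of the three bounds above.
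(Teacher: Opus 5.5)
Your proof is correct. The energy and action bounds are exactly the paper's argument: you iterate the Grönwall inequality of Lemma~\ref{lem:control-freezing}, then sum the pre-telescopic inequality and insert the energy bound. You are also right that finiteness of $E(\rho^0,\phi^0)$ is tacitly assumed, in the paper as well.

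The moment bound is where you genuinely take a different route. Write $M(\rho):=\int_X\int_Y d_Y(y,y_0)^2\,\mathrm d\rho_x(y)\,\mathrm d\mu(x)$.

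The paper argues fiberwise with couplings. It integrates Young's inequality $d_Y(y,y_0)^2\le(1+\tau)\,d_Y(y',y_0)^2+(1+\tfrac1\tau)\,d_Y(y,y')^2$ against an optimal plan. This gives the one-step recursion $M(\rho^{n+1})\le(1+\tau)M(\rho^n)+(1+\tfrac1\tau)\mathcal W^2(\rho^{n+1},\rho^n)$. The paper sums this using the action constant $C_T^{(2)}$ and closes with a discrete Grönwall lemma, which needs the extra condition $\tau\le1$. The resulting constant is $(M(\rho^0)+2C_T^{(2)})e^{T}$.

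You instead note two facts. The constant field $x\mapsto\delta_{y_0}$ lies in $\mathcal X_\mu$ because $\mu$ is finite. And $M(\rho)=\mathcal W^2(\rho,\delta_{y_0})$, since the product plan is the only coupling with a Dirac mass. Moment control then reduces to the $\tfrac12$-Hölder estimate $\mathcal W(\rho^n,\rho^0)\le\sqrt{n\tau}\,\bigl(\sum_{k<n}\tfrac1\tau\mathcal W^2(\rho^{k+1},\rho^k)\bigr)^{1/2}$. This is the same estimate the paper uses later for equicontinuity in Step~1 of the proof of Theorem~\ref{thm:tan-hwg-continuous}.

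Both routes rest on the same action bound, but yours uses the metric structure of $(\mathcal X_\mu,\mathcal W)$ directly. It avoids couplings and Grönwall, and needs no restriction on $\tau$ beyond $\alpha L^2\tau\le\tfrac12$. It also gives the sharper bound $M(\rho^n)\le 2M(\rho^0)+2T\,C_T^{(2)}$, which grows linearly rather than exponentially in $T$.
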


\begin{proof}
  We use the same notations as in the previous proof. Fix $T>0$.

  \emph{Step 1: Energy stability.}
  By the Grönwall inequality of Lemma~\ref{lem:control-freezing}, we have
  \[
    E^{n+1} \le (1+4\alpha L^2\tau) E^n \le e^{4\alpha L^2\tau}E^n,
  \]
  which gives, for all $n\tau\le T$,
  \[
    E^n \le e^{4L^2n\tau}E^0
    \le C_T^{(1)},
    \quad\text{with }C_T^{(1)}:=e^{4\alpha\,L^2 T}E^0\text{ independent of }\tau.
  \]

  \emph{Step 2: Action stability.}
  Summing the pre-telescopic inequality of Lemma~\ref{lem:control-freezing} gives
  \begin{align*}
    E^n + \frac1{4}\sum_{k=0}^{n-1}\frac1{\tau}\mathcal W^2(\rho^{k+1},\rho^k)
    &\le E^0 + \alpha\,L^2\tau\sum_{k=0}^{n-1}(E^{k+1}+E^k)\\
    &\le E^0 + 2\alpha\,L^2n\tau e^{4\alpha\,L^2 n\tau}E^0.
  \end{align*}
  Hence, for all $n\tau\le T$,
  \[
    \sum_{k=0}^{n-1}\frac1{\tau}\mathcal W^2(\rho^{k+1},\rho^k)
    \le C_T^{(2)}
    \quad\text{with }C_T^{(2)}:=4E^0(1 + 2\alpha\,L^2Te^{4\alpha\,L^2 T})\text{ independent of }\tau.
  \]

  \emph{Step 3: Propagation of second moments.}
  Fix $y_0\in Y$ and define
  \[
  M(\rho):=\int_X\int_Y d_Y(y,y_0)^2\,\mathrm d\rho_x(y)\,\mathrm d\mu(x).
  \]
  Fix $n\in\mathbb N$ and $x\in X$.
  Let $\pi_x\in\Pi(\rho^{n+1}_x,\rho^n_x)$ be an optimal coupling between $\rho^{n+1}_x$ and $\rho^n_x$.
  Since
  \begin{align*}
    d_Y(y,y_0)^2 &\le d_Y(y',y_0)^2+d_Y(y,y')^2+2d_Y(y',y_0)d_Y(y,y')\\
      &\le (1+\tau)\,d_Y(y',y_0)^2 + \left(1+\frac1\tau\right)\,d_Y(y,y')^2,\quad\text{(using Young's inequality)}
  \end{align*}
  we obtain
  \[
  \int_{Y\!\times\! Y} \!d_Y(y,y_0)^2\mathrm d\pi_x(y,y')
  \le \left(1\!+\!\tau\right)\!\int_{Y\!\times\! Y} \!d_Y(y',y_0)^2\mathrm d\pi_x(y,y')
     + \left(1\!+\!\frac1\tau\right)\!\int_{Y\!\times\! Y} \!d_Y(y,y')^2\mathrm d\pi_x(y,y').
  \]
  Which, by definition of the marginals of $\pi_x$ and the Wasserstein distance, yields
  \[
  \int_Y d_Y(y,y_0)^2\,\mathrm d\rho^{n+1}_x(y)
  \le \left(1\!+\!\tau\right)\int_Y d_Y(y',y_0)^2\,\mathrm d\rho^n_x(y')
     + \left(1\!+\!\frac1\tau\right)W_2^2(\rho^{n+1}_x,\rho^n_x)
  \]
  Integrating over $X$ gives
  \[
  M(\rho^{n+1})\le \left(1\!+\!\tau\right)M(\rho^n)+\left(1\!+\!\frac1\tau\right)\mathcal W^2(\rho^{n+1},\rho^n).
  \]
  Summing this inequality and using the action stability result of Step~2 yields, for $n\tau\le T$,
  \begin{align*}
    M(\rho^{n})
    &\le M(\rho^0) + \tau\sum_{k=0}^{n-1}M(\rho^k)
    +\left(1\!+\!\frac1\tau\right)\sum_{k=0}^{n-1}\mathcal W^2(\rho^{k+1},\rho^k)\\
    &\le M(\rho^0) + \tau\sum_{k=0}^{n-1}M(\rho^k)
    +(\tau+1)C_T^{(2)}\\
    &\le \left(M(\rho^0)+2C_T^{(2)}\right) + \sum_{k=0}^{n-1}M(\rho^k)\tau,
    \quad\text{for $\tau\le 1$}.
  \end{align*}
  By Grönwall lemma we obtain
  \[
    M(\rho^{n}) \le \left(M(\rho^0)+2C_T^{(2)}\right)e^{n\tau}\le C_T^{(3)}
    \quad\text{with }C_T^{(3)}:=\left(M(\rho^0)+2C_T^{(2)}\right)e^T\text{ independent of }\tau.
  \]

  Setting $C_T=\max\{C_T^{(1)}, C_T^{(2)}, C_T^{(3)}\}$ proves the claim.
\end{proof}

\subsubsection{Existence of subsequential limits}

We now show the existence of subsequential limits to the Tan--HWG scheme when $\tau\to 0$, in the case of Tan--HWG GQE, under the three regularity assumptions of Section~\ref{sec:assumptions}.
The moment stability result of the previous lemma is key to ensure the existence of such limits. Moreover, we show that the limit curve is locally absolutely continuous with respect to the Wasserstein metric and satisfies a \emph{perturbed energy--dissipation inequality}. This provides a rigorous continuous-time formulation of the dynamics.

\begin{theorem}[Continuous dynamics for Tan--HWG GQE]
  \label{thm:tan-hwg-continuous}
  Assume $\Gamma$ is a finite set.
  Let $E$ be a Tan--HWG GQE, and assume Assumptions~\ref{A1} and \ref{A2} are true.

  Let $(\rho_\tau^n)_{n\ge0}$ be the Tan--HWG minimizing-movement scheme with timestep $\tau>0$, associated with $\mu$-compatible contexts $(\phi_\tau^n)_{n\ge0}$, under sleep-mode assumption~\ref{A3}, with fixed initial conditions $\phi_\tau^0=\phi^0\in\mathcal C_\mu$, and $\rho^0_\tau=\rho^0\in\mathcal X_\mu$ independent of $\tau$.

  Let $\rho_\tau$ be the piecewise-constant interpolation $\rho_\tau(t):=\rho^{\lfloor t/\tau \rfloor}_\tau$, and $\phi_\tau$ be the piecewise-constant interpolation $\phi_\tau(t):=\phi^{\lfloor t/\tau \rfloor}_\tau$

  Then, as $\tau\to0$, the family $(\rho_\tau)_\tau$ admits limit curves along subsequences, with associated limit curves of $(\phi_\tau)_\tau$.

  Moreover, for any limit curves $\rho:[0,\infty)\to\mathcal X_\mu$ and $\phi:[0,\infty)\to\mathcal C_\mu$:
  \begin{itemize}
    \item $\rho\in AC_{\mathrm{loc}}([0,\infty);(\mathcal X_\mu,\mathcal W))$ (i.e.\ $\rho$ locally absolutely continuous);
    \item for all $t\ge 0$, we have the \emph{perturbed energy--dissipation inequality}
    \[
    E(\rho(t),\phi(t)) +\frac14\int_0^t |\dot \rho|^2(s)\,\mathrm ds
    \le E(\rho^0,\phi^0)+2\alpha L^2\int_0^t E(\rho(s),\phi(s))\,\mathrm ds;
    \]
    \item in particular, $t\mapsto E(\rho(t),\phi(t))$ is locally absolutely continuous and obeys the Grönwall bound
    \[
    E(\rho(t),\phi(t))\le E(\rho^0,\phi^0)\,e^{2\alpha L^2 t}.
    \]
  \end{itemize}

\end{theorem}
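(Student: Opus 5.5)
The plan follows the standard AGS compactness route for minimizing movements, with Lemma~\ref{lem:stability} supplying the uniform estimates. Fix $T>0$.

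\textbf{Step 1: equicontinuity.} By the action stability \eqref{eq:stab-action} and Cauchy--Schwarz, for $0\le s\le t\le T$ we get
\[
\mathcal W(\rho_\tau(t),\rho_\tau(s))\le \sum_{k=\lfloor s/\tau\rfloor}^{\lfloor t/\tau\rfloor-1}\mathcal W(\rho^{k+1}_\tau,\rho^k_\tau)\le \sqrt{C_T}\,\sqrt{t-s+\tau}.
\]
So the piecewise-constant interpolants are asymptotically $\tfrac12$-Hölder, uniformly in $\tau$. The sleep-mode Assumption~\ref{A3} transfers this to the contexts: $d_C(\phi_\tau(t),\phi_\tau(s))\le L_C\sqrt{C_T}\sqrt{t-s+\tau}$.

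\textbf{Step 2: compactness.} This is the main obstacle. For each fixed $t$ I need relative compactness of $\{\rho_\tau(t)\}_\tau$ in $(\mathcal X_\mu,\mathcal W)$, and of $\{\phi_\tau(t)\}_\tau$ in $\mathcal C_\mu$.

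\emph{Memory fields.} The moment bound \eqref{eq:stab-moment} gives uniform integrability of second moments, hence fiberwise tightness on average. I would work in the weaker topology of narrow convergence of the joint measures $\mu\otimes\rho_x$ on $X\times Y$, i.e.\ Young-measure convergence. There the uniform second-moment bound gives compactness by Prokhorov, and $\mathcal W$ is lower semicontinuous. The refined Ascoli--Arzel\`a theorem of AGS (Prop.~3.3.1 there) then applies: a weak compact topology together with an l.s.c.\ metric and the equicontinuity of Step~1 produce a subsequence and a $\mathcal W$-continuous limit curve with pointwise weak convergence.

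\emph{Contexts.} Since $\Gamma$ is finite, $\mathcal M(\Gamma)\cong\mathbb C^{|\Gamma|}$ and $\mathcal C_\mu$ is an $L^2(X,\mu;\mathbb C^{|\Gamma|})$ space. The bound $d_C(\phi_\tau(t),\phi^0)\le L_C\sqrt{C_T(t+\tau)}$ keeps $\{\phi_\tau(t)\}_\tau$ in a bounded set, which is weakly compact. A diagonal argument over rational $t$ followed by the equicontinuity extends the subsequence to all $t\in[0,T]$, and then to $[0,\infty)$. If strong convergence turns out to be needed later, I would try to extract it from Assumption~\ref{A3}, e.g.\ when $X$ is finite.

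\textbf{Step 3: absolute continuity.} Define the discrete metric derivative $|\dot\rho_\tau|(t):=\mathcal W(\rho^{k+1}_\tau,\rho^k_\tau)/\tau$ on $[k\tau,(k+1)\tau)$. Its $L^2(0,T)$ norm is bounded by \eqref{eq:stab-action}, so it has a weak $L^2$ limit $m$. Lower semicontinuity of $\mathcal W$ gives $\mathcal W(\rho(s),\rho(t))\le\int_s^t m$, hence $\rho\in AC_{\mathrm{loc}}$ with $|\dot\rho|\le m$ and $\int_0^t|\dot\rho|^2\le\liminf_\tau\int_0^t|\dot\rho_\tau|^2$.

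\textbf{Step 4: energy inequality.} Sum the pre-telescopic inequality of Lemma~\ref{lem:control-freezing} from $0$ to $n-1$ with $n=\lfloor t/\tau\rfloor$:
\[
E(\rho_\tau(t),\phi_\tau(t))+\tfrac14\int_0^{n\tau}|\dot\rho_\tau|^2\le E^0+2\alpha L^2\int_0^{t}E(\rho_\tau(s),\phi_\tau(s))\,\mathrm ds+O(\tau).
\]
To pass to the limit:
\begin{itemize}
\item the dissipation term is handled by the lower semicontinuity from Step~3;
\item the right-hand integral is bounded by the uniform energy bound \eqref{eq:stab-energy}; with pointwise convergence of $E$ along the curves, dominated convergence applies;
\item on the left, I need $E(\rho(t),\phi(t))\le\liminf_\tau E(\rho_\tau(t),\phi_\tau(t))$.
\end{itemize}
For that last lower semicontinuity I would use the Lipschitz structure. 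By Assumptions~\ref{A1}--\ref{A2}, $(\rho,\phi)\mapsto d_Z(\mathcal H_x(\rho_x,S_x),S_x)$ is Lipschitz for $\mathcal W\times d_C$, so $E$ is continuous under strong convergence. If only weak convergence is available, I would fall back on the lower semicontinuity of the frozen energy from sequential stability, combined with the freezing-error control of Lemma~\ref{lem:control-freezing}.

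\textbf{Conclusion.} The Grönwall bound follows from the perturbed inequality by dropping the dissipation term and applying Grönwall's lemma. Continuity of $t\mapsto E(\rho(t),\phi(t))$ comes from its Lipschitz dependence on $(\rho(t),\phi(t))$, which are continuous curves. The delicate point throughout remains that Step~2 must deliver a convergence mode strong enough for Step~4's lower semicontinuity.
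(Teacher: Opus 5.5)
Your Steps 1--3 and the summation in Step 4 follow the paper's proof: equicontinuity from \eqref{eq:stab-action}, compactness from \eqref{eq:stab-moment}, absolute continuity via lower semicontinuity of the metric derivative, then summing the pre-telescopic inequality of Lemma~\ref{lem:control-freezing}. Your modulus $\sqrt{C_T}\sqrt{t-s+\tau}$ is in fact more careful than the paper's.

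The gap is the one you flag yourself, and your fallback does not close it. Step~2 gives only weak convergence: Young-measure convergence of $\rho_{\tau_k}(t)$ and weak $L^2$ convergence of $\phi_{\tau_k}(t)$. Step~4 needs two things from it:
\begin{itemize}
\item on the left, $E(\rho(t),\phi(t))\le\liminf_k E(\rho_{\tau_k}(t),\phi_{\tau_k}(t))$;
\item on the right, more seriously, $\limsup_k\int_0^t E(\rho_{\tau_k}(s),\phi_{\tau_k}(s))\,\mathrm ds\le\int_0^t E(\rho(s),\phi(s))\,\mathrm ds$. This is an \emph{upper} semicontinuity, which no weak-compactness argument supplies.
\end{itemize}
Neither holds in general under weak convergence. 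Assumption~\ref{A1} makes $S$ Lipschitz only for the strong metrics $\mathcal W$ and $d_C$, so $S(\rho_{\tau_k}(t),\phi_{\tau_k}(t))$ need not converge to $S(\rho(t),\phi(t))$. Likewise $\mathcal H_x(\cdot,z)$ is only $W_2$-Lipschitz, so $d_Z(\mathcal H_x(\rho_x,z),z)$ can jump either way under narrow convergence, for instance if it depends on the second moment of $\rho_x$. Your fallback fails for three reasons:
\begin{itemize}
\item sequential stability gives lower semicontinuity of a frozen energy only under $\mathcal W$-convergence, not narrow convergence;
\item it holds only for one fixed frozen signal $h$, whereas your frozen signals $S(\rho^n_\tau,\phi^n_\tau)$ change with $\tau$;
\item Lemma~\ref{lem:control-freezing} compares consecutive steps at fixed $\tau$ and says nothing about $\tau\to0$, and none of this touches the right-hand side.
\end{itemize}

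What is missing is strong convergence: $\mathcal W(\rho_{\tau_k}(t),\rho(t))\to0$ and $d_C(\phi_{\tau_k}(t),\phi(t))\to0$ for every $t$. With it, your own remark that $\sqrt E$ is Lipschitz in $(\mathcal W,d_C)$ under Assumptions~\ref{A1}--\ref{A2} gives pointwise convergence of $E$. Dominated convergence via \eqref{eq:stab-energy} then handles the right-hand side, and the inequality follows. This is exactly what the paper asserts in its Step~1: it upgrades narrow convergence to $\mathcal W$-convergence through convergence of second moments and dominated convergence in $x$, and treats the contexts ``by the same arguments''.

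Be aware that this upgrade needs more than \eqref{eq:stab-moment}. A second-moment bound gives neither uniform integrability of $d_Y(\cdot,y_0)^2$ nor, unless $Y$ is proper, tightness; your Prokhorov step has the same issue. Moreover, unless $\mu$ is carried by finitely many atoms, neither Young-measure limits nor bounded sets of $\mathcal C_\mu$ are strongly compact. So you must bring in an extra ingredient, such as finite $X$, proper $Y$, and a uniform-integrability estimate for the discrete solutions, rather than hope to extract it from Step~2.

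Finally, the statement asks for local absolute continuity of $t\mapsto E(\rho(t),\phi(t))$, not mere continuity. Passing Assumption~\ref{A3} to the limit gives $d_C(\phi(s),\phi(t))\le L_C\int_s^t m$, so $\phi$ is absolutely continuous. The Lipschitz bound on $\sqrt E$ then yields absolute continuity of $t\mapsto E(\rho(t),\phi(t))$ on bounded intervals.
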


\begin{proof}
  \emph{Step 1: existence of limit curve by equicontinuity and compactness.}
  Fix $T>0$. For $s,t\in[0,T]$ with $s<t$, let $n=\lfloor s/\tau\rfloor$, and $m=\lfloor t/\tau\rfloor$.
  By the triangle inequality, Cauchy--Schwarz and stability equation \eqref{eq:stab-action} of Lemma~\ref{lem:stability},
  \begin{align*}
    \mathcal W(\rho_\tau(t),\rho_\tau(s))
    =\mathcal W(\rho_\tau^m,\rho_\tau^n)
    &\le \sum_{k=n}^{m-1}\mathcal W(\rho^{k+1}_\tau,\rho^k_\tau)\\
    &\le \left(\sum_{k=n}^{m-1}\tau\right)^{1/2}
         \left(\sum_{k=n}^{m-1}\frac1{\tau}\mathcal W^2(\rho^{k+1}_\tau,\rho^k_\tau)\right)^{1/2}\\
    &\le \left((m-n)\tau\right)^{1/2}
         \left(\sum_{k=n}^{m-1}\frac1{\tau}\mathcal W^2(\rho^{k+1}_\tau,\rho^k_\tau)\right)^{1/2}\\
    &\le \sqrt{2C_T}\,|t-s|^{1/2} \qquad\text{(for $\tau$ small enough)}.
  \end{align*}
  Thus $(\rho_\tau)_{\tau>0}$ is equicontinuous in $(\mathcal X_\mu,\mathcal W)$ on $[0,T]$.

  The uniform second-moment bound \eqref{eq:stab-moment} of Lemma~\ref{lem:stability} and Prokhorov's theorem yield tightness in each fibre, hence relative compactness in the narrow topology on $\mathcal X_\mu$.

  By a standard Arzelà--Ascoli/diagonal argument, there exists a subsequence $\tau_k\downarrow0$ and a limit curve $\rho:[0,T]\to\mathcal X_\mu$ such that
  \[
  \rho_{\tau_k}(t)\rightharpoonup\rho(t)\quad\text{narrowly in }\mathcal X_\mu
  \]
  for all $t\in[0,T]$, and $\mathcal W(\rho_{\tau_k}(t),\rho(t))\to0$.

  Extending this construction to $[0,\infty)$ by a diagonal argument yields the claimed convergence.
  Moreover, the uniform second-moment bound \eqref{eq:stab-moment} implies uniform integrability of $d_Y(\cdot,y_0)^2$;
  hence, along the extracted subsequence, narrow convergence of $(\rho_{\tau_k}(t))_x$ together with convergence of second moments yields $W_2\big((\rho_{\tau_k}(t))_x,\rho_x(t)\big)\to0$ for $\mu$-a.e.\ $x$, and therefore $\mathcal W(\rho_{\tau_k}(t),\rho(t))\to0$ by dominated convergence.

  Moreover, by sleep-mode assumption~\ref{A3},
  \[
    d_C(\phi_\tau(t),\phi_\tau(s))
    \le L_C\sum_{k=n}^{m-1}\mathcal W(\rho^{k+1}_\tau,\rho^k_\tau)
    \le L_C\,\sqrt{2C_T}\,|t-s|^{1/2} \qquad\text{(for $\tau$ small enough)},
  \]
  and $(\phi_\tau)_{\tau>0}$ is also equicontinuous in $(\mathcal C_\mu, d_C)$ on $[0,T]$. We also have
  \[
    \left\|\phi_\tau(t)-\phi^0\right\|^2=d_C^2(\phi_\tau(t),\phi^0)\le L_C^2\,2C_T\,t,\quad t\in[0,T],
  \]
  so that $\left\|\phi_\tau\right\|$ is uniformly bounded on $[0,T]$. Since $\Gamma$ is a finite set, we have uniform tightness of total variations of $(\phi_{\tau_k})_{\tau_k}$ in each fiber.
  Hence, we can apply Prokhorov's theorem extension to the family of complex finite measures $(\phi_{\tau_k})_{\tau_k}$. Following same arguments as above for $(\rho_\tau)_\tau$, applied to $(\phi_{\tau_k})_{\tau_k}$, we can extract a subsequence $\tau_{k'}$ from $\tau_k$ to obtain both limit curves $\rho$ and $\phi$ as claimed.

  \emph{Step 2: absolute continuity and metric derivative.}
  The discrete action bound \eqref{eq:stab-action} implies
  \begin{align*}
    \int_\tau^T \left(\frac{\mathcal W(\rho_\tau(t),\rho_\tau(t-\tau))}{\tau}\right)^2\,\mathrm d t
    \le \sum_{k=0}^{\lfloor T/\tau\rfloor-1}\tau
       \left(\frac{\mathcal W(\rho^{k+1}_\tau,\rho^k_\tau)}{\tau}\right)^2
    \le C_T.
  \end{align*}
  Passing to the limit along $\tau_k\downarrow0$ and using the lower semicontinuity of the metric derivative (see e.g.\ \parencite{ags}), we obtain
  \[
  \int_0^T |\dot \rho|^2(t)\,\mathrm d t \le C_T,
  \]
  where $|\dot \rho|(t)$ is the metric derivative of $\rho$ relative to $\mathcal W$. So $\rho\in AC([0,T];(\mathcal X_\mu,\mathcal W))$ for all $T>0$, hence $\rho\in AC_{\mathrm{loc}}([0,\infty);\mathcal X_\mu)$.

  \emph{Step 3: energy-dissipation inequality.}
  By Lemma~\ref{lem:control-freezing}:
  \begin{align*}
    E(\rho^{n+1}_\tau,\phi^{n+1}_\tau) + \frac1{4\tau}\mathcal W^2(\rho^{n+1}_\tau,\rho^n_\tau)
    &\le E(\rho^n_\tau,\phi^n_\tau) + \alpha L^2\tau (E(\rho^n_\tau,\phi^n_\tau)+(1+4\alpha L^2\tau)E(\rho^n_\tau,\phi^n_\tau))\\
    &\le (1+2\alpha L^2\tau+4\alpha^2 L^4\tau^2)E(\rho^n_\tau,\phi^n_\tau).
  \end{align*}
  Summing over $n$ from $0$ to $\lfloor t/\tau\rfloor-1$
  \[
    E(\rho_\tau(t),\phi_\tau(t))
    + \frac14\sum_{k=0}^{\lfloor t/\tau\rfloor-1}\frac1{\tau}\mathcal W^2(\rho^{k+1}_\tau,\rho^k_\tau)
    \,\le\, E(\rho^0,\phi^0) + 2\alpha L^2(1+2\alpha L^2\tau)\sum_{k=0}^{\lfloor t/\tau\rfloor-1} E(\rho_\tau^k,\phi_\tau^k) \tau.
  \]
  Passing to the limit along $\tau_k\downarrow0$, using the lower semicontinuity of the metric derivative and the convergence $\rho_{\tau_k}(t)\to\rho(t)$, we obtain the following continuous energy-dissipation inequality (perturbed EDI)
  \[
  E(\rho(t),\phi(t))
  + \frac14\int_0^t |\dot \rho|^2(s)\,\mathrm ds
  \le E(\rho^0,\phi^0) + 2\alpha L^2\int_0^t E(\rho(s),\phi(s))\,\mathrm ds,
  \]
  for all $t\ge0$.

  \emph{Step 4: Grönwall bound.}
  By Grönwall's lemma we infer
  \[
  E(\rho(t),\phi(t))\le E(\rho^0,\phi^0)e^{2\alpha L^2 t}
  \]
  for all $t\ge0$.
\end{proof}

\begin{remark}[Structural non-deterministic dynamics]
  Since geodesic convexity of $E$ is not established, the limiting Tan--HWG dynamics \emph{need not be unique}. In the AGS framework, uniqueness of the gradient flow is guaranteed only for geodesically $\lambda$-convex energies with $\lambda\ge 0$. When convexity fails, multiple curves of maximal slope may coexist, even under identical initial conditions.

  This implies that an agent may follow \emph{different limiting trajectories} starting from the same memory state.
  Such structural non-determinism is \emph{not} due to any stochastic component of the model: a genuine Wasserstein gradient flow would be fully deterministic in the absence of external randomness. Here, the multiplicity of limit curves stems from the sequential freezing of the global signal at each Tan–HWG step.
  This phenomenon is consistent with neuroscientific observations: identical stimuli can elicit distinct neural trajectories depending on the internal state of the network \parencite{Arieli1996,Churchland2010,Buonomano2009,Mante2013,Rabinovich2008}. In this sense, the framework naturally captures the intrinsic variability of cortical dynamics.
\end{remark}

\section*{Discussion: Geometry of Memory}

The framework developed in this work proposes a geometric reinterpretation of synaptic plasticity, neural computation, and memory consolidation. By embedding memory states in Wasserstein spaces and by introducing contextualized geometric representation maps, we obtain a unified variational formulation of Hebbian learning that naturally incorporates stochasticity, multi-semanticity, oscillatory structure, and multi-timescale dynamics. This section discusses the conceptual implications of the model, its relationship to biological and artificial neural systems, and its limitations.

\paragraph{Geometry as a flexible and unifying cognitive language.}
A central contribution of this work is the shift from fixed Euclidean embeddings to \emph{geometric memories} represented as probability measures. This representation generalises existing models: classical neural networks appear as degenerate cases in which internal distributions collapse to Dirac masses. The formulation allows synapses to encode not only scalar weights but full distributions, capturing uncertainty, multimodality, and latent semantic structure. Observable synaptic weights arise as projections of these internal states, and their geometry is governed by Wasserstein transport. This perspective provides a \emph{principled explanation for several phenomena typically introduced heuristically} in machine learning, such as attention mechanisms, pruning, sparsity, synchronization and representational drift.

\paragraph{A duality between rich internal dynamics and flat observable projections.}
The introduction of \emph{mapped energies} generalizes the idea that complex architectures can be decomposed into elementary geometric maps whose regularity properties determine the global behavior of the system. This decomposition clarifies the role of activation functions, which no longer serve as the primary source of non-linearity but are instead practical building blocks for \emph{geometrically compatible projections} of a fundamentally curved latent space.

Hence, the framework offers a \emph{principled geometric account of cognitive computation}, suggesting that cognition may be more naturally described in geometric terms (distances, curvatures, geodesics) rather than algebraic ones (vectors, scalars, matrices). In particular, algebraic descriptions of neural computation may represent flattened projections of richer geometric dynamics.

\paragraph{The quadratic--affine regime as an anchor point.}
The quadratic--affine regime plays the role of an anchor point where the geometric and classical views coincide. When the internal energy is quadratic, the dynamics projects to geodesics, and when the projection map is affine, internal geodesics project to affine observable dynamics. This regime recovers classical update rules such as \emph{exponential moving averages}, \emph{mirror descent}, and state-independent contraction factors, and admits a clean continuous-time limit. As such, it provides a \emph{consistency check for the framework}: in the geometrically ideal case, the model reduces to well-known computational schemes. Deviations from affine behavior can then be interpreted as signatures of underlying geometric curvature. Classical indicators of nonlinearity (e.g.\ discrete second differences, orthogonal update components, or angles between successive gradients) acquire a geometric interpretation as measures of deviation from geodesic flatness.

\paragraph{Mathematical constraints as organisational principles.}
A key insight of the framework is the \emph{rigidity of geodesic affinity} in Wasserstein spaces. Contextualized maps are rarely geodesically affine, which prevents the associated energies from being Tan--HWG. This geometric constraint naturally supports a \emph{multi-timescale coherence mechanism}: fast synaptic dynamics operate under fixed structural weights, while slow structural plasticity requires coarse-grained or quasi-stationary activations. This provides a mathematical explanation for the long-standing observation that memory consolidation occurs preferentially during sleep or quiet wakefulness, when external perturbations are minimal and neural activity is dominated by internally generated patterns.

Moreover, non-uniqueness of non-convex dynamics (discrete or continuous) supports structural non-determinism, again in coherence with neuroscientific observations. Hence, geometry acts not merely as a descriptive tool, but as a generative principle shaping the structure of learning dynamics.

\paragraph{Limitations and computational challenges.}
Several aspects of the framework remain interpretative and require empirical validation. The biological relevance of multi-semantic embedding, robustness, and stability in generalized neural networks remains to be demonstrated in practice. Computationally, Wasserstein updates are expensive, and fast closed-form solutions exist only in restricted cases. While entropic regularisation and Sinkhorn-type methods~\parencite{cuturi2013sinkhorn} accelerate the computation of optimal transport, the scalability of Tan--HWG dynamics for large-scale architectures remains an open question. Moreover, nothing guarantees that generalized networks constructed within this framework will match the performance of modern artificial neural networks, which currently excel across a wide range of tasks.

Future work may explore:
\begin{itemize}
  \item \emph{Dynamics:} explicit gradient-flow equations for the full synaptic distribution and a systematic classification of equilibria or convergence rates; the study of attractors, bifurcations, cycles, stability regimes, and long-term behavior of Tan--HWG flows may reveal new forms of memory organisation;
  \item \emph{Representations:} the impact of internal-space topology (e.g.\ different CAT(0) structures or metric trees, including forbidden connections, directed edges, or structured sparsity) on learning dynamics;
  \item \emph{Structures:} pruning and self-organised sparsity-natural consequences of the simplex constraint—suggest promising directions for performance optimisation;
  \item \emph{Cognition:} connection between geometric dynamics and cognitive phenomena, such as consolidation, abstraction, or compositionality;
  \item \emph{Applications:} continual learning, meta-learning, or neuromorphic computing.
\end{itemize}

\section*{Conclusion}

The Tan--HWG framework reframes Hebbian plasticity as a geometric process rather than an algebraic update of synaptic coefficients. By lifting memory states to Wasserstein spaces and expressing plasticity as a minimizing movement, learning appears as the evolution of probability measures constrained by curvature, geodesic rigidity, and the regularity of contextualized representation maps. This perspective reveals that many qualitative features of learning---stability, competition, consolidation, synchronization, sparsity---arise not from additional mechanisms but from the geometric structure in which memory evolves.

This shift challenges the traditional separation between ``weights'' and ``computation''. In the Tan--HWG view, observable weights are projections of a richer internal geometry, and classical update rules emerge as flat limits of curved dynamics. The rigidity of geodesic affinity, the emergence of multi-timescale coherence, and the structural non-determinism induced by frozen signals highlight a deeper principle: learning systems are shaped less by the rules they implement than by the geometry that makes these rules possible.

The framework also brings to light a structural tension inherent to synaptic plasticity. Internal dynamics must remain geometrically coherent, while observable behavior must remain simple, stable, and low-dimensional. The coexistence of curved internal trajectories with affine observable recursions is not an artefact of the model but a consequence of compressing rich internal representations into actionable outputs. This duality provides a geometric explanation for the stability of observable behavior despite the complexity of internal dynamics.

Under quasi-stationary contexts, the existence of continuous-time limit curves shows that consolidation corresponds to the regime in which the geometry can unfold without external disruption. In this sense, \emph{memory is the trace left by a trajectory in a curved space}: its organisation reflects both the information acquired and the geometric constraints under which learning takes place.

Rather than closing the theory, the framework delineates a landscape in which learning, representation, and consolidation are unified by a single geometric principle. It provides a foundation for analysing how internal geometries shape the dynamics of plasticity and how observable behavior emerges from the curvature of latent spaces.

%------------------------------------------------------------------------------

\clearpage
\appendix

\section{Self consistency and stability of fixed points in Tan-HWG dynamics}
\label{sec:appendix_fixed_points}

This appendix gathers the arguments underlying the fixed-point structure and stability properties of Tan--HWG dynamics. It is self-contained and independent of the examples presented in the main text.

\subsection{Fixed Points and Self-Consistency}\label{sec:fixed_points}

A distinctive feature of Tan--HWG dynamics is that the global signal driving the evolution of the local states is itself induced by the current configuration of the memory field. As a consequence, equilibrium configurations are characterized by a self-consistency condition linking the local states and the global signal they collectively generate. We formalize this notion below and relate fixed points of the dynamics to Wasserstein critical points of the energy.

\begin{proposition}[Fixed points as critical points]\label{prop:fixed_points}
  Let $E$ be a Tan--HWG energy, and fix a stationary context $\phi \in \mathcal M(\Gamma)^X$. Let $\tau>0$, $T^E_{\phi, \tau}$ denote the $\tau$-Tan--HWG update operator. Let $(\rho^n)$ be the associated dynamics. Then $\rho^*$ is a fixed point of the dynamics if and only if it is a Wasserstein critical point of $E(\cdot,\phi)$:
  \[
  \rho^* \in T_{\phi, \tau}(\rho^*) \text{ (fixed point)}
  \quad\Longleftrightarrow\quad
  0 \in\partial_{\mathcal W} E(\rho^*,\phi),
  \]
  where $\partial_{\mathcal W}$ denotes the Wasserstein subdifferential.
\end{proposition}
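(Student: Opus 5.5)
The plan is to work throughout with the frozen energy at the candidate point. I set $h^*:=S(\rho^*,\phi)$ and $G(\rho):=E_{\phi,\rho^*}(\rho)=\int_X\mathcal E_x(\rho_x,h^*_x)\,\mathrm d\mu(x)$. Following the Remark on the first variation, I read the Wasserstein subdifferential $\partial_{\mathcal W}E(\rho^*,\phi)$ as the subdifferential of this frozen energy, i.e.\ $\partial_{\mathcal W}G(\rho^*)$. The key observation is that $G(\rho^*)=E(\rho^*,\phi)$. Moreover, $\rho^*\in T^E_{\phi,\tau}(\rho^*)$ means exactly that $\rho^*$ minimizes
\[
\mathcal F(\rho):=G(\rho)+\frac1{2\tau}\mathcal W^2(\rho,\rho^*).
\]
The equivalence therefore becomes a statement about one fixed functional $G$, which is proper, lower semicontinuous and $\lambda$-geodesically convex on $(\mathcal X_\mu,\mathcal W)$ by sequential stability (Definition~\ref{def:seq-stab}).

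\emph{($\Rightarrow$).} Suppose $\rho^*$ minimizes $\mathcal F$. Then for every $\rho\in\mathcal X_\mu$,
\[
G(\rho)-G(\rho^*)\ge -\frac1{2\tau}\mathcal W^2(\rho,\rho^*)=o\big(\mathcal W(\rho,\rho^*)\big)\quad\text{as }\rho\to\rho^*.
\]
This is precisely the Fréchet (metric) subdifferential condition $0\in\partial_{\mathcal W}G(\rho^*)$ in the sense of AGS. Equivalently, the local slope $|\partial G|(\rho^*)$ vanishes. I will state it in whichever form the AGS framework uses. The fiberwise decomposition of Step~2 in Theorem~\ref{thm:tan-hwg} can be invoked to phrase this as $0\in\partial_1\mathcal E_x(\rho^*_x,h^*_x)$ for $\mu$-a.e.\ $x$, but this is not needed.

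\emph{($\Leftarrow$).} Suppose $0\in\partial_{\mathcal W}G(\rho^*)$. For a $\lambda$-geodesically convex functional, the AGS characterization of the subdifferential upgrades the infinitesimal condition to the global inequality
\[
G(\rho)\ge G(\rho^*)+\frac\lambda2\mathcal W^2(\rho,\rho^*)\qquad\text{for all }\rho\in\mathcal X_\mu .
\]
The proof is to apply convexity along the (fiberwise, by the product structure) geodesic from $\rho^*$ to $\rho$, divide by $t$, and let $t\downarrow0$. Using Proposition~\ref{prop:mu-compatible-geodesic}, this geodesic stays in $\mathcal X_\mu$. Adding the proximal term then gives
\[
\mathcal F(\rho)\ge \mathcal F(\rho^*)+\frac{1+\lambda\tau}{2\tau}\mathcal W^2(\rho,\rho^*)\ge\mathcal F(\rho^*),
\]
so $\rho^*\in T^E_{\phi,\tau}(\rho^*)$.

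\emph{Main obstacle.} The main obstacle is this last step when $\lambda<0$: the argument requires $1+\lambda\tau\ge0$. For $\lambda\ge0$ it holds for every $\tau$. Otherwise I expect to add the standard AGS restriction $\tau<1/|\lambda|$, which also makes the minimizer unique. I would make this restriction explicit in the write-up rather than claim the equivalence for all $\tau$.

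A secondary subtlety is that the signal depends on $\rho$. The notion of critical point must therefore be the frozen one at $\rho^*$, not the subdifferential of the full map $\rho\mapsto E(\rho,\phi)$. I will state this identification explicitly as the meaning of $\partial_{\mathcal W}E(\rho^*,\phi)$, consistent with the Remark on the first variation.
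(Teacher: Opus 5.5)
The paper does not prove this proposition: it is only stated in the appendix, followed by the assertion that the set of fixed points is independent of $\tau>0$. Your argument is correct and is the natural proof. Both of your caveats are forced, not artefacts of the method.

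Direction ($\Rightarrow$) holds for every $\tau>0$, exactly as you write. Direction ($\Leftarrow$) can genuinely fail when $1+\lambda\tau<0$, so the paper's $\tau$-independence claim is false for $\lambda<0$. Take $X$ a single point, $Y=\mathbb R$, a constant signal, and $\mathcal E(\eta)=\int_{\mathbb R}\cos y\,\mathrm d\eta(y)$. This energy is proper, $W_2$-continuous and $(-1)$-geodesically convex (a potential energy with $V''\ge-1$), hence a Tan--HWG energy. The Dirac $\delta_0$ is critical, since $\mathcal E(\eta)-\mathcal E(\delta_0)=\int(\cos y-1)\,\mathrm d\eta(y)\ge-\tfrac12W_2^2(\eta,\delta_0)$. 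The JKO functional at $\delta_0$ is $\eta\mapsto\int(\cos y+\tfrac{y^2}{2\tau})\,\mathrm d\eta(y)$. For $\tau>1$ its integrand has a strict local maximum at $y=0$, so $\delta_0\notin T^E_{\phi,\tau}(\delta_0)$. For $\tau\le1$, the bound $\cos y\ge1-\tfrac{y^2}{2}$ makes $\delta_0$ a fixed point. This matches your threshold $\tau\le1/|\lambda|$ exactly.

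Your frozen reading of $\partial_{\mathcal W}E(\rho^*,\phi)$ is equally essential, and it agrees with the paper's Remark on the first variation. Take $X$ a point, $Y=Z=\mathbb R$, $S(\rho,\phi)=\int y\,\mathrm d\rho(y)$ and $\mathcal E(\eta,z)=\int(\tfrac{y^2}{2}-zy)\,\mathrm d\eta(y)$. Every $\delta_a$ is then a fixed point. Yet $E(\delta_{a+s},\phi)=-\tfrac12(a+s)^2$, so for $a\neq0$ the Dirac $\delta_a$ is not critical for the unfrozen map $\rho\mapsto E(\rho,\phi)$.

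Two small refinements for the write-up:
\begin{enumerate}
\item The $\lambda$ in your threshold is the convexity modulus of the frozen energy at $\rho^*$, since sequential stability allows it to depend on the reference configuration.
\item When $1+\lambda\tau>0$, your displayed inequality already gives $T^E_{\phi,\tau}(\rho^*)=\{\rho^*\}$ up to $\mu$-null modifications. You therefore need not invoke a general AGS uniqueness result, which in Wasserstein spaces would require convexity along generalized geodesics rather than along geodesics.
\end{enumerate}
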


Remarkably, the characterization of fixed points is independent of the time step $\tau>0$: the parameter $\tau$ affects the transient dynamics but not the location of equilibrium configurations, which are solely determined by the critical points of the energy. In this sense, $\tau$ plays a role analogous to a learning rate in machine learning.

This result shows that equilibrium configurations of the Tan--HWG dynamics are precisely the self-consistent states where the Wasserstein subdifferential vanishes.

\subsection{Stability of fixed points}

We now consider the Brenier setting: quadratic cost on $\mathbb R^d$ with absolutely continuous measures. In this case, the Wasserstein subdifferential reduces to a singleton, and the condition $0\in\partial_{\mathcal W}E(\rho,\phi)$ is equivalent to $\nabla_{\mathcal W}E(\rho,\phi)=0$.

Linearising the dynamics around a fixed point $\rho^*$ in the Otto calculus yields
\[
\partial_t \delta\rho(t)
= - H_E(\rho^*,\phi)\,\delta\rho(t),
\]
where $H_E(\rho^\star,\phi)$ denotes the Wasserstein Hessian of $E(\cdot,\phi)$ at $\rho^\star$. The solution is
\[
\delta\rho(t)
= \exp\!\bigl(-t\,H_E(\rho^*,\phi)\bigr)\,\delta\rho(0).
\]
This yields the following lemma:

\begin{lemma}\label{lem:DT}
  The linearisation of $T^E_{\phi, \tau}$ at a fixed point $\rho^*$ is given by
  \[
  DT^E_{\phi, \tau}(\rho^*) = \exp\bigl(-\tau H_E(\rho^*,\phi)\bigr).
  \]
\end{lemma}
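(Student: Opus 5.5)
The plan is to read $T^E_{\phi,\tau}$, in the Brenier setting fixed just above, as the time-$\tau$ evolution map of the Wasserstein dynamics driven by $E(\cdot,\phi)$ with the stationary context $\phi$. Its derivative at $\rho^*$ is then the time-$\tau$ solution operator of the variational (tangent) equation displayed before the lemma. Concretely, write $\Theta_t(\rho)$ for the state at time $t$ started from $\rho$, and identify $T^E_{\phi,\tau}=\Theta_\tau$ near $\rho^*$. Proposition~\ref{prop:fixed_points} gives $0=\nabla_{\mathcal W}E(\rho^*,\phi)$, which is a singleton in the Brenier case, so $\rho^*$ is a stationary point: $\Theta_t(\rho^*)=\rho^*$ for all $t$. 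In particular $T^E_{\phi,\tau}(\rho^*)=\rho^*$, so the derivative $DT^E_{\phi,\tau}(\rho^*)$ acts on the tangent space $T_{\rho^*}\mathcal P_2$, i.e.\ on perturbations $\delta\rho$ with zero mass written as $-\nabla\cdot(\rho^*\nabla\varphi)$.

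\emph{Step 1 (tangent equation).} Take a one-parameter family of initial data $\rho_\varepsilon(0)=\rho^*+\varepsilon\,\delta\rho(0)+o(\varepsilon)$ and set $\delta\rho(t):=\partial_\varepsilon\Theta_t(\rho_\varepsilon(0))|_{\varepsilon=0}$. Differentiating the gradient-flow equation $\partial_t\rho=\nabla\cdot(\rho\nabla\frac{\delta E}{\delta\rho})$ in $\varepsilon$ at the stationary point, the term involving $\nabla\frac{\delta E}{\delta\rho}(\rho^*)$ drops out. What remains is the linearised equation $\partial_t\delta\rho=-H_E(\rho^*,\phi)\,\delta\rho$, where $H_E$ is the Otto Hessian, a self-adjoint operator on the tangent space for the Otto metric.

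\emph{Step 2 (solve and evaluate).} $H_E(\rho^*,\phi)$ is a fixed linear operator independent of $t$, since $\rho^*$ and $\phi$ are stationary. The tangent equation is therefore autonomous and linear, and its solution operator is the semigroup $\delta\rho(t)=\exp(-tH_E(\rho^*,\phi))\delta\rho(0)$. This is the formula already displayed before the lemma; it is well defined by the spectral theorem because $H_E$ is self-adjoint and bounded below, using the $\lambda$-convexity coming from sequential stability. Combining this with the chain rule $D\Theta_\tau(\rho^*)\delta\rho(0)=\delta\rho(\tau)$ and evaluating at $t=\tau$ gives $DT^E_{\phi,\tau}(\rho^*)=\exp(-\tau H_E(\rho^*,\phi))$.

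\emph{Main obstacle.} The delicate point is the identification $T^E_{\phi,\tau}=\Theta_\tau$. The JKO minimiser is a priori only the implicit-Euler step, whose own derivative would be a resolvent. I would handle this by working, as the paragraph before the lemma does, entirely at the level of the linearised Otto dynamics around $\rho^*$, and by interpreting the update operator through the flow it generates in that linearisation; the frozen signal is constant at a stationary $(\rho^*,\phi)$, so no freezing error enters. I would state explicitly that this is the sense in which the lemma holds. The remaining regularity needed for the rigorous Otto-calculus differentiation (smooth positive density for $\rho^*$, $E$ twice differentiable) would be assumed as part of the Brenier setting.
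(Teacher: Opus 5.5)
Your argument follows the paper's own derivation: linearise the Otto flow at $\rho^*$, solve $\partial_t\delta\rho=-H_E\,\delta\rho$, and evaluate at $t=\tau$. There is nonetheless a genuine gap, and it sits exactly where you placed your ``main obstacle''. $T^E_{\phi,\tau}$ is the proximal (implicit-Euler) map, not the time-$\tau$ flow map, and for the proximal map the stated formula is false. Re-interpreting the update operator ``through the flow it generates'' therefore proves a different statement, not the lemma. The paper's derivation makes the same identification silently, so agreeing with it does not close the gap.

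Even when the signal does not depend on $\rho$, linearising the Euler--Lagrange equation of the proximal step at a critical point gives the resolvent $(\mathrm{Id}+\tau H_E)^{-1}$, not the semigroup. Theorem~\ref{thm:closed_form} makes this explicit. Take a constant signal $S\equiv z$ with $z$ absolutely continuous on $\mathbb R^d$. Then $E(\cdot,\phi)=\frac{\alpha}{2}\mathcal W^2(\cdot,z)$, $\rho^*=z$, and $H_E(\rho^*,\phi)=\alpha\,\mathrm{Id}$ on the tangent space at $z$. For $\rho=(\mathrm{Id}+\varepsilon\nabla\varphi)_\#z$ the update is exactly $(\mathrm{Id}+(1-t_\tau)\varepsilon\nabla\varphi)_\#z$. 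Hence
\[
DT^E_{\phi,\tau}(\rho^*)=(1-t_\tau)\,\mathrm{Id}=\frac{1}{1+\alpha\tau}\,\mathrm{Id}\neq e^{-\alpha\tau}\,\mathrm{Id}=\exp\bigl(-\tau H_E(\rho^*,\phi)\bigr)
\]
for every $\tau>0$. The two agree only to first order in $\tau$.

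Independently, your claim that ``no freezing error enters'' is wrong at first order. It is true that the signal is fixed at $(\rho^*,\phi)$, but $DT^E_{\phi,\tau}(\rho^*)$ is computed by perturbing the \emph{input} $\rho$, and this moves the frozen signal $S(\rho,\phi)$ by $O(\varepsilon)$. Linearising the frozen step gives $DT=(\mathrm{Id}+\tau A)^{-1}(\mathrm{Id}-\tau B)$. Here $A$ is the Hessian of the frozen energy in its argument, and $B$ is the mixed term through $D_\rho S(\rho^*,\phi)$.

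Already the scalar analogue shows this. Take $F(r,s)=\frac{\alpha}{2}(r-s)^2$ and $S(r)=cr+d$. The frozen step is $r\mapsto\frac{r+\alpha\tau(cr+d)}{1+\alpha\tau}$, so $DT=\frac{1+c\alpha\tau}{1+\alpha\tau}$. For $c\notin\{0,1\}$ and generic $\tau$, this is neither $e^{-\tau h}$ nor $(1+\tau h)^{-1}$. That holds whether $h=\alpha(1-c)^2$ (the Hessian of $r\mapsto F(r,S(r))$) or $h=\alpha(1-c)$ (the Jacobian of the frozen gradient field $r\mapsto\partial_1F(r,S(r))$).

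For the same reason your spectral-theorem step is unjustified. Sequential stability bounds only $A$ from below, and the relevant generator $A+B$ need not be self-adjoint.

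What your argument does establish, formally, is the semigroup formula for the linearised continuous flow. For the update operator itself with a $\rho$-independent signal, the correct statement is $DT^E_{\phi,\tau}(\rho^*)=(\mathrm{Id}+\tau H_E(\rho^*,\phi))^{-1}$. This still yields the same stable/neutral/unstable classification for eigenvalues $\mu_i$ of $H_E$, provided $1+\tau\mu_i>0$.
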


\begin{proposition}[Spectral stability]
  Let $E$ be a Tan--HWG energy, $\phi$ a fixed context, and $\rho^*$ a fixed point. Let $(\mu_i)_i$ be the eigenvalues of the Wasserstein Hessian $H_E(\rho^*,\phi)$ and $(\lambda_i)_i$ those of the Jacobian of the time--1 map of the gradient flow (or of the 1-Tan--HWG dynamics) at $\rho^*$. Then
  \[
  \lambda_i = e^{-\mu_i} \quad \text{for all } i.
  \]
  In particular:
  \[
  \mu_i > 0 \Rightarrow |\lambda_i| < 1,\quad
  \mu_i = 0 \Rightarrow |\lambda_i| = 1,\quad
  \mu_i < 0 \Rightarrow |\lambda_i| > 1.
  \]
\end{proposition}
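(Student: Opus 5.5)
The plan is to derive the spectral statement directly from Lemma~\ref{lem:DT}, applied with $\tau=1$, and then use the spectral mapping theorem for the exponential of a self-adjoint operator.

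\textbf{Setting.} We are in the Brenier setting: quadratic cost on $\mathbb R^d$ and absolutely continuous measures. Otto calculus then equips the tangent space at $\rho^*$ with the weighted $L^2(\rho^*)$ inner product on gradient fields. In this Riemannian-like structure the Wasserstein Hessian $H_E(\rho^*,\phi)$ is a symmetric, hence self-adjoint, operator. Its eigenvalues $\mu_i$ are real and admit an orthonormal family of eigenvectors $v_i$ in the tangent space.

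\textbf{Step 1: the linearised map.} Lemma~\ref{lem:DT} with $\tau=1$ identifies the Jacobian of the time-$1$ map at $\rho^*$ as
\[
DT^E_{\phi,1}(\rho^*)=\exp\bigl(-H_E(\rho^*,\phi)\bigr).
\]
This holds both for the 1-Tan--HWG step and for the time-$1$ map of the gradient flow, since Lemma~\ref{lem:DT} was obtained from the linearised flow $\partial_t\delta\rho=-H_E\,\delta\rho$.

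\textbf{Step 2: spectral mapping.} For each eigenpair $(\mu_i,v_i)$ the power series of the exponential gives
\[
\exp(-H_E)v_i=\sum_k \frac{(-\mu_i)^k}{k!}v_i=e^{-\mu_i}v_i.
\]
So each $e^{-\mu_i}$ is an eigenvalue of the Jacobian. Conversely, since $H_E$ is diagonalised by the $v_i$, so is $\exp(-H_E)$, and these exhaust its spectrum. This yields $\lambda_i=e^{-\mu_i}$ after matching indices.

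\textbf{Step 3: trichotomy.} Because $\mu_i\in\mathbb R$, the eigenvalue $\lambda_i$ is positive with $|\lambda_i|=e^{-\mu_i}$. The three cases then follow from the strict monotonicity of $\mu\mapsto e^{-\mu}$ and the value $e^0=1$.

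\textbf{Main obstacle.} The delicate point is Step~2 in infinite dimensions. $H_E$ may be unbounded, so "eigenvalues" should be read as point spectrum. I would handle this with the functional calculus for self-adjoint operators: $\sigma(e^{-H})=\overline{e^{-\sigma(H)}}$, and eigenvectors are preserved.

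A second subtlety concerns the exactness of Lemma~\ref{lem:DT} for the discrete JKO step. At linear order the JKO step is the implicit Euler scheme, whose linearisation is $(I+\tau H_E)^{-1}$ rather than $e^{-\tau H_E}$. These two operators agree up to $O(\tau^2)$, and both give the same sign trichotomy, since $(1+\mu)^{-1}<1$ if and only if $\mu>0$ (for $1+\mu>0$). I would nevertheless rely on Lemma~\ref{lem:DT} as stated earlier and remark on this point.
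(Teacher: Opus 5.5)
Your argument is the paper's own: the paper gives no separate proof and reads the proposition off Lemma~\ref{lem:DT} at $\tau=1$, i.e.\ off $DT^E_{\phi,1}(\rho^*)=\exp(-H_E(\rho^*,\phi))$ combined with the spectral mapping of the exponential, exactly as in your Steps 1--3.

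Your second caveat is correct and sharper than the paper, so act on it instead of only remarking it. Lemma~\ref{lem:DT} is derived from the linearised flow, so it describes the time-$\tau$ flow map, whereas the minimising-movement step, for a signal independent of $\rho$, linearises to the resolvent $(I+\tau H_E)^{-1}$; a $\rho$-dependent frozen signal $S(\rho,\phi)$ adds a further first-order term that neither the lemma nor your proposal tracks. So assert $\lambda_i=e^{-\mu_i}$ only for the gradient-flow time-$1$ map, drop the ``holds for both'' claim of your Step~1, and for the 1-Tan--HWG step keep only the trichotomy. There, $1+\mu_i\ge 0$ comes for free, because a fixed point minimises its own step functional, whose second variation at $\rho^*$ is $I+H_E$.
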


The choice $\tau=1$ is made for simplicity, since the stability properties are independent of the time step.
This gives a simple classification of the dynamics:
\begin{itemize}
  \item $\mu_i > 0$ $\iff$ $|\lambda_i|<1$: stable directions (local attractor);
  \item $\mu_i = 0$ $\iff$ $|\lambda_i|=1$: neutral directions (flat manifolds or symmetries);
  \item $\mu_i < 0$ $\iff$ $|\lambda_i|>1$: unstable directions (saddles, bifurcations).
\end{itemize}

\begin{corollary}[Geometric consolidation in stationary context]
  Let $E$ be a Tan--HWG energy and $\phi \in \mathcal M(\Gamma)^X$. If $\rho^*$ is a fixed point such that the Wasserstein Hessian $H_E(\rho^*,\phi)$ is strictly positive definite, then $\rho^*$ is a locally asymptotically stable fixed point of the Tan--HWG dynamics. In particular, for all initial conditions $\rho^0$ sufficiently close to $\rho^*$, the iterates
  \[
  \rho^{n+1} \in T^E_{\phi, 1}(\rho^n)
  \]
  converge to $\rho^*$ as $n\to\infty$.
\end{corollary}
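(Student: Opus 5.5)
The plan is to deduce the corollary from Lemma~\ref{lem:DT} and the spectral stability proposition, using a discrete-time linearisation argument of Perron/Hartman type in a weighted tangent space at $\rho^*$. The work is done at time step $\tau=1$, where the linearised update is $DT^E_{\phi,1}(\rho^*)=\exp(-H_E(\rho^*,\phi))$.

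\emph{Step 1: spectral radius.} I would work in the tangent space $T_{\rho^*}\mathcal X_\mu$, taken as the closure of gradients $\{\nabla\varphi\}$ in $\mathrm L^2(\rho^*_x)$, integrated over $\mu$. There the Wasserstein Hessian $H_E(\rho^*,\phi)$ is a self-adjoint operator. I would read ``strictly positive definite'' as coercive: $H_E\ge m\,\mathrm{Id}$ for some $m>0$. Without coercivity the infinite-dimensional argument fails, so this interpretation is part of the plan. By the spectral stability proposition, every eigenvalue satisfies $\lambda_i=e^{-\mu_i}\le e^{-m}<1$. By the spectral theorem, the operator norm of the linearisation is then at most $e^{-m}<1$.

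\emph{Step 2: local contraction.} I would write $T^E_{\phi,1}(\rho)=\exp_{\rho^*}\big(e^{-H_E}\,v+R(v)\big)$, where $\rho=\exp_{\rho^*}(v)$ and $R(v)=o(\|v\|)$; the differentiability of the update at the fixed point is exactly what Lemma~\ref{lem:DT} provides. I would choose $\varepsilon>0$ with $e^{-m}+\varepsilon<1$ and a radius $r$ such that $\|R(v)\|\le\varepsilon\|v\|$ whenever $\|v\|\le r$. Then
\[
\mathcal W\big(T^E_{\phi,1}(\rho),\rho^*\big)\le (e^{-m}+\varepsilon)\,\mathcal W(\rho,\rho^*)+o\big(\mathcal W(\rho,\rho^*)\big)
\]
on the ball $B(\rho^*,r)$, after comparing the tangent norm $\|v\|$ with $\mathcal W$ near $\rho^*$. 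The ball is therefore invariant under the update.

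\emph{Step 3: induction.} Iterating gives $\mathcal W(\rho^n,\rho^*)\le q^n\,\mathcal W(\rho^0,\rho^*)\to0$ with $q<1$. This holds for every selection $\rho^{n+1}\in T^E_{\phi,1}(\rho^n)$ as long as all selections satisfy the same estimate, and it yields both Lyapunov stability and attraction.

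\emph{Main obstacle.} The hard step is Step 2: turning the formal Otto linearisation into a genuine remainder estimate that holds uniformly over the possibly multivalued JKO update, in a metric space that is not a Hilbert manifold. My first attempt would bypass the linearisation. Strict positivity of $H_E$ should give local $m$-geodesic convexity of $E(\cdot,\phi)$ near $\rho^*$. Then the standard AGS contraction estimate for $\lambda$-convex minimizing movements, $\mathcal W(\rho^{n+1},\rho^*)\le(1+m)^{-1}\mathcal W(\rho^n,\rho^*)$, applied with the fixed point $\rho^*$ as comparison point, gives the result directly. The comparison needs $\rho^*$ to be a minimizer of its own JKO step, which holds by Proposition~\ref{prop:fixed_points}. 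This route only requires checking that the iterates stay in the convexity neighbourhood, which follows inductively from the contraction itself.
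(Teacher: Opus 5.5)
Your Steps 1--3 are the paper's own argument made explicit. The paper gives no separate proof; it reads the corollary off Lemma~\ref{lem:DT} and the spectral-stability classification ($\mu_i>0\Rightarrow|\lambda_i|=e^{-\mu_i}<1$). That argument, and therefore your plan, fails before the remainder estimate you single out. Lemma~\ref{lem:DT} linearises the gradient flow of the fixed functional $E(\cdot,\phi)$. But $T^E_{\phi,1}(\rho^n)$ minimises the frozen energy $E_{\phi,\rho^n}$, whose signal $S(\rho^n,\phi)$ moves with $\rho^n$. Formally linearising the optimality condition of the step at $\rho^*$ gives $\delta\rho^{n+1}=(I+H_{\mathrm{fr}})^{-1}(I-B)\,\delta\rho^n$. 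Here $H_{\mathrm{fr}}$ is the Hessian of $E_{\phi,\rho^*}$, and $B$ is the derivative of $\rho^{\mathrm{ref}}\mapsto\nabla_{\mathcal W}E_{\phi,\rho^{\mathrm{ref}}}(\rho^*)$. (Even with $B=0$ this is the implicit-Euler factor $(I+H)^{-1}$ rather than $e^{-H}$, which alone would be harmless.) Positivity of $H_E$ does not make this map a contraction.

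Here is a concrete counterexample. Take $X=\{x\}$, $Y=\mathbb R$, $\rho^*=\mathcal N(0,1)$, $\mathcal E(\eta,z)=\frac\alpha2W_2^2(\eta,z)$ and $S(\rho,\phi)=(y\mapsto y+2\bar y_\rho)_\#\rho^*$, where $\bar y_\rho=\int y\,\mathrm d\rho(y)$. This is a purely quadratic Tan--HWG energy with a $2$-Lipschitz signal. The identity $W_2^2(\rho,(y\mapsto y+c)_\#\nu)=W_2^2(\rho,\nu)-2c(\bar y_\rho-\bar y_\nu)+c^2$ gives $E(\rho,\phi)=\frac\alpha2W_2^2(\rho,\rho^*)$. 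This energy is globally $\alpha$-convex with $H_E(\rho^*,\phi)=\alpha\,\mathrm{Id}$ (as is the frozen Hessian), and $\rho^*$ is a fixed point. Yet starting from $\rho^*$ translated by $\varepsilon$, the unique iterates are $\rho^*$ translated by $\big(\frac{1+2\alpha}{1+\alpha}\big)^n\varepsilon$, so $\mathcal W(\rho^n,\rho^*)\to\infty$. The corollary is false as stated, and no remainder estimate can rescue Step~2.

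Your alternative route breaks at the same place, not at ``staying in the convexity neighbourhood'': in the example, $E(\cdot,\phi)$ and every frozen energy are globally $\alpha$-convex. The scheme is not the minimizing movement of a single functional. So the discrete EVI of step $n$ with comparison point $\rho^*$ contains the term $E_{\phi,\rho^n}(\rho^*)-E_{\phi,\rho^n}(\rho^{n+1})$. This term is positive in general, because $\rho^*$ minimises $E_{\phi,\rho^*}$, not $E_{\phi,\rho^n}$; that $\rho^*$ is a fixed point of its own step does not help.

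There are two further problems with that route. First, pointwise positivity of the Otto Hessian gives no $m$-convexity on a $\mathcal W$-ball. For $\int V\,\mathrm d\rho$ on $\mathbb R$, every such ball contains measures with mass $\varepsilon$ near any point, so the convexity constant on the ball is $\inf_{\mathbb R}V''$. For $V(y)=(y^2-1)^2$, the measures $(1-\varepsilon)\delta_1+\varepsilon\delta_{-1}$ are even fixed points at distance $2\sqrt\varepsilon$ from $\delta_1$, although $V''(1)=8$. Second, in $\mathcal P_2$ the comparison argument yields a factor like $(1+2m)^{-1/2}$, not the Hilbert resolvent factor $(1+m)^{-1}$.

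A correct version needs extra hypotheses. Suppose the frozen energies are $m$-convex along generalized geodesics near $\rho^*$, and some $\xi_\rho\in\partial E_{\phi,\rho}(\rho^*)$ satisfies $\|\xi_\rho\|\le K\,\mathcal W(\rho,\rho^*)$ with $K<m$. Set $a=\mathcal W(\rho^{n+1},\rho^*)$ and $b=\mathcal W(\rho^n,\rho^*)$. The discrete EVI together with $E_{\phi,\rho^n}(\rho^{n+1})\ge E_{\phi,\rho^n}(\rho^*)-Kab+\frac m2a^2$ gives $(1+2m)a^2\le b^2+2Kab$. Hence $a\le\frac{K+\sqrt{K^2+1+2m}}{1+2m}\,b$, and this factor is $<1$ exactly when $K<m$. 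In the example, $K=2\alpha>m=\alpha$. For a $\rho$-independent signal ($K=0$), your alternative route goes through with factor $(1+2m)^{-1/2}$, provided local convexity is assumed rather than derived.
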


\subsection{Interpretation and dynamical consequences}

\begin{remark}[Cognitive interpretation and model-based prediction]
  In stationary contexts, Hebbian plasticity can be interpreted as a geometric consolidation process, whereby the dynamics converge towards local minima of the energy $E(\cdot,\phi)$ and stabilise the associated internal geometry.

  Although the set of equilibrium configurations is independent of $\tau>0$, the linearisation $DT^E_{\phi,\tau}(\rho^*)=\exp(-\tau H_E(\rho^*,\phi))$ shows that $\tau$ controls the contraction rate along stable directions. This suggests a possible link between the timescale of Hebbian updates and the temporal organisation of cognitive processes.

  \textbf{Computational constraints and effective equilibrium selection.}
  The dynamics may effectively select different equilibria depending on the available time horizon and numerical stability. Very small $\tau$ may lead to slow convergence, potentially too slow to be achieved within a finite time window, while excessively large $\tau$ may induce oscillatory or unstable behavior around otherwise stable equilibria. Thus, $\tau$ controls which equilibria are effectively reachable and consolidated.

  If discrete updates are gated by oscillatory cycles, the model suggests that different gating frequencies modulate consolidation rates and the selection of computationally compatible equilibria, rather than the location of the equilibria themselves.

  \textbf{Nested updates.}
  Multi-scale updates involving several time steps $\tau$, such as $\tau_1 \ll \tau_2$, may increase the speed and robustness of convergence compared to the use of a single time step. The family $\{T^E_{\phi, \tau}\}_{\tau>0}$ can be viewed as a temporal decomposition of the Hebbian update operator, analogous to a spectral decomposition.
\end{remark}

\begin{remark}[Bifurcations and phase transitions]
  As the context $\phi$ varies, the structure of the energy landscape $E(\cdot,\phi)$ may undergo qualitative changes. Bifurcations occur when eigenvalues of the Wasserstein Hessian cross zero, leading to the loss of stability of equilibria and the emergence of new stationary states. These events can be interpreted as geometric phase transitions in the space of representations.

  While the location of these transitions is determined by the energy landscape, the time step $\tau$ controls their dynamical manifestation, with critical slowing down near bifurcation points and enhanced sensitivity to perturbations.
\end{remark}

\begin{remark}[Effective dimensionality of representations]
  The spectral structure of the Wasserstein Hessian $H_E(\rho^*,\phi)$ provides a natural notion of effective dimensionality of the representation around an equilibrium. Directions associated with large positive eigenvalues are rapidly contracted and effectively frozen, while directions corresponding to small or vanishing eigenvalues remain dynamically accessible. As a result, even when the ambient representation space is high-dimensional, as commonly encountered in deep learning models, the dynamics effectively evolves on a lower-dimensional manifold whose dimension depends on the local geometry of the energy landscape. Changes in this effective dimensionality naturally occur at bifurcation points, when eigenvalues cross zero.
\end{remark}

\clearpage
\bibliographystyle{plainnat}
\bibliography{refs}

@book{villani,
  author    = {C{\'e}dric Villani},
  title     = {Optimal Transport: Old and New},
  publisher = {Springer},
  year      = {2009},
  series    = {Grundlehren der mathematischen Wissenschaften},
  volume    = {338}
}

@book{santambrogio,
  author    = {Filippo Santambrogio},
  title     = {Optimal Transport for Applied Mathematicians},
  publisher = {Birkh{\"a}user},
  year      = {2015},
  series    = {Progress in Nonlinear Differential Equations and Their Applications},
  volume    = {87}
}

@book{ags,
  author    = {Luigi Ambrosio and Nicola Gigli and Giuseppe Savar{\'e}},
  title     = {Gradient Flows in Metric Spaces and in the Space of Probability Measures},
  publisher = {Birkh{\"a}user},
  year      = {2008}
}

@book{hebb,
  author    = {Donald O. Hebb},
  title     = {The Organization of Behavior},
  publisher = {Wiley},
  year      = {1949}
}

@article{hopfield,
  author  = {John J. Hopfield},
  title   = {Neural networks and physical systems with emergent collective computational abilities},
  journal = {Proceedings of the National Academy of Sciences},
  year    = {1982},
  volume  = {79},
  number  = {8},
  pages   = {2554--2558}
}

@book{dayanabbott,
  author    = {Dayan, Peter and Abbott, L. F.},
  title     = {Theoretical Neuroscience: Computational and Mathematical Modeling of Neural Systems},
  publisher = {MIT Press},
  year      = {2001}
}

@book{gerstner2014neuronal,
  author    = {Wulfram Gerstner and Werner M. Kistler and Richard Naud and Liam Paninski},
  title     = {Neuronal Dynamics: From Single Neurons to Networks and Models of Cognition},
  publisher = {Cambridge University Press},
  year      = {2014}
}

@article{jko1998,
  author    = {Jordan, Richard and Kinderlehrer, David and Otto, Felix},
  title     = {The variational formulation of the Fokker--Planck equation},
  journal   = {SIAM Journal on Mathematical Analysis},
  volume    = {29},
  number    = {1},
  pages     = {1--17},
  year      = {1998},
  publisher = {SIAM}
}

@article{Sturm2003I,
  author  = {Sturm, Karl-Theodor},
  title   = {Probability measures on metric spaces of nonpositive curvature},
  journal = {Heat Kernels and Analysis on Manifolds, Graphs, and Metric Spaces},
  year    = {2003},
  pages   = {357--390}
}

@incollection{Kuramoto1975,
  author={Yoshiki Kuramoto},
  title={Self-entrainment of a population of coupled non-linear oscillators},
  booktitle={International Symposium on Mathematical Problems in Theoretical Physics},
  editor={H. Araki},
  series={Lecture Notes in Physics},
  volume={39},
  pages={420--422},
  year={1975},
  publisher={Springer}
}

@book{Villani2003,
  author={Cédric Villani},
  title={Topics in Optimal Transportation},
  year={2003},
  publisher={AMS}
}

@article{McCann1997,
  author  = {McCann, Robert J.},
  title   = {A convexity principle for interacting gases},
  journal = {Advances in Mathematics},
  volume  = {128},
  number  = {1},
  pages   = {153--179},
  year    = {1997},
  doi     = {10.1006/aima.1997.1634}
}

@article{cuturi2013sinkhorn,
  title={Sinkhorn Distances: Lightspeed Computation of Optimal Transportation Distances},
  author={Marco Cuturi},
  year={2013},
  journal={NeurIPS},
}

@article{Arieli1996,
  title={Dynamics of ongoing activity: explanation of the large variability in evoked cortical responses},
  author={Amos Arieli  and Alexander Sterkin  and Amiram Grinvald  and Ad Aertsen},
  journal={Science},
  volume={273},
  number={5283},
  pages={1868--1871},
  year={1996}
}

@article{Churchland2010,
  title={Stimulus onset quenches neural variability: a widespread cortical phenomenon},
  author={Churchland, Mark M and Yu, Byron M and Cunningham, John P and Sugrue, Leo P and Cohen, Marlene R and Corrado, Greg S and Newsome, William T and Clark, Andrew M and Hosseini, Paymon and Scott, Benjamin B and others},
  journal={Nature Neuroscience},
  volume={13},
  number={3},
  pages={369--378},
  year={2010}
}

@article{Mante2013,
  title={Context-dependent computation by recurrent dynamics in prefrontal cortex},
  author={Mante, Valerio and Sussillo, David and Shenoy, Krishna V and Newsome, William T},
  journal={Nature},
  volume={503},
  number={7474},
  pages={78--84},
  year={2013},
  publisher={Nature Publishing Group}
}

@article{Rabinovich2008,
  title={Transient dynamics for neural processing},
  author={Rabinovich, Misha and Huerta, Ram{\'o}n and Laurent, Gilles},
  journal={Science},
  volume={321},
  number={5885},
  pages={48--50},
  year={2008},
  publisher={American Association for the Advancement of Science}
}

@article{Buonomano2009,
  title={State-dependent computations: spatiotemporal processing in cortical networks},
  author={Buonomano, Dean V and Maass, Wolfgang},
  journal={Nature Reviews Neuroscience},
  volume={10},
  number={2},
  pages={113--125},
  year={2009},
  publisher={Nature Publishing Group}
}

@article{lipman2023flow,
  title={Flow Matching for Generative Modeling},
  author={Yaron Lipman and Ricky T. Q. Chen and Heli Ben-Hamu and Maximilian Nickel and Matt Le},
  journal={ICLR},
  year={2023}
}

@article{gat2024discreteflowmatching,
  title={Discrete Flow Matching},
  author={Itai Gat and Tal Remez and Neta Shaul and Felix Kreuk and Ricky T. Q. Chen and Gabriel Synnaeve and Yossi Adi and Yaron Lipman},
  year={2024},
  journal={NeurIPS}
}

@article{liu2024sfm,
  title={SFM: Stochastic Flow Matching for Discrete and Categorical Data},
  author={Liu, Xuehai and others},
  journal={NeurIPS},
  year={2024}
}

@inproceedings{hoogeboom2021argmax,
  title={Argmax Flows and Multinomial Diffusion: Learning Categorical Distributions},
  author={Emiel Hoogeboom and Didrik Nielsen and Priyank Jaini and Patrick Forré and Max Welling},
  booktitle={NeurIPS},
  year={2021}
}

@book{BridsonHaefliger1999,
  author    = {Bridson, Martin R. and Haefliger, Andr{\'e}},
  title     = {Metric Spaces of Non-Positive Curvature},
  series    = {Grundlehren der mathematischen Wissenschaften},
  volume    = {319},
  publisher = {Springer},
  year      = {1999}
}

@article{LottVillani2009,
  author  = {Lott, John and Villani, C{\'e}dric},
  title   = {Ricci curvature for metric-measure spaces via optimal transport},
  journal = {Annals of Mathematics},
  volume  = {169},
  number  = {3},
  pages   = {903--991},
  year    = {2009}
}

@article{degroot1974reaching,
  title={Reaching a consensus},
  author={DeGroot, Morris H},
  journal={Journal of the American Statistical Association},
  volume={69},
  number={345},
  pages={118--121},
  year={1974}
}

@INPROCEEDINGS{jadbabaie2003coordination,
  author={Jadbabaie, A. and Lin, J. and Morse, A.S.},
  booktitle={Proceedings of the 41st IEEE Conference on Decision and Control, 2002.},
  title={Coordination of groups of mobile autonomous agents using nearest neighbor rules},
  year={2002},
  volume={3},
  number={},
  pages={2953-2958 vol.3},
  doi={10.1109/CDC.2002.1184304}
}

@article{ren2005consensus,
  title={Consensus seeking in multiagent systems under dynamically changing interaction topologies},
  author={Ren, Wei and Beard, Randal W},
  journal={IEEE Transactions on Automatic Control},
  volume={50},
  number={5},
  pages={655--661},
  year={2005}
}

@ARTICLE{olfati2004consensus,
  author={Olfati-Saber, R. and Murray, R.M.},
  journal={IEEE Transactions on Automatic Control},
  title={Consensus problems in networks of agents with switching topology and time-delays},
  year={2004},
  volume={49},
  number={9},
  pages={1520-1533},
  doi={10.1109/TAC.2004.834113}
}

@ARTICLE{olfati2007consensus,
  author={Olfati-Saber, Reza and Fax, J. Alex and Murray, Richard M.},
  journal={Proceedings of the IEEE},
  title={Consensus and Cooperation in Networked Multi-Agent Systems},
  year={2007},
  volume={95},
  number={1},
  pages={215-233},
  doi={10.1109/JPROC.2006.887293}
}

@book{buzsaki2006rhythms,
  title={Rhythms of the Brain},
  author={Buzsáki, György},
  year={2006},
  publisher={Oxford University Press}
}

@article{wang2010neurophysiological,
  title={Neurophysiological and computational principles of cortical rhythms in Cognition},
  author={Wang, Xiao-Jing},
  journal={Physiological Reviews},
  year={2010},
  volume={90},
  number={3},
  pages={1195--1268}
}

@article{lisman2013theta,
  title={The Theta-Gamma Neural Code},
  author={Lisman, John E. and Jensen, Ole},
  journal={Neuron},
  year={2013},
  volume={77},
  number={6},
  pages={1002--1016}
}

@article{lampl1999synchrony,
  title={Synchronous membrane potential fluctuations in neurons of the cat visual cortex},
  author={Lampl I, Reichova I, Ferster D.},
  journal={Neuron},
  volume={22},
  number={2},
  pages={361--374},
  year={1999}
}

@book{sutton2018reinforcement,
  author    = {Richard S. Sutton and Andrew G. Barto},
  title     = {Reinforcement Learning: An Introduction},
  edition   = {2},
  publisher = {MIT Press},
  year      = {2018}
}

\end{document}